\documentclass[a4paper,11pt]{article}
\usepackage{graphicx,amsmath,amsfonts,amssymb,amsthm,color,framed,dsfont,hyperref,accents, pgfplots}
\usepackage[T1]{fontenc} 
\usepackage[french,english]{babel}

\usepackage{geometry}
\pgfplotsset{width=10cm,compat=1.9}
\usetikzlibrary{external}
\tikzexternalize[prefix=tikz/]

\usepackage{physics}
\usepackage{bbm}
\usepackage[font={small,it}]{caption}



\newcommand{\R}{\mathbb{R}}

\newcommand{\Int}{\mathsf{Int}}
\newcommand{\Cl}{\mathsf{Cl}}


\newcommand{\Prob}{\mathbb{P}}

\newcommand{\E}{\mathbb{E}}

\newcommand{\1}{\mathds{1}} 
\newcommand{\Wass}{\mathbb{W}}
\newcommand{\Law}{\mathcal{L}}

\newcommand{\e}{\mathrm{e}}

\newcommand{\Id}{\mathrm{Id}}

\newcommand{\eps}{\varepsilon}
\newcommand{\ds}{\displaystyle}

\newcommand{\cPc}{\mathcal{P}}
\newcommand{\cDc}{\mathcal{D}}

\newcommand{\cFc}{\mathcal{F}}


\newcommand{\MV}{{^{\scriptscriptstyle \text{MV}}}} 

\newcommand{\diff}{\sigma} 
\newcommand{\Diff}{\mathsf{\Sigma}} 
\newcommand{\drift}{b} 
\newcommand{\Drift}{\mathsf{B}} 
\newcommand{\X}{\mathfrak{X}} 


\newtheorem{thm}{Theorem}[section]
\newtheorem{corollary}[thm]{Corollary}

\newtheorem{lemma}[thm]{Lemma}

\newtheorem{prop}[thm]{Proposition}
\newtheorem{assumption}[thm]{Assumption}

\newtheorem{definition}[thm]{Definition}

\newtheorem{rem}[thm]{Remark}
\numberwithin{equation}{section}




\title{Freidlin-Wentzell type exit-time estimates for time-inhomogeneous diffusions and their applications
}

\date{\today}

\author{A.~Aleksian and S.~Villeneuve
\thanks{This work was supported by the Air Force Office of Scientific Research, Air Force Material Command, USAF (No. FA9550-19-7026).}
\\[5pt]
\small{Toulouse School of Economics}
}

\begin{document}

\maketitle

\begin{abstract}

This paper investigates the exit-time problem for time-inhomogeneous diffusion processes. The focus is on the small-noise behavior of the exit time from a bounded positively invariant domain. We demonstrate that, when the drift and diffusion terms are uniformly close to some time-independent functions, the exit time grows exponentially both in probability and in $L_1$ as a parameter that controls the noise tends to zero. We also characterize the exit position of the time-inhomogeneous process. Additionally, we investigate the impact of relaxing the uniform closeness condition on the exit-time behavior. As an application, we extend these results to the McKean-Vlasov process. Our findings improve upon existing results in the literature for the exit-time problem for this class of processes.

\end{abstract}

{\bf Key words:}  Freidlin-Wentzell theory, time-inhomogeneous diffusion, McKean-\\Vlasov process, exit time\par

{\bf 2020 AMS subject classifications:} Primary: 60H10; Secondary: 60J60, 60K35 \par

\section{Introduction}
\label{s:Introduction}

The Freidlin-Wentzell theory emerged in the late 1960s, beginning with several works by the two authors (most notably \cite{VF70}), which were later compiled in their seminal book \cite{FW}. The problem originated from the study of small random perturbations in dynamical systems of the form  
\begin{equation} 
\label{eq:def_det_flow}  
    \dot{\phi}_t = \drift(\phi_t), \quad \phi_0 = x;  
\end{equation}  
where $\drift \in C_1(\R^d; \R^d)$ is a vector field defining the flow, and $x \in \R^d$ is the initial point.  

In particular, Freidlin-Wentzell theory focused on perturbations in the form of Brownian motion with a small diffusion parameter. Thus, they examined the family of processes $Z = Z(x, \eps)$, parameterized by the starting point $x \in \R^d$ and a parameter $\eps$ that controls the noise. Consider the following definition:  
\begin{equation}  
\label{eq:def_process_Z}  
        \dd{Z_t} = \sqrt{\eps} \diff(Z_t) \dd{W_t} + \drift(Z_t) \dd{t}, \quad Z_0 = x \text{ a.s.},  
\end{equation}  
where $\sigma \in C_1(\R^d; \R^{d \times d})$ is a matrix-valued function that introduces spatially dependent noise. In what follows, we will always assume that $\drift$ and $\diff$ are continuous Lipschitz functions. Under this condition, the existence and uniqueness results for \eqref{eq:def_process_Z} are standard. Additionally, every object will be defined on a sufficiently rich filtered probability space $(\Omega, \cFc, (\cFc_t)_{t \geq 0}, \Prob)$ and $W$ will be a $\cFc_t$-Brownian motion. By $(\cFc^W_t)_{t \geq 0}$, we denote the Brownian filtration associated to $W$.

Several problems were considered for \eqref{eq:def_process_Z} by M.~Freidlin and A.~Wentzell, among them, the question of the first exit time from a positively invariant for the flow \eqref{eq:def_det_flow} set $\cDc$. Let $\cDc \subset \R^d$ be some open, bounded domain. We recall the standard assumptions of the Freidlin-Wentzell theory.

\begin{assumption}
\label{assu:1}
    There exists a unique stable equilibrium point for the flow induced by $\drift$ inside $\cDc$, denoted by $a \in \cDc$. That is, $\phi_0 \in \cDc \Rightarrow  \{\phi_t\}_{t \geq 0} \subset \cDc$ and $\ds \lim_{t \to \infty} \phi_t = a$, where $\phi$ is defined in \eqref{eq:def_det_flow}. 
\end{assumption}

The first assumption states that, indeed, the set $\cDc$ is a positively invariant set and with a unique attractor inside it. In particular, it means that $\drift(a) = 0$. The second assumption states that the vector field $\drift$ points strictly inside the set $\cDc$ on its boundary $\partial \cDc$. 

\begin{assumption}
\label{assu:2}
    $\langle b(x), n(x) \rangle < 0$ for all $x \in \partial \cDc$, where $n(x)$ is the unit outward normal at $x \in \partial \cDc$.
\end{assumption}

The next assumption concerns the behavior of $\drift$ around the attractor $a$. 

\begin{assumption}
\label{assu:3}
    All eigenvalues of the Jacobian matrix  $(\frac{\partial \drift_i}{\partial x_j}(a))_{i, j}$ are negative.
\end{assumption}

In particular, Assumption \ref{assu:3} means that the field $\drift$ exhibits very strong attractive behavior around $a$ with an additional term. In particular, there exists $L > 0$ such that $\langle \drift(x), x - a \rangle \leq -L|x - a|^2$. The next assumption introduces some restrictions on the diffusion term. 

\begin{assumption}
\label{assu:4}
    There exists $\lambda > 0$ such that for any $x \in \cDc$, we have $ \frac{1}{\lambda} < \|\diff(x)\| < \lambda$. Moreover, for any $x \in \cDc$, $(\diff\diff^*) (x)$ is a non-singular matrix that is uniformly positive definite on $\partial \cDc \cup \{a\}$.
\end{assumption}


Note that our assumptions can be generalized, as can be seen, for example, in Assumptions (A-1) to (A-4) of {\cite[Chapter 5]{DZ}}. Additionally, see Exercise 5.7.29 in the same reference for an explanation of how the assumptions used in this paper are special cases of those presented in the book.

As was said before, the Freidlin-Wentzell theory addresses the exit-time problem, i.e. the study of $\tau(Z) := \inf\{t \geq 0: Z_t \notin \cDc\}$--- the first time when the process $Z$ leaves the domain $\cDc$. 

The last ingredient that we need to formulate the exit-time result of Freidlin-Wentzell theory is the notion of quasi-potential and its height within $\cDc$. For any $t > 0$, $z \in \R^d$, and $f \in C[0, t]$ define the following functional. Let:
\begin{equation*}
    I_t^z(f) := \int_0^t \left\langle \dot{f}_s - \drift(f_s), (\diff\diff^*)^{-1} (f_s) \big(\dot{f}_s - \drift(f_s) \big) \right\rangle \dd{s},
\end{equation*}
if $\dot{f}$ exists and belongs to $L_2$ and $I_T^z(f) = + \infty$ otherwise. $I_T^z$ is called the rate function and comes from the large deviation principle for stochastic processes of the form \eqref{eq:def_process_Z}. For more information on the large deviation theory see \cite{DZ}. 

Let us now define $\ds U(z, y, t) := \inf_{f \in C[0, t] : f_t = y} I^{z}_t(f)$ and $\ds U(z, y) := \inf_{t \geq 0} U(z, y, t)$. 

The following function $Q(y) := U(a, y)$ is called the quasi-potential. The name quasi-potential comes from the fact that if $\drift = - \nabla V$ for some function (potential) $V: \R^d \to \R$ and $\diff$ is simply an identity matrix, then $Q(y) = V(y) - V(a)$ for all $y$ inside $\cDc$. Note that, under our assumptions, the quasi-potential $Q$ is continuous on $\partial \cDc$ and at point $a$ (see e.g. {\cite[p.236 Exercise 5.7.29]{DZ}}).

Finally, denote as
\begin{equation}
\label{eq:def:H}
    H := \inf_{y \in \partial \cDc} Q(y)
\end{equation} 
the height of the quasi-potential within the domain $\cDc$.

Under the Assumptions above, we have the following classical result of the Freidlin-Wentzell theory.
\begin{prop}[M.I.~Freidlin, A.D.~Wentzell]
\label{prop:FW_theory}
    For any compact $K \subset \cDc$, we have:
    \begin{enumerate}
        \item for any $\eta > 0$: $\ds \lim_{\eps \to 0}\inf_{x \in K} \Prob \left(\exp{\frac{2(H - \eta)}{\eps}} \leq \tau(Z) \leq \exp{\frac{2(H + \eta)}{\eps}} \right) = 1$,
        \item $\ds \lim_{\eps \to 0} \frac{\eps}{2} \log \E[\tau(Z)] = H$ uniformly for $x \in K$,
        \item for any closed $N \subset \partial \cDc$ such that $\ds \inf_{y \in N} Q(y) > H$, we have
        \begin{equation*}
            \lim_{\eps \to 0} \sup_{x \in K} \Prob \left(Z_{\tau(Z)} \in N \right) = 0.
        \end{equation*}
    \end{enumerate}
\end{prop}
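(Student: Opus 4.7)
The plan is to derive all three assertions from the sample-path large deviation principle for $Z$ (which is the source of the rate function $I_t^z$ appearing above), combined with a strong Markov cycle decomposition around the attractor $a$. The core idea is that, by Assumptions \ref{assu:1}--\ref{assu:3}, trajectories of $Z$ are strongly attracted to $a$ in $O(1)$ time, so escaping $\cDc$ requires a rare excursion whose exponential cost is exactly $2H$ by the definition of the quasi-potential $Q$. Everything then reduces to counting independent ``attempts''.

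For the probability bounds in item 1, I would fix two concentric neighbourhoods $\overline{B(a,\rho_1)} \subset B(a,\rho_2) \subset \cDc$ and define cycle times $0 = \theta_0 \leq \tau_1 \leq \theta_1 \leq \tau_2 \leq \ldots$, where $\theta_i$ is the hitting time of $\overline{B(a,\rho_1)}$ after $\tau_i$ and $\tau_{i+1}$ is the first exit of $B(a,\rho_2)$ after $\theta_i$, stopping at $\tau(Z)$. Using the LDP, the per-cycle probability $p_\eps$ that the segment between $\theta_{i-1}$ and $\tau_i$ hits $\partial \cDc$ rather than returning to $\overline{B(a,\rho_1)}$ satisfies, for $\rho_1,\rho_2$ small and $\eps$ small,
\begin{equation*}
    \exp\!\Big(-\tfrac{2(H+\eta/2)}{\eps}\Big) \;\leq\; p_\eps \;\leq\; \exp\!\Big(-\tfrac{2(H-\eta/2)}{\eps}\Big);
\end{equation*}
the lower bound uses a near-optimal quasi-potential path from $a$ to $\partial \cDc$, the upper bound uses continuity of $Q$ and compactness of $\partial \cDc$. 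Assumption \ref{assu:3} gives uniformly bounded expected cycle length, and Assumption \ref{assu:2} prevents sticking near $\partial \cDc$ without exit. A geometric dominance argument then yields $\tau(Z)$ between $\exp(2(H-\eta)/\eps)$ and $\exp(2(H+\eta)/\eps)$ with probability tending to $1$, uniformly in $x \in K$ after an initial $O(1)$ hitting time of $\overline{B(a,\rho_1)}$.

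For item 3, I would observe that within one cycle the probability of exiting through a closed $N \subset \partial \cDc$ with $\inf_N Q \geq H + 3\delta$ is at most $\exp(-2(H+2\delta)/\eps)$ by the LDP, while the total per-cycle exit probability is at least $\exp(-2(H+\delta)/\eps)$ from item 1. Taking the ratio, conditionally on eventual exit, yields $\sup_{x\in K}\Prob(Z_{\tau(Z)}\in N) \leq \exp(-2\delta/\eps) \to 0$. Item 2 follows from item 1 by standard squeezing: the upper bound on $\E[\tau(Z)]$ comes from the geometric tail given by the cycle decomposition, and the lower bound from $\E[\tau(Z)]\geq \exp(2(H-\eta)/\eps)\,\Prob(\tau(Z)\geq \exp(2(H-\eta)/\eps))$.

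The main obstacle is uniformity. Every LDP estimate above must hold uniformly in the starting point over $K$ and over all cycles visited before exit, and the parameters $\rho_1,\rho_2,\delta$ must be fixed independently of $\eps$ while the exponential asymptotics remain valid. This is precisely what Assumptions \ref{assu:2}--\ref{assu:4} are engineered to give: transversality on $\partial \cDc$, strong linearised attraction at $a$, and non-degeneracy of $\diff\diff^*$ ensure both that $Q$ is continuous at $a$ and on $\partial \cDc$ and that the cycle return times are tight uniformly in $\eps$. Once this uniformity is established, the rest of the argument is a clean application of the strong Markov property.
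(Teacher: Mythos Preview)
Your proposal is correct and is precisely the classical Freidlin--Wentzell argument as found in \cite{FW} and \cite[Chapter~5.7]{DZ}. Note that the paper does not actually prove Proposition~\ref{prop:FW_theory} itself---it is quoted as a known result---but the paper's proof of the more general Theorem~\ref{thm:main_result} follows exactly the same strategy you describe: a strong Markov cycle decomposition using the stopping times $\theta_i$ between $B_\rho(a)$ and $S_{2\rho}(a)$, per-cycle LDP estimates for the escape probability (Lemmas~\ref{lm:P_tau_D_lower_bound}--\ref{lm:X_small_dev_from_x}), and a geometric tail argument for the upper bound on $\E[\tau]$.
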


This result shows that the exit time of the time-homogeneous Itô diffusion~\eqref{eq:def_process_Z} grows exponentially as $\eps \to 0$, both in probability and in $L_1$. Furthermore, the exit-position result indicates that, with high probability, the exit occurs near the subset of $\partial \cDc$ that minimizes the quasipotential $Q$. 

This classical result from Freidlin-Wentzell theory has since been refined in several ways. First, one may consider $\cDc$ as a characteristic domain for the flow, meaning it can include saddle points of $\drift$---something that is precluded by Assumption~\ref{assu:2} in our case. This extension was achieved, for example, in~\cite{Day90} by introducing a reflection on the boundary for~\eqref{eq:def_process_Z} and applying techniques similar to those in Freidlin and Wentzell theory to this modified process. 

Another question of significant interest in the field, due to its relevance for applications, is the derivation of sharper asymptotics or prefactors. Most results in this direction have been obtained for the reversible case (i.e., when $\drift$ is the gradient of some function, as in the Langevin equation) and without the diffusion matrix $\diff$. This focus arises because PDE techniques are used in these cases, rather than the large deviations principles employed in the Freidlin and Wentzell approach. The exit-time result in this context is known as the Eyring-Kramers formula:
\begin{equation*}
    \E[\tau(Z)] = C(1 + o_\eps(1)) \exp{\frac{2H}{\eps}}.
\end{equation*}
For the exact form of prefactors in the Eyring-Kramers formula and their derivation using a potential-theoretic approach, see~\cite{Bovier1, Bovier2} and references therein. See also~\cite{lelievre2022preprint} for recent advancements in this area. For results on exit times with prefactors in the nonreversible case (though still considering specific forms of $\drift$), see~\cite{Bouchet2016}. Proving the Eyring-Kramers formula under the general assumptions of Freidlin and Wentzell remains an open problem.

In this paper, we study the small-noise asymptotics of the exit time for a family of processes $Y = Y(\eps, x)$, parametrized by the noise parameter $\eps$, each represented in the form:
\begin{equation}
\label{eq:def:inhomogeneous_diff}
    \dd{Y_t} = \sqrt{\eps} \, \Diff(t, Y_t) \dd{W_t} + \Drift(t, Y_t) \dd{t}, \quad Y_0 = x \text{ a.s.},  
\end{equation}
where $\Diff$ and $\Drift$ may vary with $\eps$. Our goal is to identify conditions under which Freidlin-Wentzell-type exit-time estimates remain valid for such families of processes and to explore the implications of these conditions for specific choices of $\Diff$ and $\Drift$.

There are a lot of processes of this form. For example, McKean-Vlasov process, which is defined as a solution of the following stochastic differential equation
\begin{equation*}
    \dd{Y_t}  = \sqrt{\eps} \diff^\MV(Y_t, \mu_t) \dd{W_t} + \drift^\MV(Y_t, \mu_t)\dd{t},
\end{equation*}
where $\mu_t = \Law(Y_t)$ represents the law of the process $Y$. The exit time problem for this process was considered in several papers \cite{HIP2, EJP, AT24} in the case $\drift^\MV(x, \mu) = - \nabla V(x) - \int \nabla F(x - z) \mu(\dd{z})$, for some regular functions $V, F: \R^d \to \R$, also known as confinement and interaction potentials and $\diff^\MV$ being an identity matrix.

Another process of interest that can be represented in the form \eqref{eq:def:inhomogeneous_diff} is
\begin{equation*}
    \dd{Y_t} = \sqrt{\eps} \alpha(\rho_t(Y_t))\dd{W_t} - \nabla V(Y_t)\dd{t},
\end{equation*}
where, as before, $V: \R^d \to \R$ is some regular confinement potential, $\rho_t$ is the density of $Y_t$ and $\alpha: \R \to \R$ is some function that controls the the diffusion term (see \cite{BMV24swarm}).

\subsection{Main Results}

In the following, we will describe the precise assumptions on the process \eqref{eq:def:inhomogeneous_diff} under which the Freidlin-Wentzell type exit-time result is shown. We will only consider the processes such that their diffusion and drift terms a close enough to some functions that are independent of the time parameter, at least for some time. Define the following set of processes. 

\begin{definition}
    For any $\eps > 0$, $\kappa \geq 0$, and $x \in \R^d$, define the set $\X = \X(\eps, \kappa, x)$ \label{def:the_set_X} of all $\cFc^W_t$-adapted processes that are strong solutions of the following stochastic differential equation
    \begin{equation}
    \label{eq:def_main_process}
    \begin{cases}
        \dd{X_t} \!\!\! &= \sqrt{\eps} \, \Diff(t, X_t) \dd{W_t} + \Drift(t, X_t) \dd{t}, \\
        X_0 &= x \in  \R^d \text{ a.s.,}
    \end{cases}    
    \end{equation}
    for some $\Diff \in C_1(\R_+ \times \R^d; \R^{d \times d})$ and $\Drift \in C_1(\R_+ \times \R^d; \R^d)$ such that
    \begin{equation}
    \label{eq:def_X_eps_rho_x_K}
        \begin{aligned}
            &\sup_{t \geq 0} \sup_{y \in \R^d} \|\Diff(t, y) - \diff(y)\| \!\!\! &\leq \kappa, \\
            &\sup_{t \geq 0} \sup_{y \in \R^d} |\Drift(t, y) - \drift(y)| &\leq \kappa.
        \end{aligned}
    \end{equation}
\end{definition}

In other words, $\X$ is the set of stochastic processes such that they can be represented as solutions to the stochastic differential equation~\eqref{eq:def_main_process} with drift and diffusion terms that are close to $\diff$ and $\drift$ from Equation~\eqref{eq:def_process_Z}. Note that the set $\X$ is not empty for any $\eps, \kappa > 0$ and for any $x \in \R^d$, since the process $Z$, defined in \eqref{eq:def_process_Z} for the corresponding $\eps$, always belongs to $\X$. In what follows, we omit the parameters $(\eps, \kappa, x)$ when their meaning is clear from the context.

Let us consider the three parameters $\eps$, $\kappa$, and $x$ that are present in the definition of $\X(\eps, \kappa, x)$. The third parameter $x$ defines the starting point for all the processes $X$ that we consider. The parameter $\eps$ controls the noise of the processes that we consider. When $\eps$ is small, for fixed-independently-of-$\eps$ periods of time, the trajectory of $X$ tends to stay around the deterministic flow defined by $\dot{\phi}_t = \Drift(t, \phi_t)$, which could be shown using Itô calculus or more accurate exponential estimates provided by the large deviation principle (see e.g. \cite{DRST19}). The parameter $\kappa > 0$ controls how close the drift term $\Drift$ and the diffusion term $\Diff$ of the processes that we consider are to those of $Z$. Note that for $\kappa_1 < \kappa_2$, we have $\X(\eps, \kappa_1, x) \subset \X(\eps, \kappa_2, x)$. Moreover, the set $\X(\eps, 0, x)$ is also well-defined and consists of only one process, that is $Z$, the strong solution of \eqref{eq:def_process_Z} with corresponding $\eps$ and $x$. 

Any process $X$ that is a strong solution of \eqref{eq:def_main_process} is also a time-inhomogeneous strong Markov process. This property is thus preserved by the set $\X$, which we point out in the following remark. 

\begin{rem}
\label{rem:X_Markov_property}
    Take some $X \in \X(\eps, \kappa, x)$ and a stopping time $\tau$. For $t_0 > 0$ and $x^\prime \in \R^d$ define $X^\prime$ the strong solution of the SDE
    \begin{equation*}
    \begin{cases}
        \dd{X_t^\prime} \!\!\! &= \sqrt{\eps} \, \Diff(t_0 + t, X_t^\prime) \dd{W_t} + \Drift(t_0 + t, X_t^\prime) \dd{t}, \\
        X_0^\prime &= x^\prime \text{ a.s.}
    \end{cases}    
    \end{equation*}
    The functions $\Diff(t_0 + \cdot, \cdot)$ and $\Drift(t_0 + \cdot, \cdot)$ satisfy conditions \eqref{eq:def_X_eps_rho_x_K} and thus $X^\prime \in \X(\eps, \kappa, x^\prime)$. Moreover, by the strong Markov property of $X$, for any measurable function $f$, we have $\E [f(X^{\prime}_t)] = \E [f(X_{\tau + t})| X_{\tau} = x^\prime, \tau = t_0]$  for any $t \geq 0$.
\end{rem}

Finally, for a continuous $\cFc_t$-adapted process $X$, let us define the random time
\begin{equation}
\label{def:time_tau}
    \tau(X) := \inf\{t \geq 0: X_t \notin \cDc\}.
\end{equation}
Note that, under our assumptions, $\tau(X)$ is a stopping time (see, e.g. \cite[p.11]{karatzas2014brownian}). Now, we are ready to state the main result of the paper.

\begin{thm}
\label{thm:main_result}
    Fix some compact $K \subset \cDc$.  For any $\eta > 0$ there exists $\kappa > 0$ such that
    \begin{enumerate}
        \item $\ds \lim_{\eps \to 0} \inf_{x \in K}\inf_{X \in \X} \Prob \left(\exp{\frac{2(H - \eta)}{\eps}} \leq \tau(X) \leq \exp{\frac{2(H + \eta)}{\eps}} \right) = 1,$

        \item $\ds H - \eta \leq \lim_{\eps \to 0} \frac{\eps}{2} \log \inf_{x \in K} \inf_{X \in \X}\E [\tau(X)] \leq  \lim_{\eps \to 0} \frac{\eps}{2} \log \sup_{x \in K}\sup_{X \in \X}\E [\tau(X)] \leq H + \eta$,
       \item moreover, for any closed $N \subset \partial \cDc$ such that $\ds \inf_{y \in N} Q(y) > H + \eta$, we have
        \begin{equation*}
            \lim_{\eps \to 0} \sup_{x \in K} \sup_{X \in \X} \Prob \left(X_{\tau(X)} \in N \right) = 0.
        \end{equation*}
    \end{enumerate}
\end{thm}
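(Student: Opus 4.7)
The plan is to adapt the classical Freidlin--Wentzell proof, recorded as Proposition~\ref{prop:FW_theory}, to the family $\X$, leveraging two observations: first, the family $\X$ is stable under time-shifts at stopping times (Remark~\ref{rem:X_Markov_property}), which compensates for the loss of time-homogeneity; second, when $\kappa$ is small, the quantities entering the FW proof (the rate function, the quasi-potential, the attraction to $a$, the Lipschitz bounds) deviate from their $\kappa=0$ values by amounts that can be absorbed into $\eta$. The analytic step to establish first is a uniform sample-path large deviation estimate over $\X$: for every closed $F\subset C([0,T];\R^d)$ and $\delta>0$, there is $\kappa_0>0$ such that, for $\kappa<\kappa_0$,
\begin{equation*}
\limsup_{\eps\to 0}\tfrac{\eps}{2}\log\sup_{X\in\X}\Prob\bigl(X_{[0,T]}\in F\bigr)\le -\inf_{f\in F}I^{x}_T(f)+\delta,
\end{equation*}
together with the matching lower bound on open sets. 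This would be obtained by writing $X-Z$ as the sum of a drift term of size $O(\kappa T)$ and a martingale of quadratic variation $O(\kappa^2\eps T)$, applying Gronwall together with exponential-martingale estimates to control $\sup_{[0,T]}|X-Z|$, and finally invoking the classical LDP for $Z$ recorded in \cite{DZ}.

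\textbf{Upper bound on $\tau$ and exit-position control.} Fix $\eta>0$. By continuity of $Q$ on $\partial\cDc$, pick a neighborhood $N_\eta\subset\partial\cDc$ of a minimizer of $Q$ on which $Q<H+\eta/2$, a time $T_1$, and for each $y\in\bar B(a,\rho)$ a path $\phi^y$ joining $y$ to $N_\eta$ in time $T_1$ with $I^{y}_{T_1}(\phi^y)\le H+\eta/2$; the uniform LDP lower bound then yields
\begin{equation*}
\inf_{X\in\X}\inf_{y\in\bar B(a,\rho)}\Prob\bigl(\tau(X)\le T_1,\ X_{\tau(X)}\in N_\eta \,\big|\, X_0=y\bigr)\ge p_\eps:=\exp\bigl(-2(H+\tfrac{3\eta}{4})/\eps\bigr).
\end{equation*}
Assumption~\ref{assu:3}, combined with an It\^o--Gronwall estimate on $|X-a|^2$, gives a time $T_2$ independent of $\eps$ and $\kappa$ such that, uniformly in $\X$ and $x\in K$, $X$ reaches $\bar B(a,\rho)$ by time $T_2$ with probability at least $1/2$, provided $\kappa$ is small compared with the spectral gap of the Jacobian of $\drift$ at $a$. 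Iterating with Remark~\ref{rem:X_Markov_property} at the successive return times, $\tau(X)$ is stochastically dominated by $(T_1+T_2)$ times a geometric random variable of parameter $p_\eps/2$, giving the upper halves of Items~1 and~2. Item~3 follows because exit through a given closed $N$ with $\inf_N Q>H+\eta$ requires a path of $I$-cost at least $2(H+\eta)-\delta$, whose realization in each window has probability at most $\exp(-2(H+\eta/2)/\eps)$, strictly smaller than the probability $p_\eps$ of exiting through $N_\eta$.

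\textbf{Lower bound on $\tau$.} For the complementary estimate one uses the uniform upper LDP bound: for $T_0$ large enough,
\begin{equation*}
\sup_{y\in\cDc}\sup_{X\in\X}\Prob\bigl(\tau(X)\le T_0 \,\big|\, X_0=y\bigr)\le \exp\bigl(-2(H-\tfrac{\eta}{2})/\eps\bigr),
\end{equation*}
because any exiting trajectory has $I^{y}_{T_0}$-cost at least $H$ up to an error absorbed by $\kappa$. Partitioning $[0,\exp(2(H-\eta)/\eps)]$ into blocks of length $T_0$ and applying Remark~\ref{rem:X_Markov_property} at the endpoints, a union bound gives
\begin{equation*}
\sup_{x\in K}\sup_{X\in\X}\Prob\bigl(\tau(X)\le \exp(2(H-\eta)/\eps)\bigr)\le T_0^{-1}\exp(-\eta/\eps)\to 0.
\end{equation*}
Together with the $L_1$ analogue obtained from $\E[\tau]\ge \exp(2(H-\eta)/\eps)\Prob(\tau>\exp(2(H-\eta)/\eps))$, this completes Items~1 and~2.

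\textbf{Main obstacle.} The delicate step is establishing the uniform LDP with sharp enough constants over $\X$: producing, under the law of an \emph{arbitrary} $X\in\X$, a trajectory that nearly realizes the infimum of $I^{x}_T$ requires the perturbation of the action by $\Drift-\drift$ and $\Diff-\diff$ to inflate the cost along the chosen path by at most $\eta/4$. Making this quantitative, uniformly over $\X$ and over $x\in K$, drives the order of quantifiers ``for each $\eta$ there exists $\kappa$'' appearing in the theorem. A secondary subtlety is that Assumption~\ref{assu:3} must be used in quantitative form, so that the contraction rate of $\Drift$ near $a$ survives the $\kappa$-perturbation; this forces $\kappa$ to be chosen smaller than a constant multiple of the smallest absolute eigenvalue of the Jacobian of $\drift$ at $a$.
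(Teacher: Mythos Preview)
Your lower-bound argument contains a genuine gap. The claim
\[
\sup_{y\in\cDc}\sup_{X\in\X}\Prob\bigl(\tau(X)\le T_0 \mid X_0=y\bigr)\le \exp\bigl(-2(H-\tfrac{\eta}{2})/\eps\bigr)
\]
is false: if $y$ is close to $\partial\cDc$, the quasipotential cost $U(y,\partial\cDc)$ is arbitrarily small, so the exit probability from such $y$ is \emph{not} exponentially small in $\eps$. The height $H$ is the cost from the attractor $a$, not from an arbitrary point of $\cDc$. Your block partition then fails because, after one block of length $T_0$, the process may sit near the boundary, and the Markov iteration via Remark~\ref{rem:X_Markov_property} does not recover an exponentially small per-block exit probability.

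This is exactly why the paper (and the classical Freidlin--Wentzell proof) does \emph{not} partition time into equal blocks but instead uses an excursion decomposition: one introduces stopping times $\theta_{2i-1}$ (first hit of $B_\rho(a)\cup\partial\cDc$ after $\theta_{2i-2}$) and $\theta_{2i}$ (first return to $S_{2\rho}(a)$), and controls separately (i) the probability that the first excursion from $K$ exits $\cDc$ before reaching $B_\rho(a)$ (which tends to $0$ but not exponentially), (ii) the probability that a later excursion, starting on $S_{2\rho}(a)$, exits before returning to $B_\rho(a)$ (this one \emph{is} bounded by $\exp(-2(H-\eta/2)/\eps)$, because the starting point is now near $a$), and (iii) a lower bound on the duration $\theta_{2i}-\theta_{2i-1}$ of each excursion. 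Combining these three ingredients gives the correct lower bound on $\tau(X)$. Your argument skips this excursion structure and cannot be repaired without it.

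A secondary remark: the paper does not establish a uniform LDP over $\X$. It works only with the classical LDP for the homogeneous process $Z$, and transfers every estimate to $X$ via an exponential bound on $\sup_{t\le T}|X_t-Z_t|$ (their Lemma~\ref{lm:|X_t-Z_t|>delta}, essentially the Gronwall/exponential-martingale step you sketch). This is technically simpler than proving a full uniform LDP, and avoids the ``main obstacle'' you identify: one never needs to produce near-optimal trajectories under the law of an arbitrary $X\in\X$, only under the law of $Z$.
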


The result above shows that the exponential growth of the exit time, both in probability and in $L_1$, also holds for a family of inhomogeneous diffusions as long as their drift and diffusion terms are close to those of \eqref{eq:def_process_Z}. The parameter $\eta$ determines the desired precision of the exponential growth rate. The parameter $\kappa$, which controls the closeness of the drift terms $\Drift$ and $\drift$, as well as the diffusion terms $\Diff$ and $\diff$, is then chosen as a function of $\eta$. Note that the choice of $\kappa$ may also depend on other objects fixed by our assumptions, such as the functions $\drift$, $\diff$, and the domain $\cDc$. In principle, an explicit formula describing the dependency of $\kappa$ on $\eta$ can be obtained, but it requires more precise estimates than those provided in this paper.

The inverse is also true. Namely, if the process $Y(\eps, x)$ is given by \eqref{eq:def:inhomogeneous_diff} with some $\Drift$ and $\Diff$ such that there exist $\drift$, $\diff$, $a$, and $\cDc$ that satisfy Assumptions~\ref{assu:1}--\ref{assu:4} and such that $Y(\eps, x) \in \X(\eps, \kappa, x)$, for any $\eps$ and for {\it small enough} $\kappa > 0$, then there exists $\eta$ such that the exit time $\tau(Y)$ exhibits the exponential growth of the form of Theorem~\ref{thm:main_result}. Therefore, the Freidlin-Wentzell type exit time estimate holds even in the case when $\Drift$ and $\Diff$ does not tend to $\drift$ and $\diff$ with $\eps \to 0$. Although, the fact that $\kappa$ shall be "small enough" is essential. Its exact value can be found using more accurate estimates on the dependency of $\kappa$ on $\eta$ in Theorem~\ref{thm:main_result}.

If in fact the drift $\Drift$ and diffusion $\Diff$ tend to some independent-of-time limits $\drift$ and $\diff$ as $\eps \to 0$, then Theorem~\ref{thm:main_result} boils down to an equivalent to the Freidlin-Wentzell theory result. Consider the following simple corollary.

\begin{corollary}
\label{cor:main_result_uniform}
Fix some compact $K \subset \cDc$. Let $Y(\eps, x)$ be a family of $\cFc_t$-adapted processes such that for any $\kappa > 0$ and for any $\eps > 0$ small enough we have
\begin{equation}
\label{eq:cor_main_condition}
    Y(\eps, x) \in \X(\eps, \kappa, x) \text{, for any } x \in K.
\end{equation}
Then we have the following three results: 
\begin{enumerate}
    \item for any $\delta > 0$: $\ds \lim_{\eps \to 0}\inf_{x \in K} \Prob \left(\exp{\frac{2(H - \delta)}{\eps}} \leq \tau(Y) \leq \exp{\frac{2(H + \delta)}{\eps}} \right) = 1$,
    \item $\ds \lim_{\eps \to 0} \frac{\eps}{2} \log \E[\tau(Y)] = H$ uniformly for $x \in K$,
    \item moreover, for any closed $N \subset \partial \cDc$ such that $\ds \inf_{y \in N} Q(y) > H$, we have
    \begin{equation*}
        \lim_{\eps \to 0} \sup_{x \in K} \Prob \left(Y_{\tau(Y)} \in N \right) = 0.
    \end{equation*}
\end{enumerate}
\end{corollary}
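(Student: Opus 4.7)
The plan is to derive the corollary directly from Theorem~\ref{thm:main_result} via a diagonal-type argument. The key observation is that the hypothesis \eqref{eq:cor_main_condition} is strictly stronger than what Theorem~\ref{thm:main_result} needs: it allows $\kappa$ to be chosen \emph{after} the precision parameter $\eta$, rather than fixing one specific $\kappa$ once and for all. The three parts of the corollary then follow from the three parts of Theorem~\ref{thm:main_result} by the same template.

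For part (1), I would fix $\delta>0$ and apply Theorem~\ref{thm:main_result} with $\eta=\delta$ to obtain some $\kappa_\delta>0$ for which the uniform probabilistic estimate holds across $\X(\eps,\kappa_\delta,x)$. By \eqref{eq:cor_main_condition} applied to this $\kappa_\delta$, one has $Y(\eps,x)\in\X(\eps,\kappa_\delta,x)$ for every $x\in K$ and every $\eps$ small enough. Since $Y(\eps,x)$ is then one of the processes over which the infimum in Theorem~\ref{thm:main_result}(1) is taken, that lower bound is automatically a lower bound on $\Prob(\exp(2(H-\delta)/\eps)\leq \tau(Y)\leq \exp(2(H+\delta)/\eps))$, and letting $\eps\to 0$ yields the claim.

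For part (2), I would fix $\eta>0$, use Theorem~\ref{thm:main_result}(2) to produce an associated $\kappa_\eta>0$, and then use \eqref{eq:cor_main_condition} to place $Y(\eps,x)$ in $\X(\eps,\kappa_\eta,x)$ uniformly in $x\in K$ for all $\eps$ small. Theorem~\ref{thm:main_result}(2) then gives
\[
H-\eta \;\leq\; \liminf_{\eps\to 0}\frac{\eps}{2}\log \E[\tau(Y)] \;\leq\; \limsup_{\eps\to 0}\frac{\eps}{2}\log \E[\tau(Y)] \;\leq\; H+\eta,
\]
uniformly for $x\in K$; since $\eta>0$ is arbitrary, the limit equals $H$ uniformly on $K$. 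For part (3), given closed $N\subset \partial\cDc$ with $\inf_{y\in N} Q(y)>H$, I would pick $\eta>0$ small enough that $\inf_{y\in N} Q(y) > H+\eta$, take the corresponding $\kappa_\eta$ supplied by Theorem~\ref{thm:main_result}(3), and once more invoke \eqref{eq:cor_main_condition} to conclude that $Y(\eps,x)$ lies in $\X(\eps,\kappa_\eta,x)$ for all small $\eps$ and all $x\in K$, whence the convergence follows.

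There is no substantive obstacle: all of the analytic content is already encapsulated in Theorem~\ref{thm:main_result}. The only point worth emphasising is that the quantifier order in \eqref{eq:cor_main_condition} --- namely ``for every $\kappa>0$, for $\eps$ small enough'' --- is exactly what makes the diagonal choice ``fix $\eta$, then pick $\kappa_\eta$, then let $\eps\to 0$, then let $\eta\to 0$'' legitimate. Were $Y$ only known to lie in $\X(\eps,\kappa_0,x)$ for some single $\kappa_0$, the inner $\eta$ in Theorem~\ref{thm:main_result} could not be driven to zero and the sharpened conclusion of the corollary would be out of reach.
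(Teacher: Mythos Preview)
Your proposal is correct and follows essentially the same approach as the paper: fix the precision parameter ($\delta$ or $\eta$), invoke Theorem~\ref{thm:main_result} to obtain the corresponding $\kappa$, use \eqref{eq:cor_main_condition} to place $Y(\eps,x)$ inside $\X(\eps,\kappa,x)$ for all small $\eps$, and then let the precision parameter go to zero. Your treatment of part~(2) via $\liminf$/$\limsup$ is in fact slightly cleaner than the paper's phrasing, but the substance is identical.
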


In this paper, the term 'small enough' indicates that the upper bound of a parameter depends on the parameters specified earlier. For instance, in the case above, the equivalent formulation would be: `` $<$...$>$ for any $\kappa > 0$ there exists $\eps(\kappa) > 0$ such that for any positive $\eps < \eps(\kappa)$ we have $<$...$>$''.

The first result of the corollary can be achieved by simply taking for any $\delta > 0$ the parameter $\kappa$ given by Theorem~\ref{thm:main_result}. Then, we observe that, due to our condition on $Y$, regardless of the value of $\kappa$, in the limit $\eps \to 0$, we have $Y(\eps, x) \in \X(\eps, \kappa, x)$, thus giving
\begin{equation*}
\begin{aligned}
    &\lim_{\eps \to 0} \Prob \left(\exp{\frac{2(H - \delta)}{\eps}} \leq \tau(Y) \leq \exp{\frac{2(H + \delta)}{\eps}} \right) \\
    &\geq \lim_{\eps \to 0} \inf_{X \in \X} \Prob \left(\exp{\frac{2(H - \delta)}{\eps}} \leq \tau(X) \leq \exp{\frac{2(H + \delta)}{\eps}} \right) = 1.
\end{aligned}
\end{equation*}
This estimate, of course, holds uniformly in $x \in K$. The same holds for the exit position:
\begin{equation*}
    \lim_{\eps \to 0} \sup_{x \in K} \Prob \left(Y_{\tau(Y)} \in N \right) \leq \lim_{\eps \to 0} \sup_{x \in K} \sup_{X \in \X} \Prob \left(X_{\tau(X)} \in N \right) = 0. 
\end{equation*}

For the $L_1$-asymptotics, one can proceed similarly and note that, if $\ds \lim_{\eps \to 0} \frac{\eps}{2} \log \E[\tau(Y)]$ exists, then it should satisfy both
\begin{equation*}
   \lim_{\eps \to 0} \frac{\eps}{2} \log \E[\tau(Y)] \geq \lim_{\eps \to 0} \frac{\eps}{2} \log \inf_{X \in \X}\E [\tau(X)] \geq H - \delta,  
\end{equation*}
and 
\begin{equation*}
   \lim_{\eps \to 0} \frac{\eps}{2} \log \E[\tau(Y)] \leq \lim_{\eps \to 0} \frac{\eps}{2} \log \sup_{X \in \X}\E [\tau(X)] \leq H + \delta.  
\end{equation*}
These two bounds on the limit $\ds \lim_{\eps \to 0} \frac{\eps}{2} \log \E[\tau(Y)]$, which does not depend on $\kappa$ or $\delta$, hold for any $\delta > 0$. Moreover, it holds uniformly on $x \in K$, thus giving us the second result of the corollary.

One may argue that the conditions used for $Y$ in the corollary above are too strong. Indeed, we suppose that, in the small-noise regime, the processes $Y(\eps, x)$ lose their time inhomogeneous properties for all $t \geq 0$. This, in general, is not true for many processes of interest, as will be seen below for the McKean-Vlasov process. In the following remark, we note one obvious way to relax the condition \eqref{eq:cor_main_condition}. 

\begin{rem}
The result of Corollary~\ref{cor:main_result_uniform} still holds if we change the condition \eqref{eq:cor_main_condition} to the following. For any $\kappa > 0$, for any $\eps > 0$ small enough, and for any $x \in K$ there exists a process $X \in \X(\eps, \kappa, x)$ that is indistinguishable from $Y$ until time $\tau(X)$. Specifically,
\begin{equation}
\label{eq:cor_rabdom_time_condition}
    \Prob (\forall t \leq \tau(X): Y_t = X_t) = 1.
\end{equation}    
\end{rem}

The proof of the Freidlin-Wentzell exit time result in this case is obvious; one needs to slightly modify the argument above using the fact that, though $Y$ may not belong to $\X$, there exists $X \in \X(\eps, \kappa, x)$ such that $\tau(X) = \tau(Y)$ almost sure. 

Another possible way to relax the condition \eqref{eq:cor_main_condition} is to consider processes $Y(\eps, x)$ that belong to $\X(\eps, \kappa, x)$ only until some deterministic time that is big enough for the exit event to occur. Consider the following corollary.

\begin{corollary}
\label{cor:Kramers_type_law}
    Fix some compact $K \subset \cDc$. Let $Y(\eps, x)$ be a family of $\cFc_t$-adapted processes. Suppose that there exists $\delta^\prime > 0$ such that for any $\kappa > 0$, for any $\eps > 0$ small enough, and for any $x \in K$ there exists a process $X \in \X(\eps, \kappa, x)$ that is indistinguishable from $Y$ until time $\exp{\frac{2(H + \delta^\prime)}{\eps}}$. Specifically,  
    \begin{equation*}
        \Prob \left(\forall t \leq \exp{\frac{2(H + \delta^\prime)}{\eps}}: Y_t = X_t \right) = 1.
    \end{equation*} 
    
    Then for any $\delta > 0$, we have:
    \begin{enumerate}
        \item $\ds \lim_{\eps \to 0}\inf_{x \in K} \Prob \left(\exp{\frac{2(H - \delta)}{\eps}} \leq \tau(Y) \leq \exp{\frac{2(H + \delta)}{\eps}} \right) = 1$,
        \item $\ds \lim_{\eps \to 0} \frac{\eps}{2} \log \inf_{x \in K}\E[\tau(Y)] \geq H$,
        \item moreover, for any closed $N \subset \partial \cDc$ such that $\ds \inf_{y \in N} Q(y) > H$, we have
        \begin{equation*}
            \lim_{\eps \to 0} \sup_{x \in K} \Prob \left(Y_{\tau(Y)} \in N \right) = 0.
        \end{equation*}
    \end{enumerate}
\end{corollary}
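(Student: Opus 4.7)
The plan is to reduce each of the three statements to the corresponding conclusion of Theorem~\ref{thm:main_result} applied to the coupled process $X \in \X(\eps, \kappa, x)$, exploiting the fact that the horizon $T_\eps := \exp\!\tfrac{2(H+\delta^\prime)}{\eps}$ is, on the logarithmic scale, strictly longer than the typical exit time. The central observation I would isolate first is the following dichotomy: on the event $\{\tau(X) \leq T_\eps\}$, the coupling hypothesis forces $\tau(Y) = \tau(X)$ and $Y_{\tau(Y)} = X_{\tau(X)}$, while on the complementary event $\{\tau(X) > T_\eps\}$ one has $\tau(Y) \geq T_\eps$. From Theorem~\ref{thm:main_result}(1), the latter event has vanishing probability as soon as the relevant $\eta$ is chosen smaller than $\delta^\prime$, so for proof purposes the two processes can be identified almost surely.

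For claim (1), given $\delta > 0$ I would fix $\eta := \tfrac12 \min(\delta,\delta^\prime)$ and invoke Theorem~\ref{thm:main_result}(1) to pick the associated $\kappa > 0$. Then the hypothesis furnishes $X \in \X(\eps,\kappa,x)$ indistinguishable from $Y$ up to $T_\eps$. On the high-probability event $A_\eps := \{\exp\tfrac{2(H-\eta)}{\eps} \leq \tau(X) \leq \exp\tfrac{2(H+\eta)}{\eps}\}$, the upper bound $\exp\tfrac{2(H+\eta)}{\eps} < T_\eps$ places $\tau(X)$ within the coupling window, so $\tau(Y) = \tau(X)$ and the desired two-sided bound for $\tau(Y)$ with tolerance $\delta > \eta$ holds. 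Uniformity in $x \in K$ and in $X \in \X$ from Theorem~\ref{thm:main_result} transfers directly, giving (1).

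Claim (2) is then a soft consequence of (1): writing
\begin{equation*}
    \E[\tau(Y)] \;\geq\; \exp\!\tfrac{2(H-\delta)}{\eps} \cdot \PP\bigl(\tau(Y) \geq \exp\tfrac{2(H-\delta)}{\eps}\bigr),
\end{equation*}
taking $\tfrac{\eps}{2}\log(\cdot)$, using $\PP(\cdot) \to 1$ uniformly in $x \in K$, and letting $\delta \downarrow 0$ yields the lower bound $\liminf_{\eps\to 0}\tfrac{\eps}{2}\log \inf_{x\in K}\E[\tau(Y)] \geq H$. (The matching upper bound of course fails in this generality, since after time $T_\eps$ the process $Y$ could in principle remain in $\cDc$ for a much longer time than $X$ would — this is exactly why the corollary states only a lower bound here.)

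For claim (3), given $N$ with $q := \inf_{y\in N} Q(y) > H$, I would choose $\eta := \tfrac12 \min(q - H, \delta^\prime)$ so that $q > H + \eta$ and $\eta < \delta^\prime$, and let $\kappa$ be the corresponding parameter from Theorem~\ref{thm:main_result}. Using the same coupling and the same dichotomy,
\begin{equation*}
    \PP(Y_{\tau(Y)} \in N) \;\leq\; \PP(X_{\tau(X)} \in N) \;+\; \PP\bigl(\tau(X) > \exp\tfrac{2(H+\eta)}{\eps}\bigr),
\end{equation*}
and both terms vanish uniformly in $x\in K$ by Theorem~\ref{thm:main_result}(3) and (1) respectively. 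The main bookkeeping point — and the only real subtlety — is to coordinate the choices: $\eta$ must be small enough (for Theorem~\ref{thm:main_result} to yield the desired resolution and for the exit of $X$ to land inside the coupling window $[0,T_\eps]$), and then $\kappa$ is determined by Theorem~\ref{thm:main_result} and fed back into the coupling hypothesis. Beyond that arithmetic, no new technical machinery is needed.
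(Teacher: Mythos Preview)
Your proposal is correct and follows essentially the same approach as the paper: reduce each claim to Theorem~\ref{thm:main_result} via the coupling, using that the coupling horizon $T_\eps$ exceeds the typical exit time on the logarithmic scale. The paper's version is slightly terser (it simply takes $\delta < \delta'$ without loss of generality and, in claim~(3), writes the remainder term as $\Prob(\tau(Y) > \e^{2(H+\delta')/\eps})$ rather than your $\Prob(\tau(X) > \e^{2(H+\eta)/\eps})$), but the logic and the parameter bookkeeping are the same.
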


The proof of this result is direct. Take, without loss of generality, $\delta < \delta^\prime$. Then, since $Y$ is indistinguishable from some $X$ until time $\exp{\frac{2(H + \delta^\prime)}{\eps}}$, we have:
\begin{equation*}
    \Prob\left(\e^{\frac{2(H - \delta)}{\eps}} \leq \tau(Y) \leq \e^{\frac{2(H + \delta)}{\eps}} \right) \geq \Prob\left(\e^{\frac{2(H - \delta)}{\eps}} \leq \tau(X) \leq \e^{\frac{2(H + \delta)}{\eps}} \right).
\end{equation*}
The exit-position result can be achieved using the estimate on the exit time $\tau(Y)$. Indeed:
\begin{equation*}
    \Prob \left(Y_{\tau(Y)} \in N \right) \leq \Prob \left(X_{\tau(X)} \in N \right) + \Prob \left(\tau(Y) > \e^{\frac{2(H + \delta^\prime)}{\eps}} \right),
\end{equation*}
which tends to zero by Theorem~\ref{thm:main_result} and the estimate in $\Prob$ on the exit time $\tau(Y)$. 

For the lower bound in $L_1$, that is the second result of the corollary, we can simply use the Markov's inequality to get for any $\delta > 0$:
\begin{equation*}
    \Prob\left(\e^{\frac{2(H - \delta)}{\eps}} \leq \tau(Y) \right) \e^{\frac{2(H - \delta)}{\eps}} \leq \E[\tau(Y)].
\end{equation*}
Since $\Prob\left(\e^{\frac{2(H - \delta)}{\eps}} \leq \tau(Y) \right) \xrightarrow[\eps \to 0]{} 1$, that gives us 
\begin{equation*}
     \lim_{\eps \to 0} \frac{\eps}{2} \log \inf_{x \in K}\E[\tau(Y)] \geq H - \delta,
\end{equation*}
for any $\delta > 0$ thus proving the second statement of the corollary.

Note that, unlike before, in Corollary~\ref{cor:Kramers_type_law} we get only a lower bound on the exponential rate of growth in $L_1$. In fact, under the conditions on the process $Y$, we can not hope for more. Indeed, take for example the process 
\begin{equation*}
    Y_t(\eps, x) := Z_t(\eps, x) \1 \left\{t \leq \e^{\frac{2(H + 1)}{\eps}} \right\} + a \1 \left\{t > \e^{\frac{2(H + 1)}{\eps}} \right\},
\end{equation*}
where $Z$ is defined in \eqref{eq:def_process_Z} (we recall that $Z(\eps, x) \in \X(\eps, 0, x)$). The following probability
\begin{equation*}
    \Prob \left(\tau(Y) > \e^{\frac{2(H + 1)}{\eps}} \right) \geq \Prob \left(\tau(Z) > \e^{\frac{2(H + 1)}{\eps}} \right)
\end{equation*}
is positive for any $\eps > 0$, thus making $\E[\tau(Y)] = \infty$, since the process $Y$ is forever trapped in $a$ after time $\exp{\frac{2(H + 1)}{\eps}}$.

Although this example may appear artificial, it illustrates an important concept: the behavior of $Y$ after the time $\exp{\frac{2(H + 1)}{\eps}}$ influences the expectation of the exit time $\tau(Y)$, but not its exponential asymptotics in $\Prob$.



\subsection{Applications for the McKean-Vlasov process}

In this section, we present the exit-time result for the McKean-Vlasov process that we establish using Theorem~\ref{thm:main_result}.

Let us define the McKean-Vlasov process that is considered in this section. For each $\eps > 0$ and $x \in \R^d$, consider the following equation:
\begin{equation}
\label{eq:def_McKean-Vlasov}
\begin{cases}
    \dd{Y_t} \!\!\! &= \sqrt{\eps} \diff^\MV(Y_t, \mu_t) \dd{W_t} + \drift^\MV(Y_t, \mu_t)\dd{t}, \\
    \mu_t &= \Law(Y_t), \\
    Y_0 & = x \text{ a.s.}
\end{cases}
\end{equation}
where $\diff^\MV \in C_1(\R^d \times \mathcal{P}_2(\R^d); \R^{d \times d})$ and $\drift^\MV  \in C_1(\R^d \times \mathcal{P}_2(\R^d); \R^d)$.

Let us introduce some assumptions on the drift and diffusion terms. Since this paper does not address the questions of existence and uniqueness, we proceed under the following condition:

\begin{assumption}
\label{assu:MV:existence}
    The functions $\diff^\MV$ and $\drift^\MV$ are such that for any $\eps > 0$ and $x \in \R$ there exists a unique strong solution to \eqref{eq:def_McKean-Vlasov}. Furthermore, there exist $M > 0$ and $p \geq 2$ such that for some compact $K^\prime \subset \R^d$
    \begin{equation*}
        \sup_{0 < \eps < 1}\sup_{x \in K^\prime}\sup_{t \geq 0} \E|Y_t|^{p} < M.
    \end{equation*}
\end{assumption}

We refer to \cite{AT24, HIP2} for some possible conditions that can be taken to ensure Assumption~\ref{assu:MV:existence} in the gradient case where $\drift^\MV(x, \mu) = - \nabla V(x) - \nabla F*\mu(x)$ and $\diff^\MV(x, \mu) = \Id$.

The process \eqref{eq:def_McKean-Vlasov} can also be viewed in the form \eqref{eq:def:inhomogeneous_diff} by associating $\Diff(t, x) = \diff^\MV(x, \mu_t)$ and $\Drift(t, x) = \drift^\MV(x, \mu_t)$. However, the nature of nonhomogeneity of $Y$ is important. Theorem~\ref{thm:main_result} addresses the exit-time problem from a positively invariant set of some kind, with a unique attractor inside it. Since we consider the exit time in the small noise regime, it is natural to expect the law $\mu_t$ to be close to the Dirac delta function centered at the attractor. Consider the following assumption.

\begin{assumption}
\label{assu:MV:FWstuff}
    There exists $a \in \R^d$ such that $\drift = \drift^\MV(\cdot, \delta_a)$ and $\diff = \diff^\MV(\cdot, \delta_a)$ satisfy Assumptions~\ref{assu:1}--\ref{assu:4} for some open $\cDc \subset K$.
\end{assumption}

Assumption~\ref{assu:MV:FWstuff} not only introduces some restrictions on $\drift^\MV$ and $\diff^\MV$, but also brings us back in the framework of Theorem~\ref{thm:main_result} by associating $\drift = \drift^\MV(\cdot, \delta_a)$ and $\diff = \diff^\MV(\cdot, \delta_a)$. Note that in the following it is these $\drift$ and $\diff$ that are understood in Equation~\eqref{eq:def_main_process} and Definition~\ref{def:the_set_X}. 

However, in order to obtain the exit-time result in the case of McKean-Vlasov diffusion process, we need three additional assumptions.

\begin{assumption}
\label{assu:MV:control1}
    There exists an increasing positive function $C: \R \times \R \to \R$ such that for any $\mu, \nu \in \cPc_2(\R^d)$, we have: 
    \begin{equation*}
    \begin{aligned}
        \sup_{x \in \R^d}|\drift^\MV(x, \mu) - \drift^\MV(x, \nu)| \leq C\left(\int |z|^p\mu(\dd{z}), \int |z|^p\nu(\dd{z}) \right) \Wass_2(\mu, \nu), \\ 
        \sup_{x \in \R^d}|\diff^\MV(x, \mu) - \diff^\MV(x, \nu)| \leq C\left(\int |z|^p\mu(\dd{z}), \int |z|^p\nu(\dd{z}) \right) \Wass_2(\mu, \nu),
    \end{aligned}
    \end{equation*}
    where $p$ is defined in Assumption~\ref{assu:MV:existence}. 
\end{assumption}

Note that Assumptions~\ref{assu:MV:existence} and \ref{assu:MV:control1} give us that for $\widetilde{C}_1 := C(M, |a|^p)$, for any $t \geq 0$, we have: 
\begin{equation}
\label{eq:drift_control}
\begin{aligned}
    \sup_{x \in \R^d} |\drift^\MV(x, \mu_t) - \drift(x)| \leq \widetilde{C}_1 \Wass_2(\mu_t, \delta_a), \\
    \sup_{x \in \R^d} |\diff^\MV(x, \mu_t) - \diff(x)| \leq \widetilde{C}_1 \Wass_2(\mu_t, \delta_a), \\
\end{aligned}
\end{equation}

\begin{assumption}
\label{assu:MV:control2}
    There exists a radius $R > 0$ such that for any $\mu, \nu \in \cPc_2(\R^d)$, we have:
    \begin{equation*}
        \sup_{x \in B_R(a)}\|\nabla_{\!x}\drift^\MV(x, \mu) - \nabla_{\!x}\drift^\MV(x, \nu)\| \leq C\left(\int |z|^p\mu(\dd{z}), \int |z|^p\nu(\dd{z}) \right) \Wass_2(\mu, \nu).
    \end{equation*}
\end{assumption}

Similarly to Equation~\eqref{eq:drift_control}, we get for any $t \geq 0$:
\begin{equation*}
    \sup_{x \in B_{R}(a)} \|\nabla_{\!x}\drift^\MV(x, \mu_t) - \nabla \drift(x)\| \leq \widetilde{C}_1 \Wass_2(\mu_t, \delta_a).
\end{equation*}

Note that by Assumption~\ref{assu:3}, the Jacobian matrix $\nabla_{\!x} \drift^\MV(a, \delta_a) = \nabla \drift(a)$ is negative-definite, that is there exists a positive constant $\widetilde{K} > 0$ such that:
\begin{equation*}
    \nabla_{\!x} \drift^\MV(a, \delta_a) = \nabla \drift(a) \preceq - \widetilde{K} \, \Id.
\end{equation*}
Note also that $\nabla_{\!x} \drift^\MV(x, \delta_a)$ is assumed to be continuous and, therefore, this convexity property is preserved in any small neighborhood of $a$. Specifically, there exists a function $\widetilde{K}(r) \in (0, \widetilde{K}]$ such that $\widetilde{K}(r) \xrightarrow[r \to 0]{} \widetilde{K}$ and for any $r>0$ small enough, and for any $x \in B_r(a)$ we have:
\begin{equation}
\label{eq:nabla_drift_control}
    \nabla_{\!x} \drift^\MV(x, \delta_a) = \nabla \drift(x) \preceq - \widetilde{K}(r) \, \Id.
\end{equation}
Without loss of generality, let us say that $R$ is chosen to be small enough such that the property~\eqref{eq:nabla_drift_control} holds for any $r \in [0, R]$.

The last assumption we have to make concerns the control of the drift term subjected to a small perturbation of the measure around the attractor $a$. We recall that $\drift^\MV(a, \delta_a) = \drift(a) = 0$. Consider the following assumption.

\begin{assumption}
\label{assu:MV:control3}
    For any $\mu \in \cPc_2(\R^d)$, we have $|\drift^\MV(a, \mu)| < \widetilde{K} \Wass_2(\mu, \delta_a)$.
\end{assumption}

In particular, the assumption above means that there exists $\widetilde{K}_1\in (0, \widetilde{K})$ such that:
\begin{equation}
\label{eq:drift_at_a_control}
     |\drift^\MV(a, \mu)| \leq \widetilde{K}_1 \Wass_2(\mu, \delta_a).
\end{equation}
We will make use of this notion in what follows.

The general approach used to resolve the exit-time problem for McKean-Vlasov process consists in utilizing the tool presented in Corollary~\ref{cor:Kramers_type_law}. In order to do so, we need to control in some sense the variation of the drift and diffusion terms. In light of Equation~\eqref{eq:drift_control}, it is in fact enough to control the $\Wass_2$-distance between the law of the process $\mu_t$ and $\delta_a$ for a sufficiently long time. Namely, it is sufficient to prove the following lemma.

\begin{lemma}
\label{lm:McKean-Vlasov_law_control}
    There exists $\delta^\prime$ such that for any $\kappa > 0$ and for any $\eps > 0$ small enough, we have:
    \begin{equation}
    \label{eq:aux:Wass_leq_rho}
        \Wass_2(\mu_t, \delta_a) \leq \kappa/\widetilde{C}_1,
    \end{equation}
    for any $t \leq \exp{\frac{2(H + \delta^\prime)}{\eps}}$. 
\end{lemma}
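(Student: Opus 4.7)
The plan is a bootstrap on $v(t) := \Wass_2^2(\mu_t, \delta_a) = \E|Y_t - a|^2$ that exploits the local quadratic dissipativity of $\drift^\MV(\cdot, \mu_t)$ near the attractor $a$ together with the Freidlin--Wentzell estimates of Theorem~\ref{thm:main_result}, which are available so long as the $\Wass_2$-distance stays below $\kappa/\widetilde{C}_1$.

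I would introduce the stopping time $T_\kappa := \inf\{t \geq 0 : \Wass_2(\mu_t, \delta_a) > \kappa/\widetilde{C}_1\}$. On $[0, T_\kappa)$, Assumption~\ref{assu:MV:control1} and~\eqref{eq:drift_control} imply that the coefficients $\Drift(t,x) := \drift^\MV(x, \mu_t)$ and $\Diff(t,x) := \diff^\MV(x, \mu_t)$ satisfy~\eqref{eq:def_X_eps_rho_x_K} with parameter $\kappa$. Freezing the law $\mu_t$ at $\mu_{T_\kappa}$ for $t \geq T_\kappa$ then produces an auxiliary process $\widehat{X} \in \X(\eps, \kappa, x)$ that agrees with $Y$ on $[0, T_\kappa]$, making Theorem~\ref{thm:main_result} applicable to $\widehat{X}$. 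The goal reduces to showing $T_\kappa > \exp(2(H + \delta')/\eps)$ with probability tending to $1$, for some $\delta' > 0$ independent of $\kappa$.

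Next I apply It\^o's formula to $|Y_t - a|^2$. On $\{Y_t \in B_R(a)\}$, writing $\drift^\MV(Y_t, \mu_t) = \drift^\MV(a, \mu_t) + \bigl(\int_0^1 \nabla_{\!x} \drift^\MV(a + r(Y_t - a), \mu_t)\, dr\bigr)(Y_t - a)$ and combining~\eqref{eq:nabla_drift_control}, the $\Wass_2$-Lipschitz control of Assumption~\ref{assu:MV:control2}, the bound~\eqref{eq:drift_at_a_control} and a Young inequality yields, for $R$ and $\kappa$ small enough and some constants $\alpha > \beta > 0$,
\begin{equation*}
2(Y_t - a) \cdot \drift^\MV(Y_t, \mu_t) \, \1_{\{Y_t \in B_R(a)\}} \;\leq\; -\alpha\,|Y_t - a|^2\, \1_{\{Y_t \in B_R(a)\}} + \beta\, v(t).
\end{equation*}
Adding the diffusion contribution, bounded by $C_\sigma \eps$ thanks to Assumption~\ref{assu:4} and the closeness of $\diff^\MV$ to $\diff$ on $[0, T_\kappa)$, and taking expectations gives the differential inequality
\begin{equation*}
v'(t) \;\leq\; -(\alpha - \beta)\, v(t) + C_\sigma\,\eps + R_\eps(t),
\end{equation*}
in which $R_\eps(t)$ collects the contribution of $\{Y_t \notin B_R(a)\}$ and is bounded, via Cauchy--Schwarz and the moment control of Assumption~\ref{assu:MV:existence}, by a constant multiple of $\Prob(Y_t \notin B_R(a))^{1/2}$.

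The main obstacle is controlling $R_\eps(t)$ uniformly for $t \in [0, \exp(2(H + \delta')/\eps)]$: a direct application of Theorem~\ref{thm:main_result} to $\widehat{X}$ with the domain $B_R(a)$ (which satisfies Assumptions~\ref{assu:1}--\ref{assu:4} for $\drift, \diff$ by~\eqref{eq:nabla_drift_control}) only provides an exit time of order $\exp(2 H_R/\eps)$, where $H_R < H$ is the quasi-potential height of $B_R(a)$. To bridge this gap I would work at two scales by introducing a slightly larger ball $B_{R'}(a) \subset \cDc$ inside which the deterministic flow $\drift$ is still strongly attracted to $a$: any excursion of $Y$ out of $B_R(a)$ is returned to $B_R(a)$ in deterministic time $O(\log(R'/R))$, and via the strong Markov property (Remark~\ref{rem:X_Markov_property}) together with a union bound over the $\lesssim \exp(2(H+\delta')/\eps)$ consecutive windows of this bounded length one obtains $\Prob(Y_t \notin B_R(a)) = o_\eps(1)$ uniformly on the range. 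Finally, Gronwall's inequality applied to the differential inequality yields $v(t) \leq O(\eps) + o_\eps(1)$ on $[0, T_\kappa \wedge \exp(2(H+\delta')/\eps)]$; for $\eps$ small this is strictly below $(\kappa/\widetilde{C}_1)^2$, contradicting $T_\kappa < \exp(2(H+\delta')/\eps)$ and proving the lemma.
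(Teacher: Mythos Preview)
Your overall strategy---freezing the law past $T_\kappa$ to land in $\X(\eps,\kappa,x)$, applying It\^o to $|Y_t-a|^2$, splitting into a locally dissipative part on $\{Y_t\in B_R(a)\}$ plus a remainder $R_\eps(t)$ controlled by $\Prob(Y_t\notin B_R(a))^{1/2}$, and closing with Gronwall---is essentially the paper's approach. The gap is in your control of $\Prob(Y_t\notin B_R(a))$ uniformly over $t\le \exp{\frac{2(H+\delta')}{\eps}}$.

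Your proposed outer scale $B_{R'}(a)\subset\cDc$ cannot do the job. Since $R'$ must be small (you need the local convexity~\eqref{eq:nabla_drift_control} there), the height $H_{R'}:=\inf_{|y-a|=R'}Q(y)$ is strictly less than $H$. Hence the exit time from $B_{R'}(a)$ is only of order $\exp{\frac{2H_{R'}}{\eps}}\ll\exp{\frac{2(H+\delta')}{\eps}}$: on the time window you need, the process escapes $B_{R'}(a)$ exponentially many times and your ``return in bounded time'' mechanism loses track of it. A union bound over $\sim\exp{\frac{2(H+\delta')}{\eps}}$ windows fails for the same reason---the per-window escape probability is $\exp{-\frac{2H_{R'}}{\eps}}$ with $H_{R'}<H+\delta'$, so the sum diverges rather than vanishes.

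The paper resolves this by going \emph{outward} rather than staying inside $\cDc$: it takes an enlargement $\cDc^e_r\supset\cDc$ still satisfying Assumptions~\ref{assu:1}--\ref{assu:4}, with quasi-potential height $H^e_r>H$, and \emph{defines} $\delta'$ by $H+2\delta'<H^e_r$. Theorem~\ref{thm:main_result} applied to $\cDc^e_r$ then gives $\Prob\bigl(\tau^e_r(\widehat{X})<\exp{\frac{2(H+\delta')}{\eps}}\bigr)\to 0$. Since the deterministic flow from anywhere in $\cDc^e_r$ reaches a small ball around $a$ in a bounded time $T$, conditioning on $\widehat{X}_{t-T}\in\cDc^e_r$ yields $\Prob(\widehat{X}_t\notin B_\rho(a))=o_\eps(1)$ uniformly in $t$ (this is Lemma~\ref{lm:X_t_notin_B_rho}); no union bound over exponentially many windows is needed. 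Once you replace $B_{R'}(a)$ by $\cDc^e_r$ and tie $\delta'$ to $H^e_r-H$, the rest of your argument goes through.
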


By Equation~\eqref{eq:drift_control}, the result of the lemma gives us
\begin{equation*}
\begin{aligned}
    \sup_{t \leq \exp{\frac{2(H + \delta^\prime)}{\eps}}} \sup_{x \in \R^d} |\drift^\MV(x, \mu_t) - \drift(x)| \leq \kappa, \\
    \sup_{t \leq \exp{\frac{2(H + \delta^\prime)}{\eps}}} \sup_{x \in \R^d} |\diff^\MV(x, \mu_t) - \diff(x)| \leq \kappa,
\end{aligned}
\end{equation*}
which brings us back to the conditions of Corollary~\ref{cor:Kramers_type_law}. 

The intuition behind this statement is clear. As a result of $\cDc$ being a basin of attraction for $a$ for the noiseless flow $\dot{\varphi}(t) = \drift^\MV(\varphi(s), \delta_a)$, for small values of the noise parameter $\eps$, we expect $Y_t$ to be found around $a$ with high probability. However, the proof of such a statement is not obvious, since the window of time $\left[0, \exp{\frac{2(H + \delta^\prime)}{\eps}} \right]$ also expands with a decrease of $\eps$. Corollary~\ref{cor:Kramers_type_law} gives us the following result in the case of McKean-Vlasov process. 

\begin{thm}
\label{thm:McKean_exit_time}
    Let Assumptions~\ref{assu:MV:existence}--\ref{assu:MV:control3} hold and let $Y$ be the unique strong solution of \eqref{eq:def_McKean-Vlasov} with $x = a$. Then for any $\delta > 0$, we have:
    \begin{enumerate}
        \item $\ds \lim_{\eps \to 0}\inf_{x \in K} \Prob \left(\exp{\frac{2(H - \delta)}{\eps}} \leq \tau(Y) \leq \exp{\frac{2(H + \delta)}{\eps}} \right) = 1$,
        \item $\ds \lim_{\eps \to 0} \frac{\eps}{2} \log \inf_{x \in K}\E[\tau(Y)] \geq H$,
        \item moreover, for any closed $N \subset \partial \cDc$ such that $\ds \inf_{y \in N} Q(y) > H$, we have
        \begin{equation*}
            \lim_{\eps \to 0} \sup_{x \in K} \Prob \left(Y_{\tau(Y)} \in N \right) = 0.
        \end{equation*}
    \end{enumerate}    
\end{thm}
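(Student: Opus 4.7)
The plan is to reduce the theorem to Corollary~\ref{cor:Kramers_type_law} by constructing, for every small enough $\eps$, a process $X \in \X(\eps, \kappa, a)$ indistinguishable from $Y$ on the window $[0, T_\eps]$ with $T_\eps := \exp\!\left(\frac{2(H+\delta^\prime)}{\eps}\right)$, where $\delta^\prime$ is the constant supplied by Lemma~\ref{lm:McKean-Vlasov_law_control}. Fix $\kappa > 0$ and choose $\eps$ small enough that $\Wass_2(\mu_t, \delta_a) \leq \kappa/\widetilde{C}_1$ for every $t \in [0, T_\eps]$; by \eqref{eq:drift_control} the inhomogeneous coefficients $\drift^\MV(\cdot, \mu_t)$ and $\diff^\MV(\cdot, \mu_t)$ are then $\kappa$-close to $\drift$ and $\diff$ on this window. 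Since Definition~\ref{def:the_set_X} requires the closeness bounds for \emph{all} $t \geq 0$, I would freeze the law past $T_\eps$ by setting
\begin{equation*}
\Drift(t, y) := \drift^\MV(y, \mu_{t \wedge T_\eps}), \qquad \Diff(t, y) := \diff^\MV(y, \mu_{t \wedge T_\eps}),
\end{equation*}
and let $X$ be the strong solution of the corresponding SDE driven by the same Brownian motion as $Y$ with $X_0 = a$. Strong uniqueness on $[0, T_\eps]$ (where the equations coincide) gives $X_t = Y_t$ almost surely for every such $t$, and the coefficients satisfy \eqref{eq:def_X_eps_rho_x_K} uniformly in $t \geq 0$ by construction, so $X \in \X(\eps, \kappa, a)$. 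The hypothesis of Corollary~\ref{cor:Kramers_type_law} (with the singleton compact $\{a\} \subset \cDc$) is then met, and its three conclusions are exactly those claimed here.

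The substance of the argument is therefore entirely contained in Lemma~\ref{lm:McKean-Vlasov_law_control}. Using $\Wass_2^2(\mu_t, \delta_a) \leq \E|Y_t - a|^2 =: u(t)$, it suffices to establish $u(t) = O(\eps)$ on $[0, T_\eps]$. Applying Itô's formula and splitting
\begin{equation*}
\drift^\MV(Y_t, \mu_t) = \drift^\MV(a, \mu_t) + \int_0^1 \nabla_{\!x}\drift^\MV(a + s(Y_t - a), \mu_t)(Y_t - a)\,\dd{s},
\end{equation*}
one bounds the first term by $\widetilde{K}_1 \sqrt{u(t)}$ via \eqref{eq:drift_at_a_control}, and the second term by \eqref{eq:nabla_drift_control} together with Assumption~\ref{assu:MV:control2}, producing contraction at rate $2\widetilde{K}(r)$ provided $Y_t \in B_R(a)$. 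The strict inequality $\widetilde{K}_1 < \widetilde{K}$ leaves a margin of net contraction, so a Gronwall argument on $u(t)$ then yields the desired bound $u(t) \leq C\eps$.

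The hard part will be maintaining the localization $Y_t \in B_R(a)$ over the exponentially long horizon $[0, T_\eps]$, as the contraction step above is only valid inside $B_R(a)$. The plan is to introduce the stopping time $\sigma_R := \inf\{t \geq 0 : Y_t \notin B_R(a)\}$, carry out the Gronwall argument on the event $\{\sigma_R > t\}$, and separately show that $\Prob(\sigma_R \leq T_\eps) \to 0$ as $\eps \to 0$. The latter is itself a Freidlin--Wentzell-type escape estimate for the inhomogeneous process $Y$ from the smaller ball $B_R(a)$, which I would establish iteratively on short subwindows on which $\mu$ varies little, so that Theorem~\ref{thm:main_result} applies with $B_R(a)$ in place of $\cDc$; the parameter $\delta^\prime$ must then be chosen strictly smaller than the quasi-potential height associated with escape from $B_R(a)$, so that escape is exponentially unlikely on the whole window and closes the bootstrap.
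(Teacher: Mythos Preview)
Your reduction to Corollary~\ref{cor:Kramers_type_law} by freezing the law past $T_\eps$ is correct and is essentially the paper's argument (the paper instead stops $Y$ at $S_\kappa:=\inf\{t:\Wass_2(\mu_t,\delta_a)\ge\kappa/\widetilde C_1\}$, which has the same effect of placing the relevant process in $\X$). The It\^o/Gronwall attack on $u(t)=\E|Y_t-a|^2$ is also the right mechanism.

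The genuine gap is your localization step: the claim $\Prob(\sigma_R\le T_\eps)\to 0$ is \emph{false}. Since $B_R(a)\subsetneq\cDc$ with $R$ small, the quasi-potential height $H_R:=\inf_{y\in S_R(a)}Q(y)$ is strictly smaller than $H$ (in fact $H_R\to0$ as $R\to0$), so the Freidlin--Wentzell exit from $B_R(a)$ occurs around time $\exp(2H_R/\eps)\ll T_\eps=\exp(2(H+\delta^\prime)/\eps)$; one has $\Prob(\sigma_R\le T_\eps)\to1$, not $0$. Your proposed fix of choosing $\delta^\prime$ below the $B_R(a)$-height would force $H+\delta^\prime<H_R<H$, impossible for $\delta^\prime>0$. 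The process genuinely makes many excursions out of $B_R(a)$ before $T_\eps$; a pathwise localization on the whole window cannot work, and no iteration over subwindows repairs this, because on each subwindow of length comparable to $\exp(2H_R/\eps)$ the escape probability is of order one.

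What is true, and what the paper uses, is the \emph{pointwise-in-time} statement $\sup_{t\le T_\eps}\Prob(X_t\notin B_\rho(a))\to0$ (Lemma~\ref{lm:X_t_notin_B_rho}). This is proved not via $B_R(a)$ but via an \emph{enlargement} $\cDc^e\supset\cDc$ with height $H^e>H$: one picks $\delta^\prime$ so that $H+2\delta^\prime<H^e$, hence exit from $\cDc^e$ is unlikely before $T_\eps$ by Theorem~\ref{thm:main_result}; conditionally on remaining in $\cDc^e$, the drift brings the process back into $B_\rho(a)$ within any $O(1)$ window. The circularity you anticipated is cut because the stopped process $Y_{\cdot\wedge S_\kappa}$ already lies in $\X$, so Theorem~\ref{thm:main_result} applies to it directly with no bootstrap. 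The Gronwall contraction is then run only on a \emph{fixed short} interval $[0,T_1]$---where $\Prob(\text{exit from }B_{2\rho}(a)\text{ before }T_1)\to0$ is a legitimate short-time estimate---and the bound for general $t\in(T_1,T_\eps]$ is obtained by conditioning on $\cFc_{t-T_1}$, using the pointwise estimate together with the moment bound of Assumption~\ref{assu:MV:existence} to control the complementary event $\{X_{t-T_1}\notin B_\rho(a)\}$.
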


This result extends the findings in \cite{AT24}, which considered the specific case $\drift^\MV(x, \mu_t) = - \nabla V(x) - \nabla F*\mu_t(x)$ and $\diff^\MV(x, \mu_t)$ as the identity matrix, by providing analogous results for more general drift and diffusion terms. Furthermore, Corollaries~\ref{cor:main_result_uniform} and \ref{cor:Kramers_type_law} clarify why, in \cite{AT24}, the authors could only show the exit-time asymptotics in $\Prob$. In contrast, the earlier study \cite{HIP2}, which assumes convexity of both $V$ and $F$, additionally established:
\begin{equation*}
     \lim_{\eps \to 0} \frac{\eps}{2} \log \inf_{x \in K}\E[\tau(Y)] = H.
\end{equation*}
The convexity of $V$ and $F$ ensures control of the law $\mu_t$ \textit{for all} $t \geq 0$, thereby aligning with the framework of Corollary~\ref{cor:main_result_uniform} rather than Corollary~\ref{cor:Kramers_type_law}. At the same time, in \cite{AT24}, potentials $V$ and $F$ are not assumed to be convex, thus making the uniform in time control of the law impossible in general. This shows that both results in \cite{HIP2} and \cite{AT24} can be viewed as particular cases of the broader results presented in this paper.

\section{Proof of the main result}

Our proof of Theorem~\ref{thm:main_result} follows the logic of the Freidlin-Wentzell theory for the time-homogeneous case, as presented in \cite{FW, DZ}. However, adapting their results to our time-inhomogeneous setting is challenging and requires careful consideration. We start the proof by providing some auxiliary lemmas. 

The first result demonstrates that, with high probability, the distance between $X$ and $Z$ remains within a predetermined interval over a given in advance time. Additionally, by reducing $\kappa$, we can make sure that the probability approaches 1 exponentially fast, when $\eps$ tends to 0. This lemma is not used directly in the proof of Theorem~\ref{thm:main_result}, yet it is essential for other auxiliary results.

\begin{lemma}
\label{lm:|X_t-Z_t|>delta}
For any $\kappa > 0$, for any $T, \delta > 0$, there exists a constant $C > 0$ such that for any $\eps \leq 1$, we have
    \begin{equation*}
        \frac{\eps}{2} \log \sup_{x \in \cDc} \sup_{X \in \X} \Prob (\sup_{t \in [0, T]} |X_t - Z_t| > \delta) \leq C + \log(\frac{\kappa^2}{\kappa^2 + C^2\delta^2}) =: -\varsigma(\kappa, T, \delta)
    \end{equation*}
\end{lemma}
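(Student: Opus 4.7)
Since every $X \in \X$ and $Z$ are driven by the same Brownian motion $W$ and share the initial point $x$, the coupled difference $D_t := X_t - Z_t$ satisfies $D_0 = 0$ and
\begin{equation*}
dD_t = \sqrt{\eps}\bigl[\Diff(t, X_t) - \diff(Z_t)\bigr]\, dW_t + \bigl[\Drift(t, X_t) - \drift(Z_t)\bigr]\, dt.
\end{equation*}
Writing
\begin{equation*}
\Drift(t, X_t) - \drift(Z_t) = \bigl[\Drift(t, X_t) - \drift(X_t)\bigr] + \bigl[\drift(X_t) - \drift(Z_t)\bigr]
\end{equation*}
(and similarly for $\Diff$), the uniform bound \eqref{eq:def_X_eps_rho_x_K} on the first bracket and the Lipschitz property of $\drift, \diff$ on the second imply that both coefficients in the SDE for $D$ have norm at most $\kappa + L|D_t|$, with $L$ a uniform Lipschitz constant.

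The plan is to apply Itô's formula to $\phi_\alpha(y) := (\kappa^2 + |y|^2)^{\alpha}$ with $\alpha := 2/\eps$, and to exploit the elementary inequality $(\kappa + L|D_t|)^2 \leq 2\max(1, L^2)(\kappa^2 + |D_t|^2)$, which absorbs the Lipschitz error into a multiplicative factor on $\phi_\alpha$. A direct computation of the first-order term $\langle \nabla \phi_\alpha(D_t), \Drift - \drift\rangle$ and the second-order term $\tfrac{\eps}{2}\operatorname{tr}\bigl[(\Diff - \diff)(\Diff - \diff)^{\!*}\nabla^2 \phi_\alpha(D_t)\bigr]$ then yields
\begin{equation*}
d\phi_\alpha(D_t) = dM_t + A_t\, dt, \qquad |A_t| \leq C_2\, \alpha(\alpha + d)\, \eps\, \phi_\alpha(D_t),
\end{equation*}
for a constant $C_2 = C_2(L, d)$, so that $e^{-\lambda t}\phi_\alpha(D_t)$ is a nonnegative supermartingale on $[0, T]$ with $\lambda := C_2 \alpha(\alpha + d)\eps = O(1/\eps)$ as $\eps \to 0$.

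Introducing $\tau_\delta := \inf\{t \geq 0 : |D_t| \geq \delta\}$ and applying the supermartingale maximal inequality together with the pointwise bound $\phi_\alpha(D_{\tau_\delta}) \geq (\kappa^2 + \delta^2)^{\alpha}$ on $\{\tau_\delta \leq T\}$, one obtains
\begin{equation*}
\Prob(\tau_\delta \leq T) \;\leq\; \frac{\phi_\alpha(0)}{e^{-\lambda T}(\kappa^2 + \delta^2)^{\alpha}} \;=\; e^{\lambda T}\left(\frac{\kappa^2}{\kappa^2 + \delta^2}\right)^{\alpha}.
\end{equation*}
Since $\alpha\eps/2 = 1$ and $\lambda \eps/2 = O(1)$, taking $\eps/2$ times the logarithm leads to $\tfrac{\eps}{2}\log \Prob \leq C_3 T + \log\bigl(\kappa^2/(\kappa^2 + \delta^2)\bigr)$ for a constant $C_3 = C_3(L, d)$; the exact form of the statement is then obtained by choosing $C \geq 1$ large enough that $C - 2\log C \geq C_3 T$, using $\log\bigl((\kappa^2 + C^2\delta^2)/(\kappa^2 + \delta^2)\bigr) \leq 2\log C$ to rearrange constants.

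The main technical work lies in the explicit Itô computation verifying the bound $|A_t| \leq C_2 \alpha(\alpha + d)\eps \phi_\alpha(D_t)$: it is crucial that the coefficient $L|D_t|$ of the Lipschitz part be absorbed by the natural scaling $(\kappa^2 + |D_t|^2)$ of $\phi_\alpha$, producing a multiplicative correction rather than degrading the polynomial ratio. Uniformity over $x \in \cDc$ and $X \in \X(\eps, \kappa, x)$ is automatic, as all constants depend only on $L, d, T, \kappa, \delta$ and not on the specific $(\Drift, \Diff)$ or on the starting point.
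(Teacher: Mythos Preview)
Your proposal is correct and follows essentially the same route as the paper. The paper's proof simply invokes \cite[Lemma~5.6.18]{DZ} as a black box---which is precisely the supermartingale argument you sketch (It\^o applied to $(\kappa^2+|D_t|^2)^{2/\eps}$, then a maximal inequality)---and then verifies, via the same decomposition $\Drift-\drift = (\Drift-\drift(X)) + (\drift(X)-\drift(Z))$, that the coefficients of $D_t=X_t-Z_t$ satisfy the required growth bound $|\cdot|\le C_1(\rho^2+|D_t|^2)^{1/2}$ with $\rho=\kappa/C$; your write-up simply unpacks that cited lemma rather than quoting it.
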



The next lemma provides a uniform in $x \in \cDc$ and in $X \in \X$ lower bound for the probability that the stopping time $\tau(X)$ is less than or equal to a given in advance positive time $T_1$. This lower bound is given in terms of an exponentially decaying function of $\eps$.

\begin{lemma}
\label{lm:P_tau_D_lower_bound}
    For any $\eta > 0$ there exist $\kappa > 0$ and a positive time $T_1$, such that for any $0 < \eps < 1$ we have
    \begin{equation*}
        \inf_{x \in \cDc} \inf_{X \in \X} \Prob (\tau(X) \leq T_1) \geq \exp{-\frac{2(H + \eta/2)}{\eps}},        
    \end{equation*}
\end{lemma}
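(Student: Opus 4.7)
The approach I would take is the classical Freidlin--Wentzell two--step lower--bound construction, adapted to the time--inhomogeneous setting by transferring sample--path estimates from the reference $Z$ to $X$ through Lemma~\ref{lm:|X_t-Z_t|>delta}. Concretely, I would first force $X$ into a small ball $B_r(a)$ in a uniform deterministic time $T_0$ with probability bounded below by a constant, and then, invoking the strong Markov property from Remark~\ref{rem:X_Markov_property}, push $X$ out of $\cDc$ from $B_r(a)$ by shadowing, inside a $\delta$--tube, a continuous path of action $<H+\eta/4$. The parameter $\kappa$ is fixed only at the very end, small enough that the exceptional probability $e^{-2\varsigma(\kappa,\cdot,\cdot)/\eps}$ supplied by Lemma~\ref{lm:|X_t-Z_t|>delta} is dominated by $e^{-2(H+\eta/2)/\eps}$.

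\emph{Step 1: a quasi--optimal exit path.} The definitions of $H$ and $U(a,y)$ yield $y^\star\in\partial\cDc$, $T^\star>0$ and $\psi:[0,T^\star]\to\R^d$ with $\psi_0=a$, $\psi_{T^\star}=y^\star$ and $I^a_{T^\star}(\psi)<H+\eta/8$. Assumption~\ref{assu:2} lets me extend $\psi$ by a short outward segment at $y^\star$ landing strictly outside $\Cl(\cDc)$, at additional cost $<\eta/16$. For $x_0\in B_r(a)$, I prepend the straight segment from $x_0$ to $a$: since $\drift(a)=0$ and $(\diff\diff^*)^{-1}$ is uniformly bounded near $a$ (Assumption~\ref{assu:4}), the prefix cost is $O(r)$ and hence $<\eta/16$ for $r$ small. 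This produces, uniformly in $x_0\in B_r(a)$, a path $\tilde\psi_{x_0}$ on some fixed interval $[0,T^\dagger]$ with $\tilde\psi_{x_0}(0)=x_0$, $\tilde\psi_{x_0}(T^\dagger)\notin\Cl(\cDc)$, and $I^{x_0}_{T^\dagger}(\tilde\psi_{x_0})<H+\eta/4$.

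\emph{Step 2: entering $B_r(a)$.} By Assumptions~\ref{assu:1}--\ref{assu:2}, $\Cl(\cDc)$ is a compact set on which the deterministic flow $\phi$ is defined and every orbit converges to $a$; by continuous dependence on initial conditions and compactness there is $T_0$ with $\phi_{T_0}(x)\in B_{r/4}(a)$ uniformly in $x\in\Cl(\cDc)$. A Gronwall estimate gives $\Prob(|Z_{T_0}-\phi_{T_0}(x)|>r/4)\to 0$ uniformly in $x$, and Lemma~\ref{lm:|X_t-Z_t|>delta} applied on $[0,T_0]$ with $\delta=r/4$ and $\kappa$ small enough that $\varsigma(\kappa,T_0,r/4)$ is large yields
\[\inf_{x\in\Cl(\cDc)}\inf_{X\in\X}\Prob\bigl(X_{T_0}\in B_r(a)\bigr)\ \geq\ \tfrac12\]
for all $\eps$ sufficiently small.

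\emph{Step 3: exit from $B_r(a)$ and conclusion.} Pick $\delta$ so small that any trajectory lying within $\delta$ of $\tilde\psi_{x_0}$ exits $\cDc$ before $T^\dagger$. The standard Freidlin--Wentzell lower bound for $Z$ starting at $x_0$, together with continuity of the rate function in the base point near $a$, gives
\[\Prob\!\Bigl(\sup_{t\leq T^\dagger}|Z_t-\tilde\psi_{x_0}(t)|<\delta\Bigr)\ \geq\ \exp{-\tfrac{2(H+5\eta/16)}{\eps}}\]
uniformly in $x_0\in B_r(a)$, for $\eps$ small. Lemma~\ref{lm:|X_t-Z_t|>delta} then transfers this to any $X'\in\X(\eps,\kappa,x_0)$ up to an error $e^{-2\varsigma(\kappa,T^\dagger,\delta)/\eps}$, which is absorbed by shrinking $\kappa$ once more so that $\varsigma(\kappa,T^\dagger,\delta)>H+\eta/2$. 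Combining Steps~2 and~3 via the strong Markov property of Remark~\ref{rem:X_Markov_property} at time $T_0$, with $T_1:=T_0+T^\dagger$, gives
\[\inf_{x\in\cDc}\inf_{X\in\X}\Prob(\tau(X)\leq T_1)\ \geq\ \tfrac14\exp{-\tfrac{2(H+5\eta/16)}{\eps}}\ \geq\ \exp{-\tfrac{2(H+\eta/2)}{\eps}}\]
for all small $\eps$. The main obstacle I foresee is the tight bookkeeping: the parameters $(r,\delta,\kappa)$ must be calibrated in the correct order, so that the implicit $o(1)/\eps$ loss in the LDP lower bound, the endpoint continuity of the rate function at $a$, and the Lemma~\ref{lm:|X_t-Z_t|>delta} penalty $\varsigma$ together remain under the budget $\eta/2$.
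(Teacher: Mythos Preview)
Your approach is correct but takes a genuinely different route from the paper. The paper avoids your two-step decomposition entirely: it enlarges $\cDc$ to a domain $\cDc^e_\rho\supset\cDc$ satisfying Assumptions~\ref{assu:1}--\ref{assu:4} with $d(\partial\cDc^e_\rho,\cDc)\geq\rho$ and quasipotential height $H^e_\rho<H+\eta/8$, invokes the classical Freidlin--Wentzell lower bound for $Z$ on the enlarged domain (Lemma~\ref{lm:tau(Z)<T_lower_bound}) to obtain $\inf_{x\in\cDc}\Prob(\tau^e_\rho(Z)\leq T_1)\geq e^{-2(H+\eta/4)/\eps}$ in one stroke, and then transfers to $X$ in a single step via the inclusion
\[
\{\tau^e_\rho(Z)\leq T_1\}\cap\Bigl\{\sup_{t\leq T_1}|X_t-Z_t|\leq\rho/2\Bigr\}\ \subset\ \{\tau(X)\leq T_1\}
\]
together with Lemma~\ref{lm:|X_t-Z_t|>delta}. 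Your construction instead rebuilds the lower bound from scratch out of the LDP tube estimate, inserting an explicit attraction phase (Step~2) and a Markov-property splice at time $T_0$. This buys self-containment---you do not need to cite the packaged result of Lemma~\ref{lm:tau(Z)<T_lower_bound}---but at the price of heavier parameter bookkeeping and the uniformity-in-$x_0$ issue you correctly flag in Step~3. The enlargement trick is precisely what lets the paper sidestep both the attraction phase and the dependence of the exit tube on the starting point: once $Z$ has left $\cDc^e_\rho$, any $X$ within $\rho/2$ of it has already left $\cDc$, regardless of where it started.
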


The following lemma shows that the probability that the process, starting from a small sphere around $a$, hits $\partial \cDc$ before coming closer to $a$ and hitting an even smaller sphere is exponentially small, uniformly in $X \in \X$. Define 
\begin{equation}
\label{eq:def:tau_prime}
    \tau_{r}^\prime(X) := \inf\{t \geq 0: X_t \in B_{r}(a)\cup\partial\cDc\}
\end{equation} 
and consider

\begin{lemma}
\label{lm:tau_prime}
    For any $\eta > 0$ and for any $\rho > 0$ small enough there exists $\kappa > 0$ such that
    \begin{equation*}
        \limsup_{\eps \to 0} \frac{\eps}{2} \log \sup_{x \in S_{2 \rho}(a)} \sup_{X \in \X} \Prob (X_{\tau_{\rho}^\prime(X)} \in \partial \cDc) \leq -H + \frac{\eta}{2}.
    \end{equation*}
\end{lemma}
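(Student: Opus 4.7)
The plan is to transfer the classical Freidlin--Wentzell exit-location estimate for the time-homogeneous process $Z$ of~\eqref{eq:def_process_Z} to the inhomogeneous family $\X$ via a coupling through the common Brownian motion $W$, with the coupling error controlled by Lemma~\ref{lm:|X_t-Z_t|>delta}. The homogeneous counterpart of the lemma,
\begin{equation*}
    \limsup_{\eps \to 0} \frac{\eps}{2}\log \sup_{x \in S_{2\rho}(a)}\Prob\bigl(Z_{\tau_\rho^\prime(Z)}\in\partial\cDc\bigr) \leq -H + \eta/4,
\end{equation*}
valid for every $\eta > 0$ and every $\rho$ small enough, is established in \cite{FW,DZ} via the LDP upper bound combined with continuity of the quasi-potential at $a$, which ensures that the infimum of the rate function over trajectories from $S_{2\rho}(a)$ to $\partial\cDc$ avoiding $B_\rho(a)$ is at least $H - \eta/4$.

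To pass from $Z$ to $X$, I would drive the two processes with the same Brownian motion, fix a time horizon $T$ (independent of $\eps$, to be chosen large) and a coupling tolerance $\delta \in (0, \rho/2)$, and decompose the target probability on (i)~the good coupling event $\{\sup_{t \leq T}|X_t - Z_t| \leq \delta\}$ intersected with $\{\tau_\rho^\prime(X) \leq T\}$, (ii)~its coupling-complement $\{\sup_{t \leq T}|X_t - Z_t| > \delta\}$, and (iii)~the large-time event $\{\tau_\rho^\prime(X) > T\}$. On~(i), an exit of $X$ via $\partial\cDc$ ahead of a visit to $B_\rho(a)$ forces $Z$ to reach the $\delta$-neighborhood of $\partial\cDc$ while staying outside $B_{\rho-\delta}(a)$; by continuity of $Q$ this event is bounded by the classical estimate with $\rho$ and $\partial\cDc$ perturbed by $\delta$, giving a contribution of order $\exp(-2(H-\eta/3)/\eps)$. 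Part~(ii) is handled by Lemma~\ref{lm:|X_t-Z_t|>delta}, which yields $\exp(-2\varsigma(\kappa,T,\delta)/\eps)$ with $\varsigma$ made arbitrarily large by shrinking $\kappa$.

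For part~(iii) I would use Assumption~\ref{assu:1} together with compactness of $\overline{\cDc}$ to produce a deterministic time $T_{\mathrm{det}}(\rho)$ such that the noiseless flow $\phi^y$ starting from any $y\in\overline{\cDc}$ enters $B_{\rho/2}(a)$ by time $T_{\mathrm{det}}$. The inclusion $\{\tau_\rho^\prime(X)>T_{\mathrm{det}}\} \subseteq \{\sup_{t\leq T_{\mathrm{det}}}|X_t-\phi^y_t|>\rho/2\}$, combined with a triangle inequality against $Z$, Lemma~\ref{lm:|X_t-Z_t|>delta}, and the standard LDP upper bound for $Z$, would yield a bound $\Prob_y(\tau_\rho^\prime(X)>T_{\mathrm{det}})\leq \exp(-2\alpha/\eps)$ uniform in $y \in \overline{\cDc}$, for some $\alpha=\alpha(\rho)>0$. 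Iterating via the strong Markov property (Remark~\ref{rem:X_Markov_property}) then produces $\Prob(\tau_\rho^\prime(X)>n T_{\mathrm{det}})\leq \exp(-2n\alpha/\eps)$, and setting $T:= n T_{\mathrm{det}}$ with $n := \lceil(H+\eta)/\alpha\rceil$ makes this tail dominated by the earlier contribution.

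The main obstacle is the uniform-in-$y$ control required by the iteration in part~(iii): for $y$ close to $\partial\cDc$, the local dynamics of $X$ differ substantially from those deep inside $\cDc$, and one must lean on compactness of $\overline{\cDc}$ together with Assumption~\ref{assu:2} (inward-pointing drift on $\partial\cDc$, so that the flow stays strictly inside $\cDc$ for all positive times by positive invariance) to ensure that the constant $\alpha$ can be chosen independently of the starting point.
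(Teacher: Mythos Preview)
Your three-way decomposition into (i) good coupling on a finite horizon, (ii) coupling failure, and (iii) long-time tail is exactly the paper's structure. For (i) and (ii) your treatment matches the paper's Step~2: the paper introduces the set $\Phi$ of trajectories starting on $S_{2\rho}(a)$ and reaching a $\rho$-neighborhood of $\partial\cDc$ within $T'$ and bounds $\Prob(Z\in\Phi)$ via Lemma~\ref{lm:Z_in_Phi}, without tracking the avoidance of $B_{\rho-\delta}(a)$ that you include, but this is a cosmetic difference.

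The one substantive difference is your handling of part~(iii). The paper's Step~1 does not iterate via the Markov property; instead it invokes directly the classical result \cite[Lemma~5.7.19]{DZ} (here Lemma~\ref{lm:tau_prim_rho(Z)>t}) that
\[
\lim_{t\to\infty}\limsup_{\eps\to 0}\frac{\eps}{2}\log\sup_{x\in\cDc}\Prob\bigl(\gamma^1_\rho(Z)>t\bigr)=-\infty,
\]
applied to a buffered stopping time $\gamma^1_\rho := \inf\{t: Z_t\in B_{\rho/4}(a)\cup\partial\cDc^e_{3\rho/4}\}$; one then picks a single $T'$ achieving rate $<-H-1$ and transfers it to $X$ by one application of Lemma~\ref{lm:|X_t-Z_t|>delta}. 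This is cleaner because the uniformity in the starting point is already contained in the cited lemma, so the obstacle you flag simply does not arise. Your Markov-iteration alternative is a valid route---in effect a direct reproof of \cite[Lemma~5.7.19]{DZ} at the level of $X$---but it requires the uniform-in-$y$ single-step exponential bound you identify, which rests on a compactness argument for the good rate function that you have correctly pointed to but not fully carried out.
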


Lemma~\ref{lm:tau_prime} is a particular case of the following result of Lemma~\ref{lm:tau_prime_N}. We present them separately to increase readability and since Lemma~\ref{lm:tau_prime} is used for the proof of the exit time result while Lemma~\ref{lm:tau_prime_N} is used for the exit location. Nevertheless, they will be proved together.

\begin{lemma}
\label{lm:tau_prime_N}
    For any closed $N \subset \partial \cDc$, for any $\delta > 0$ and for any $\rho > 0$ small enough there exists $\kappa > 0$ such that
    \begin{equation*}
        \limsup_{\eps \to 0} \frac{\eps}{2} \log \sup_{x \in S_{2 \rho}(a)} \sup_{X \in \X} \Prob (X_{\tau_{\rho}^\prime(X)} \in N) \leq - \inf_{y \in N} Q(y) + \delta.
    \end{equation*}
\end{lemma}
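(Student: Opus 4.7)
The plan is to mimic the classical Freidlin--Wentzell argument for exit location (see, e.g., \cite[Theorem 5.7.11(b)]{DZ}), adapted to the entire family $\X$. Write $V_N := \inf_{y \in N} Q(y)$, fix $\delta > 0$, and organize the proof around three ingredients: a uniform LDP-type upper bound for $\X$ on a fixed time window; a geometric lower bound on the rate function using the quasi-quadratic behaviour of $Q$ near $a$; and a control of the non-exit event $\{\tau_{\rho}^\prime > T\}$ obtained by iterating the strong Markov property, which keeps us inside $\X$ thanks to Remark~\ref{rem:X_Markov_property}.

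For the first ingredient I would show that, for any fixed $T > 0$, any closed $F \subset C([0,T]; \R^d)$, and any $h > 0$,
\begin{equation*}
\limsup_{\eps \to 0} \frac{\eps}{2} \log \sup_{x \in \overline{\cDc}} \sup_{X \in \X} \Prob(X|_{[0,T]} \in F) \leq - \inf_{f(0)=x,\, f \in F} I^x_T(f) + h,
\end{equation*}
as soon as $\kappa$ is small enough. The reduction is
\begin{equation*}
\Prob(X|_{[0,T]} \in F) \leq \Prob(\sup_{t \leq T} |X_t - Z_t| > s) + \Prob(Z|_{[0,T]} \in F^{[s]}),
\end{equation*}
with $F^{[s]}$ the closed $s$-neighbourhood of $F$: the first term is crushed by Lemma~\ref{lm:|X_t-Z_t|>delta} using that $\varsigma(\kappa, T, s) \to \infty$ as $\kappa \to 0$, and the second is the classical LDP upper bound for $Z$. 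For the second ingredient, Assumption~\ref{assu:3} yields a local quadratic bound $Q(x) \leq C_a |x - a|^2$ near $a$; concatenating the optimal quasi-potential path from $a$ to $x$ with the optimal path from $x$ to $y$ gives $Q(y) \leq Q(x) + U(x, y)$, whence $U(x, y) \geq Q(y) - C_a |x - a|^2$. Choosing $\rho$ with $4 C_a \rho^2 \leq \delta/4$ then ensures $\inf_{x \in S_{2\rho}(a),\, y \in N} U(x, y) \geq V_N - \delta/4$.

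For the third ingredient, Assumption~\ref{assu:1} and compactness of $\overline{\cDc}$ provide $T_0 > 0$ such that the deterministic flow from any $y \in \overline{\cDc}$ lies in $B_{\rho/2}(a)$ at time $T_0$. Combining Lemma~\ref{lm:|X_t-Z_t|>delta} (placing $X$ close to $Z$) with the classical LDP lower bound for $Z$ around $\phi$ then gives, for $\kappa$ and $\eps$ small,
\begin{equation*}
\sup_{y \in \overline{\cDc}} \sup_{X \in \X} \Prob(\tau_{\rho}^\prime(X) > T_0) \leq \e^{-2 c_0/\eps}
\end{equation*}
for some $c_0 > 0$. Iterating via the strong Markov property at the instants $T_0, 2T_0, \dots$ (each restarted process stays in $\X$ by Remark~\ref{rem:X_Markov_property}) upgrades this to $\Prob(\tau_{\rho}^\prime > n T_0) \leq \e^{-2 n c_0/\eps}$, and picking $n$ with $n c_0 \geq V_N + \delta$ and $T := n T_0$ produces a finite, $\eps$-independent horizon on which the non-exit probability is already at most $\e^{-2(V_N + \delta)/\eps}$.

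To conclude I would decompose
\begin{equation*}
\Prob(X_{\tau_{\rho}^\prime} \in N) \leq \Prob(\tau_{\rho}^\prime \leq T,\ X_{\tau_{\rho}^\prime} \in N) + \Prob(\tau_{\rho}^\prime > T),
\end{equation*}
apply the uniform upper bound of the first step with $h = \delta/4$ to the closed subset of $C([0,T]; \R^d)$ consisting of paths that hit $N$ before entering $B_\rho(a)$, and use the rate function lower bound $I^x_T(f) \geq U(x, f(\tau_{\rho}^\prime)) \geq V_N - \delta/4$ to obtain $\sup_{X \in \X} \Prob(\tau_{\rho}^\prime \leq T, X_{\tau_{\rho}^\prime} \in N) \leq \e^{-2(V_N - \delta/2)/\eps}$ for small $\kappa$ and $\eps$. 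Adding the two bounds and taking $\tfrac{\eps}{2}\log$ delivers the target rate $-V_N + \delta$. The main obstacle I foresee is making the LDP upper bound genuinely uniform over the whole family $\X$: although the reduction to the LDP for $Z$ is conceptually clean, the final $\kappa$ must be threaded through the parameters $\rho$, $T_0$, $n$ and $h$ so that the successive smallness requirements (closeness of $X$ to $Z$, non-exit control, and the rate function inequality) can all be met simultaneously by a single $\kappa = \kappa(\delta, N)$.
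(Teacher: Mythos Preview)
Your proposal is correct and follows essentially the same route as the paper: decompose $\{X_{\tau'_\rho} \in N\}$ into a non-exit event $\{\tau'_\rho > T\}$ and a hitting event on a finite horizon, reduce both to statements about $Z$ via Lemma~\ref{lm:|X_t-Z_t|>delta}, and control the hitting event through the LDP for $Z$ combined with the quasi-potential triangle inequality $U(x,y) \geq Q(y) - Q(x)$. The only noteworthy difference is in the non-exit step: the paper obtains a single horizon $T'$ with arbitrarily large exponential rate by quoting Lemma~\ref{lm:tau_prim_rho(Z)>t} (i.e., \cite[Lemma~5.7.19]{DZ}) and then transferring it to $X$, whereas you manufacture the large rate by deriving a one-step bound on a short window $T_0$ and iterating via Remark~\ref{rem:X_Markov_property}---both are valid, the paper's is just shorter because the iteration is already packaged in the cited lemma.
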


The following result is similar to Lemmas~\ref{lm:tau_prime} and \ref{lm:tau_prime_N}. The main difference here is that the process $X$ starts within a fixed compact subset of $\cDc$, rather than in a small neighborhood around $a$. Nevertheless, since $\cDc$ is the domain of attraction, the process $X$ will, with high probability, still approach $a$ before exiting $\cDc$. The trade-off is that, in Lemmas~\ref{lm:tau_prime} and \ref{lm:tau_prime_N}, the probability decreases exponentially fast as $\eps \to 0$, whereas in the following result, we only assert that this probability tends to zero, without specifying the rate.

\begin{lemma}
\label{lm:tau_prime=tau}
For any $K \subset \cDc$ and for any $\rho > 0$ small enough there exists $\kappa > 0$, such that:
    \begin{equation*}
        \sup_{x \in K}\sup_{X \in \X} \Prob(\tau^\prime_{\rho}(X) = \tau(X)) = \sup_{x \in K}\sup_{X \in \X} \Prob(X_{\tau^\prime_{\rho}(X)} \in \partial \cDc) \xrightarrow[\eps \to 0]{} 0.
    \end{equation*}
\end{lemma}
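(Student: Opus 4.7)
The plan is to exploit the fact that, under Assumptions~\ref{assu:1}--\ref{assu:3}, $\cDc$ is a basin of attraction for the deterministic flow $\phi$ from~\eqref{eq:def_det_flow}, and to show that $X$ follows $\phi$ closely enough on a deterministic time window $[0, T]$ to enter $B_\rho(a)$ before touching $\partial \cDc$. The argument combines a purely deterministic control of $\phi$ with two stochastic comparisons: $X \approx Z$, from Lemma~\ref{lm:|X_t-Z_t|>delta}, and $Z \approx \phi$, from a classical small-noise Gronwall estimate. Throughout I assume $\rho$ is small enough that $\overline{B_\rho(a)} \subset \cDc$.

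First, a standard dynamical-systems argument (continuous dependence on the initial condition, compactness of $K$, and local exponential stability of $a$ granted by Assumption~\ref{assu:3}) produces a finite $T = T(K, \rho) > 0$ such that $\phi_T^x \in B_{\rho/3}(a)$ for every $x \in K$. The trajectory set $\cCc_T := \{\phi_t^x : x \in K,\ t \in [0, T]\}$ is then compact and contained in the open set $\cDc$, so $c := d(\cCc_T, \partial \cDc) > 0$. I then set $\delta := \tfrac{1}{2}\min(\rho/3, c/2)$, which ensures $2\delta < c$ and $2\delta + \rho/3 < \rho$.

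Next I would combine two controls on $[0, T]$. A classical Gronwall argument applied to $Z_t - \phi_t^x$, together with Doob's inequality applied to the martingale $\sqrt{\eps}\int_0^\cdot \diff(Z_s)\dd{W_s}$, yields $\sup_{x \in K} \Prob\bigl(\sup_{t \in [0,T]} |Z_t - \phi_t^x| > \delta\bigr) \to 0$ as $\eps \to 0$. Lemma~\ref{lm:|X_t-Z_t|>delta}, applied with those $T$ and $\delta$ and with $\kappa$ small enough that $\varsigma(\kappa, T, \delta) > 0$ (which is possible, since $\log(\kappa^2/(\kappa^2 + C^2 \delta^2)) \to -\infty$ as $\kappa \to 0$), forces $\sup_{x \in \cDc}\sup_{X \in \X} \Prob(\sup_{t \in [0,T]} |X_t - Z_t| > \delta) \to 0$ (in fact exponentially fast). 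The triangle inequality then gives
\begin{equation*}
    \sup_{x \in K}\sup_{X \in \X} \Prob\Bigl(\sup_{t \in [0,T]} |X_t - \phi_t^x| > 2\delta\Bigr) \xrightarrow[\eps \to 0]{} 0.
\end{equation*}

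To conclude, on the complement of the event above $X_t$ stays within distance $2\delta < c$ of $\cCc_T \subset \cDc$ for every $t \in [0, T]$, hence avoids $\partial \cDc$ on that interval, and $|X_T - a| \leq 2\delta + \rho/3 < \rho$, so $X_T \in B_\rho(a)$. Thus $\tau_\rho^\prime(X) \leq T$ and $X_{\tau_\rho^\prime(X)} \in \overline{B_\rho(a)} \subset \cDc$, so in particular $X_{\tau_\rho^\prime(X)} \notin \partial \cDc$, which establishes the claim. The main (relatively minor) obstacle is keeping every estimate uniform in $x \in K$ and in $X \in \X$: the former is handled by compactness of $K$ together with the continuous dependence of $\phi$ on its initial point, and the latter is built directly into Lemma~\ref{lm:|X_t-Z_t|>delta}, whose constants are uniform over $\X$.
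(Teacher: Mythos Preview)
Your proposal is correct and follows essentially the same route as the paper's proof: both pick a deterministic time $T$ by which the flow $\phi^x$ from any $x\in K$ has entered a small ball around $a$ while staying inside $\cDc$, then combine the $Z\approx\phi$ estimate (your Gronwall/Doob argument is exactly the content of the paper's Lemma~\ref{lm:Z_phi_control}) with the $X\approx Z$ estimate of Lemma~\ref{lm:|X_t-Z_t|>delta} to conclude via the triangle inequality. The only differences are cosmetic choices of constants ($\rho/3$ versus the paper's $\rho/4$, etc.).
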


The last lemma establishes that the process $X$ remains close to its starting point $x \in \cDc$ over a short time interval. Specifically, for any predetermined distance $r > 0$, there exists a small positive time $T_2$ such that the probability that $X$ moves more than $r$ away from its starting point $x$ within time $T_2$ tends to zero exponentially fast when $\eps \to 0$. Furthermore, this convergence is uniform over $x \in \cDc$. Consider the following lemma.

\begin{lemma}
\label{lm:X_small_dev_from_x}
    For any $\eta, \rho > 0$  there exist $\kappa > 0$ and a positive time $T_2$ such that 
    \begin{equation*}
        \limsup_{\eps \to 0} \frac{\eps}{2} \log \sup_{x \in \cDc} \sup_{X \in \X} \Prob \left(\sup_{t \leq T_2} |X_t - x| \geq \rho/2 \right) \leq -H - 2\eta.
    \end{equation*}
\end{lemma}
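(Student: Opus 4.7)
\textbf{Proof proposal for Lemma~\ref{lm:X_small_dev_from_x}.} The plan is to split the displacement $X_t - x$ into its drift and martingale parts, control the drift trivially by taking $T_2$ small, and then apply a standard exponential martingale inequality to the stochastic integral. The parameter $\kappa$ will play no essential role beyond providing a uniform bound on $\Diff$ and $\Drift$: I will simply fix, say, $\kappa=1$, and choose $T_2$ in terms of $\eta$, $\rho$, $\cDc$, $\drift$, $\diff$.

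First, since $\cDc$ is bounded, the closed $\rho$-neighbourhood $\cDc':=\{y\in\R^d: d(y,\cDc)\le\rho\}$ is compact, and by continuity of $\drift$ and $\diff$ we may set
\begin{equation*}
    B_0 := \sup_{y\in\cDc'}|\drift(y)| + \kappa, \qquad \Lambda := \sup_{y\in\cDc'}\|\diff(y)\| + \kappa,
\end{equation*}
so that $|\Drift(s,y)|\le B_0$ and $\|\Diff(s,y)\|\le\Lambda$ for all $s\ge 0$ and all $y\in\cDc'$, uniformly over $X\in\X(\eps,\kappa,x)$. Introduce the stopping time $\sigma:=\inf\{t\ge 0:|X_t-x|\ge\rho/2\}$. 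For any $x\in\cDc$, on $[0,\sigma\wedge T_2]$ the process stays inside $B_{\rho/2}(x)\subset\cDc'$, hence both coefficients are bounded by the constants above on this interval.

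Writing $X_t - x = A_t + M_t$ with $A_t:=\int_0^t \Drift(s,X_s)\,\dd s$ and $M_t:=\sqrt\eps\int_0^t\Diff(s,X_s)\,\dd W_s$, the event $\{\sup_{t\le T_2}|X_t-x|\ge\rho/2\}$ coincides with $\{\sigma\le T_2\}$ and on it satisfies $|A_{\sigma}|+|M_{\sigma}|\ge\rho/2$. Choose $T_2\le\rho/(4B_0)$ so that $|A_{\sigma\wedge T_2}|\le B_0 T_2\le\rho/4$; then necessarily $\sup_{t\le\sigma\wedge T_2}|M_t|\ge\rho/4$. The stopped martingale $M^{\sigma\wedge T_2}$ has quadratic variation bounded deterministically by $\eps\Lambda^2 T_2$, so the standard exponential (Bernstein-type) inequality for continuous local martingales with bounded bracket yields, componentwise,
\begin{equation*}
    \sup_{x\in\cDc}\sup_{X\in\X}\Prob\Bigl(\sup_{t\le T_2}|X_t-x|\ge\rho/2\Bigr)\;\le\; 2d\exp\!\Bigl(-\frac{\rho^2}{32\,d\,\eps\,\Lambda^2\,T_2}\Bigr).
\end{equation*}

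Applying $\frac\eps2\log$ and letting $\eps\to 0$ gives
\begin{equation*}
    \limsup_{\eps\to 0}\frac\eps2\log\sup_{x\in\cDc}\sup_{X\in\X}\Prob\Bigl(\sup_{t\le T_2}|X_t-x|\ge\rho/2\Bigr)\;\le\;-\frac{\rho^2}{64\,d\,\Lambda^2\,T_2}.
\end{equation*}
It therefore suffices to pick $T_2:=\min\!\bigl(\tfrac{\rho}{4B_0},\tfrac{\rho^2}{128\,d\,\Lambda^2(H+2\eta)}\bigr)$ to obtain the required bound $-H-2\eta$. The main (mild) obstacle is ensuring global boundedness of $\Drift$ and $\Diff$, which is not guaranteed by the definition of $\X$ on all of $\R^d$; this is circumvented by localising through the stopping time $\sigma$, after which only the values of the coefficients in the compact set $\cDc'$ matter, and the estimate is automatically uniform in $x\in\cDc$ and $X\in\X$.
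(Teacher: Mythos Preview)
Your proof is correct and takes a genuinely different route from the paper. The paper proves the lemma by first invoking the corresponding estimate for the homogeneous diffusion $Z$ (Lemma~\ref{lm:|Z_t-x|>r}, i.e.\ \cite[Lemma~5.7.23]{DZ}) to find $T_2$ with $\limsup_{\eps\to 0}\frac{\eps}{2}\log\sup_{x\in\cDc}\Prob(\sup_{t\le T_2}|Z_t-x|\ge\rho/4)\le -(H+2\eta)$, and then transfers this to $X$ via the comparison Lemma~\ref{lm:|X_t-Z_t|>delta}, which forces the choice of a sufficiently small $\kappa$ so that $\varsigma(\kappa,T_2,\rho/4)>H+2\eta$. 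Your argument instead attacks $X$ directly: localise by the first exit time $\sigma$ from $B_{\rho/2}(x)$, kill the drift by taking $T_2\le\rho/(4B_0)$, and apply a Bernstein-type exponential inequality to the stopped martingale whose bracket is deterministically bounded by $\eps\Lambda^2 T_2$. This is more elementary and self-contained---it bypasses both the LDP result for $Z$ and the comparison lemma---and it shows that here $\kappa$ is irrelevant: any fixed value (you take $\kappa=1$) works, and the bound only improves for smaller $\kappa$ since $B_0$ and $\Lambda$ decrease. The paper's approach, on the other hand, is more modular, reusing the same ``compare to $Z$ via Lemma~\ref{lm:|X_t-Z_t|>delta}'' template that drives Lemmas~\ref{lm:P_tau_D_lower_bound}--\ref{lm:tau_prime=tau}, at the cost of an unnecessary dependence on $\kappa$ in this particular lemma.
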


Now we are ready to present the proof of Theorem~\ref{thm:main_result}.

\subsection{Proof of Theorem~\ref{thm:main_result}}

   Before proceeding with the proof, we clarify the choice of certain parameters used throughout. We recall that additionally to the objects defined by the assumptions of the paper, we are also given a compact $K \subset \cDc$ and a closed set $N \in \partial \cDc$ by the statement of the proposition. Fix any $\eta> 0$ and choose $\delta > 0$ such that
   \begin{equation}
       \label{eq:aux:choice_of_delta}
       \inf_{y \in N} Q(y) - \delta > H + \eta + \delta.
   \end{equation}
   That is possible by the assumption on the set $N$ provided in the proposition. Fix $\rho > 0$ small enough as required by Lemmas~\ref{lm:tau_prime}, \ref{lm:tau_prime_N}, and \ref{lm:tau_prime=tau}.
   
   After that, choose $\kappa$ as the minimum of the $\kappa$'s defined in Lemmas~\ref{lm:P_tau_D_lower_bound}--\ref{lm:X_small_dev_from_x}. Since each lemma remains valid when $\kappa$ is reduced, this choice ensures that all the lemmas hold simultaneously. With this being said, we are now ready to start the proof of the proposition.
    
    \textit{Upper bound in $\Prob$.} Define $\displaystyle q := \inf_{x \in \cDc} \inf_{X \in \X} \Prob (\tau(X) \leq T_1)$, where $T_1$ is the positive time defined in Lemma~\ref{lm:P_tau_D_lower_bound}. According to this lemma, we have
    \begin{equation}
    \label{aux:eq:q_lower_bound}
        q \geq \exp{-\frac{2(H + \frac{\eta}{2})}{\eps}}.
    \end{equation}
    By Remark~\ref{rem:X_Markov_property}, we can use the Markov property of the processes $X \in \X$ and get:
    \begin{equation*}
    \begin{aligned}
        \Prob (\tau(X) > (k + 1)T_1) &= \Big[ 1 - \Prob(\tau(X) \leq (k + 1)T_1 \;|\; \tau(X) > k T_1)\Big] \Prob (\tau(X) > k T_1) \\
        & \leq (1 - q) \, \Prob (\tau(X) > k T_1).
    \end{aligned}
    \end{equation*}
    Therefore, by induction on $k$, we get the following bound:
    \begin{equation*}
        \sup_{x \in \cDc} \sup_{X \in \X} \Prob (\tau(X) > k T_1) \leq (1 - q)^k.
    \end{equation*}
    For any positive random variable $\xi$, we can estimate its expectation by a simple formula $\ds \E[\xi] \leq T_1 \left(1 + \sum_{k = 1}^\infty \Prob(\xi > kT_1) \right)$. In the case of $\tau(X)$, that gives us:
    \begin{equation*}
        \sup_{x \in \cDc} \sup_{X \in \X} \E[\tau(X)] \leq T_1 \Big(1 + \sum_{k = 1}^\infty \sup_{x \in \cDc} \sup_{X \in \X} \Prob (\tau(X) > k T_1) \Big) \leq T_1 \sum_{k = 0}^\infty (1 - q)^k = \frac{T_1}{q}.
    \end{equation*}
    Using this upper bound and the inequality \eqref{aux:eq:q_lower_bound}, we finally get the following upper bound
    \begin{equation}
    \label{eq:aux:E_tau_control}
         \sup_{x \in \cDc} \sup_{X \in \X} \E[\tau(X)] \leq T_1 \exp{\frac{2(H + \frac{\eta}{2})}{\eps}},
    \end{equation}
    which, by Markov's inequality, gives us:
    \begin{equation*}
        \sup_{x \in \cDc} \sup_{X \in \X}\Prob \left(\tau(X) > \exp{\frac{2(H + \eta)}{\eps}} \right) \leq \e^{-\frac{2(H + \eta)}{\eps}} \sup_{x \in \cDc} \sup_{X \in \X} \E [\tau(X)] \leq T_1 \e^{- \eta/\eps},
    \end{equation*}
    which tends to 0 when $\eps \to 0$, which proves the upper bound.

    \textit{Lower bound in $\Prob$.} To prove the lower bound, let us define the following stopping times (see Fig.~\ref{fig:theta_i}). Let $\theta_0 = 0$ and define for $i = 1, 2, \dots$:
    \begin{equation*}
        \begin{aligned}
            \theta_{2i - 1} &:= \inf\{t \geq \theta_{2i - 2}: X_t \in B_{\rho}(a) \cup \partial \cDc\}, \\
            \theta_{2i} &:= 
            \begin{cases}
                \inf \{t \geq \theta_{2i - 1}: X_t \in S_{2 \rho}(a)\}, \text{ if } X_{\theta_{2i - 1}} \in B_{\rho}(a); \\
                \infty, \text{ if } X_{\theta_{2i - 1}} \in \partial \cDc.
            \end{cases}
        \end{aligned}
    \end{equation*}

    \begin{figure}[t]
    \centering
    \includegraphics{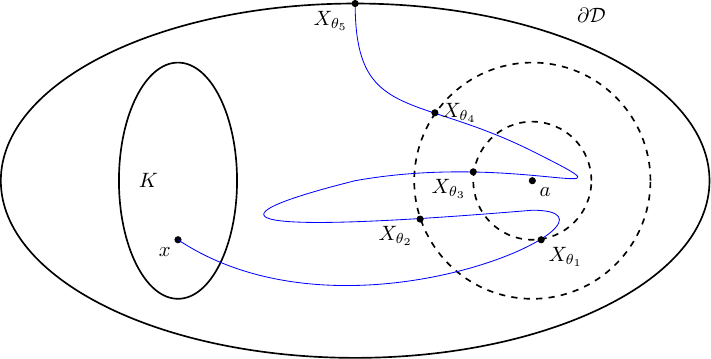}

    \caption{An illustration of the stopping times $\theta_i$.}
    \label{fig:theta_i}
    \end{figure}

Note that $\{X_{\theta_i}\}_{i \geq 0}$ is a sequence of random variables that take their values on $B_{\rho}(a) \cup S_{2 \rho}(a) \cup \partial\cDc$ and $\Prob(\exists i: \theta_{2i - 1} = \tau(X)) = 1$. Moreover, using Lemmas~\ref{lm:tau_prime}--\ref{lm:X_small_dev_from_x}, we can estimate the time-length of the transitions. First, by Lemma~\ref{lm:tau_prime=tau}, we have
\begin{equation}
\label{eq:aux:First}
    \sup_{x \in K}\sup_{X \in \X} \Prob (\theta_1 = \tau(X)) \xrightarrow[\eps \to 0]{} 0.
\end{equation}

Second, for $\theta_{2i - 1}$, with $i \geq 2$, we shall use a different approach. Use Remark~\ref{rem:X_Markov_property}, that is the strong Markov property of the processes $X \in \X$ and Lemma~\ref{lm:tau_prime}, and get
\begin{equation}
\label{eq:aux:Second}
    \sup_{x \in \cDc} \sup_{X \in \X} \Prob (\theta_{2i - 1} = \tau(X)) \leq \sup_{x \in S_{2\rho}(a)} \sup_{X \in \X} \Prob (X_{\tau^\prime_{\rho}(X)} \in \partial \cDc) \leq \e^{\frac{-2(H - \frac{\eta}{2})}{\eps}},
\end{equation}
for $\eps$ small enough. Third, note that for any $i \geq 1$, by Lemma \ref{lm:X_small_dev_from_x}, we get:
\begin{equation}
\label{eq:aux:Third}
    \sup_{x \in \cDc} \sup_{X \in \X}  \Prob (\theta_{2i} - \theta_{2i - 1} \leq T_2) \leq  \sup_{x \in B_{\rho}(a)} \sup_{X \in \X}  \Prob (\sup_{t \leq T_2} |X_t - x| \geq \rho/2) \leq \e^{\frac{-2(H + 2\eta)}{\eps}},
\end{equation}
for $\eps$ small enough.
    
Note that if the event $\{\tau(X) \leq kT_2 \}$ holds true, then either $\{\theta_{2i - 1} = \tau(X)\}$ occurs for some $1 \leq i \leq k$ or one of the intervals $\theta_{2i} - \theta_{2i - 1}$ is less or equal to $T_2$. In particular, we have the following upper bound for any $x \in K$ and for any $X \in \X = \X(\eps, \kappa, x)$:
\begin{equation*}
    \Prob (\tau(X) \leq kT_2)  \leq \sum_{i = 1}^{k} \Prob (\theta_{2i - 1} = \tau(X)) + \Prob \left(\min_{i \leq k} (\theta_{2i} - \theta_{2i - 1}) \leq T_2\right). 
\end{equation*}
Using Remark~\ref{rem:X_Markov_property} and \eqref{eq:aux:Second}, we get that $\ds \Prob \left(\min_{i \leq k} (\theta_{2i} - \theta_{2i - 1}) \leq T_2\right) \leq k \e^{-\frac{2(H + 2\eta)}{\eps}}$, thus giving us:

\begin{equation*}    
    \Prob (\tau(X) \leq kT_2) \leq \Prob (\theta_{1} = \tau(X)) + 2k \exp{-\frac{2(H - \eta/2)}{\eps}}.
\end{equation*}

We can apply this inequality for $k = \left\lfloor\exp{\frac{2(H - \eta)}{\eps}}/T_2 \right\rfloor + 1$ and get
\begin{equation*}
\begin{aligned}
    \sup_{X \in \X} \Prob \left( \tau(X) \leq \exp{\frac{2(H - \eta)}{\eps}} \right) & \leq \sup_{X \in \X} \Prob (\theta_1 = \tau(X)) + 4\e^{-\eta/\eps}/T_2,
\end{aligned}
\end{equation*}
which tends to zero by \eqref{eq:aux:First}.

\textit{Upper and lower bounds in $L_1$.} The second result of the proposition is a simple consequence of the previous proof. Indeed, for the fixed above $\eta$ and $\kappa$, Equation~\eqref{eq:aux:E_tau_control} gives:
\begin{equation*}
    \frac{\eps}{2}\log \sup_{x \in K} \sup_{X \in \X} \E[\tau(X)] \leq \frac{\eps}{2}\log(T_1) + \left(H + \eta \right) \xrightarrow[\eps \to 0]{} H + \eta.
\end{equation*}

For the lower bound, we will use Markov's inequality and get:
\begin{equation*}
    \inf_{x \in K}\inf_{X \in \X} \E [\tau(X)] \geq \exp{\frac{2(H - \eta)}{\eps}} \inf_{x \in K}\inf_{X \in \X} \Prob\left(\exp{\frac{2(H - \eta)}{\eps}} \leq \tau(X)\right),
\end{equation*}
which gives us
\begin{equation*}
\begin{aligned}
    \frac{\eps}{2} \log \inf_{x \in K}\inf_{X \in \X} \E [\tau(X)] &\geq (H - \eta) + \frac{\eps}{2} \log \inf_{x \in K}\inf_{X \in \X} \Prob\left(\exp{\frac{2(H - \eta)}{\eps}} \leq \tau(X)\right) \\
    &\xrightarrow[\eps \to 0]{} (H - \eta).    
\end{aligned}
\end{equation*}

\textit{Exit location.} The proof of this statement is based on the estimate already obtained for the exit time $\tau(X)$ and Lemma~\ref{lm:tau_prime_N} that suggests that the probability to exit through the set $N$ is exponentially smaller than that from $\partial \cDc$ in general. As we know, due to the estimate on the exit time, the probability that $\tau(X) > \exp{\frac{2(H + \eta)}{\eps}}$ tends to zero as $\eps \to 0$. Thus, consider:
\begin{equation*}
    \Prob(X_{\tau(X)} \in N) \leq \Prob\left(X_{\tau(X)} \in N, \tau(X) \leq \e^{\frac{2(H + \eta)}{\eps}}\right) + \Prob\left(\tau(X) > \e^{\frac{2(H + \eta)}{\eps}}\right).
\end{equation*}

Similarly to the upper bound, for any integer $k$, we can separate the first event the following way.
\begin{equation*}
    \{X_{\tau(X)} \in N, \tau(X) \leq \e^{\frac{2(H + \eta)}{\eps}}\} \subset \bigcup_{i = 1}^{k}\{X_{\theta_{2i - 1}} \in N\} \cup \{\theta_{2k} \leq \tau(X) \leq \e^{\frac{2(H +\eta)}{\eps}}\}.
\end{equation*}

The probability of the event on the right can be expressed as:
\begin{equation*}
\begin{aligned}
    \Prob(\theta_{2k} \leq \e^{\frac{2(H +\eta)}{\eps}}) &\leq \Prob\left(\min_{i \leq k} \left(\theta_{2i } - \theta_{2i - 1} \right) \leq \e^{\frac{2(H +\eta)}{\eps}}/k \right) \\
    &\leq \sum_{i = 1}^k \Prob\left(\theta_{2i } - \theta_{2i - 1} \leq \e^{\frac{2(H +\eta)}{\eps}}/k \right).     
\end{aligned}
\end{equation*}
Indeed, if all the intervals of the form $\theta_{2i} - \theta_{2i - 1}$ were greater than $\e^{\frac{2(H +\eta)}{\eps}}/k$ then $\theta_{2k}$ would be also greater than $\e^{\frac{2(H +\eta)}{\eps}}$. Remark~\ref{rem:X_Markov_property} allows us to use the Markov property and conclude that
\begin{equation*}
\begin{aligned}
    \Prob \left(\theta_{2k} < \e^{\frac{2(H +\eta)}{\eps}} \right) &\leq k \sup_{x \in B_{\rho}(a)} \sup_{X \in \X} \Prob(\theta_2 < \e^{\frac{2(H +\eta)}{\eps}}/k)\\
    &\leq k \sup_{x \in B_{\rho}(a)} \sup_{X \in \X} \Prob(\sup_{t \leq \e^{\frac{2(H +\eta)}{\eps}}/k} |X_t - x| > \rho/2).    
\end{aligned}
\end{equation*}

Similarly, using Remark~\ref{rem:X_Markov_property}, for $k > 1$, we get:
\begin{equation*}
\begin{aligned}
    \sup_{x \in K} \sup_{X \in \X} \Prob(\bigcup_{i = 1}^{k}\{X_{\theta_{2i - 1}} \in N\} ) &\leq \sup_{x \in K}\sup_{X \in \X} \Prob(X_{\tau^\prime_\rho(X)} \in \partial \cDc) \\
    & \quad + (k - 1) \sup_{x \in S_{2\rho}(a)}\sup_{X \in \X} \Prob(X_{\tau^\prime_\rho(X)} \in N).    
\end{aligned}
\end{equation*}
By Lemmas~\ref{lm:tau_prime_N} and \ref{lm:X_small_dev_from_x}, and by our choice of $\delta$, we get the following estimate:
\begin{equation*}
\begin{aligned}
    \sup_{x \in K} \sup_{X \in \X} \Prob(\bigcup_{i = 1}^{k}\{X_{\theta_{2i - 1}} \in N\} ) &\leq \alpha(\eps) + (k -1) \exp{-\frac{2(\inf_{y \in N} Q(y) - \delta)}{\eps}} \\
    &\leq \alpha(\eps) + (k -1) \exp{-\frac{2(H + \eta + \delta)}{\eps}},    
\end{aligned}
\end{equation*}
where $\alpha(\eps) \xrightarrow[\eps \to 0]{} 0$.

Combining the  estimates, we finally get:
\begin{equation*}
\begin{aligned}
    \Prob(X_{\tau(X)} \in N) &\leq \alpha(\eps) + k  \sup_{x \in B_{\rho}(a)} \sup_{X \in \X} \Prob \left(\sup_{t \leq \e^{\frac{2(H +\eta)}{\eps}}/k} |X_t - x| > \rho/2 \right) \\   
    & + (k -1) \exp{-\frac{2(H + \eta + \delta)}{\eps}}.
\end{aligned}
\end{equation*}
Take $k = \left\lfloor \frac{1}{T_2} \exp{\frac{2(H + \eta)}{\eps}} \right\rfloor$. This makes sure that $\exp{\frac{2(H + \eta)}{2}}/k \leq T_2$, which allows us to use Lemma~\ref{lm:X_small_dev_from_x}. We thus get:
\begin{equation*}
    \Prob(X_{\tau(X)} \in N) \leq \alpha(\eps) + \frac{3}{T_2} \e^{-2\delta/\eps} \xrightarrow[\eps \to 0]{} 0, 
\end{equation*}
which ends the proof.

\subsection{Proofs of auxiliary lemmas}

Before proving the lemmas, we provide some remarks. Note that for any $\rho > 0$, we can define $\cDc^e_{\rho} \supset \cDc$ \label{def:D_enlargement} as an enlargement of $\cDc$ that satisfies Assumptions~\ref{assu:1}, \ref{assu:2}, and \ref{assu:4}, and ensures $d(\cDc^e_{\rho}, \cDc) \geq \rho$. Additionally, let us denote $H^e_{\rho} := \inf\{V(a, z): z \in \cDc^e_{\rho}\}$ as the height of the quasipotential inside the set $\cDc^e_{\rho}$. Finally, note that the results concerning the process $Z$ used in the following computations are presented in the Appendix on page~\pageref{s:Appendix}. With these remarks in place, we are now ready to prove the auxiliary results.

\begin{proof}[Proof of Lemma~\ref{lm:|X_t-Z_t|>delta}]
    The following result {\cite[Lemma 5.6.18]{DZ}} states that, for any Itô process $D$ of the form
    \begin{equation*}
        \dd{D_t} = \sqrt{\eps} \diff_t \dd{W_t} + \drift_t \dd{t},
    \end{equation*}
    where $\drift$ is a progressively measurable processes and $\diff$ is progressively measurable and bounded, such that, for some positive constants $C_1$, $C_2$, and $\rho$, we have 
    \begin{equation*}
        \begin{aligned}
            |\diff_t| & \leq C_1 (\rho^2 + |D_t|^2)^{1/2}, \\
            |\drift_t| & \leq C_2 (\rho^2 + |D_t|^2)^{1/2}
        \end{aligned}
    \end{equation*}

    Then for any $\delta > 0$, positive time $T$ and for any $\eps < 1$, we have
    \begin{equation*}
        \frac{\eps}{2} \log \Prob (\sup_{t \leq T} |D_t| \geq \delta) \leq K + \log\left(\frac{\rho^2 + |D_0|^2}{\rho^2 + \delta^2}\right),
    \end{equation*}
    where $K = 2C_2 + C_1^2(2 + d)$, which, importantly, does not depend on $D$ itself.

    In our case, $D_t = X_t - Z_t$ and, therefore
    \begin{equation*}
        \dd{D_t} = \sqrt{\eps} \left(\Diff(t, X_t) - \diff(Z_t)\right) \dd{W_t} + \left(\Drift(t, X_t) - \drift(Z_t)\right) \dd{t}.
    \end{equation*}
    
    The conditions on $\diff$ and $\drift$ are also satisfied. Consider e.g. $\diff_t$
    \begin{equation*}
        \left\|\Diff(t, X_t) - \diff(Z_t)\right\| \leq \left\|\Diff(t, X_t) - \diff(X_t) + \diff(X_t) - \diff(Z_t)\right\| \leq \kappa + C|X_t - Z_t|,
    \end{equation*}
    where we get the last inequality by the definition of $\X$ and the Lipshitz condition on $\diff$. Therefore,
        \begin{equation*}
        \left\|\Diff(t, X_t) - \diff(Z_t)\right\| \leq \sqrt{2}C\left((\kappa/C)^2 + |X_t - Z_t|^2 \right)^{1/2} = \sqrt{2}C\left((\kappa/C)^2 + |D_t|^2 \right)^{1/2}.
    \end{equation*}
    The same holds for $\drift_t = \Drift(t, X_t) - \drift(Z_t)$. Now we just have to use the result for $D$ that is independent of the choice of $x \in \cDc$ and $X \in X$ and since $D_0 = 0$ a.s., we get:
    \begin{equation*}
        \frac{\eps}{2} \log \sup_{x \in \cDc} \sup_{X \in \X} \Prob(\sup_{t \leq T} |X_t - Z_t| \geq \delta) \leq K + \log\left(\frac{\kappa^2}{\kappa^2 + C^2\delta^2}\right),
    \end{equation*}
    which shows the result of the lemma by renaming the constants.
\end{proof}

\begin{proof}[Proof of Lemma~\ref{lm:P_tau_D_lower_bound}]
    Recall that $\cDc^e_{\rho} \supset \cDc$ is an enlargement of $\cDc$ that satisfies Assumptions~\ref{assu:1}, \ref{assu:2}, and \ref{assu:4}, and $d(\cDc^e_{\rho}, \cDc) \geq \rho$. $H^e_{\rho} := \inf\{V(a, z): z \in \cDc^e_{\rho}\}$ denotes the height of the  quasipotential inside the set $\cDc^e_{\rho}$. Since the quasipotential $Q$ is continuous, we can fix $\rho > 0$ as a small enough constant such that $H^e_\rho < H + \frac{\eta}{8}$.
    
    For any continuous $\cFc_t$-adapted process $X$, define the stopping time $\tau^e_{\rho}(X) := \inf\{t \geq 0: X_t \notin \cDc^{e}_{\rho}\}$. By Lemma~\ref{lm:tau(Z)<T_lower_bound}, for any $\rho > 0$, there exists $T_1 > 0$ such that we have
    \begin{equation}
    \label{eq:aux:tau_Z_bound}
        \inf_{x \in \cDc}\Prob \!\left(\tau^e_{\rho}(Z) \leq T_1 \right) \geq \exp{-\frac{2(H^e_{\rho} + \frac{\eta}{8})}{\eps}} \geq \exp{-\frac{2(H + \frac{\eta}{4})}{\eps}}.
    \end{equation}

    As the next step, note that for each $x \in \cDc$ and for each $X \in \X$, the following inclusion of events holds $\displaystyle \{\tau^e_{\rho}(Z) \leq T_1, \sup_{t \in [0, T_1]}|X_t - Z_t| \leq \rho/2\} \subseteq \{\tau(X) \leq T_1\}$. Consider the following computations:
    \begin{equation*}
    \begin{aligned}
        \inf_{x \in \cDc} \inf_{X \in \X}  \Prob (\tau(X) \leq T_1) &\geq \inf_{x \in \cDc} \inf_{X \in \X} \Prob \left(\tau^e_{\rho}(Z) \leq T_1, \sup_{t \in [0, T_1]} |X_t - Z_t| \leq \rho/2 \right) \\
        & \geq  \inf_{x \in \cDc} \Prob (\tau^e_{\rho}(Z) \leq T_1) - \sup_{x \in \cDc} \sup_{X \in \X}  \Prob \left(\sup_{t \in [0, T_1]}|X_t - Z_t| > \rho/2 \right) \\
        & \geq \exp{-\frac{2(H + \frac{\eta}{4})}{\eps}} - \exp{\frac{-2 \varsigma(\kappa, T_1, \frac{\rho}{2})}{\eps}},
    \end{aligned}
    \end{equation*}
    
    where the last inequality is obtained from \eqref{eq:aux:tau_Z_bound} and Lemma~\ref{lm:|X_t-Z_t|>delta}. Therefore, for $\eps < 1$, we have:
    \begin{equation*}
        \inf_{x \in \cDc} \inf_{X \in \X}  \Prob (\tau(X) \leq T_1) \geq \e^{-\frac{2(H + \frac{\eta}{4})}{\eps}} \left(1 - \exp{-2(\varsigma(\kappa, T_1, \frac{\rho}{2}) - H - \frac{\eta}{4})} \right).
    \end{equation*}

    Choose $\kappa > 0$ to be small enough such that
    \begin{equation*}
        \exp{-2 \frac{\eta}{4}} \leq \left(1 - \exp{-2(\varsigma(\kappa, T_1, \frac{\rho}{2}) - H - \frac{\eta}{4})} \right),
    \end{equation*}
    which is possible since $\varsigma(\kappa, T_1, \frac{\rho}{2}) \xrightarrow[\kappa \to 0]{} + \infty$. Finally, that gives us
    \begin{equation*}
        \inf_{x \in \cDc} \inf_{X \in \X}  \Prob (\tau(X) \leq T_1) \geq \exp{-\frac{2(H + \eta/4)}{\eps} - \frac{2 \eta}{4}} \geq \exp{-\frac{2(H + \eta/2)}{\eps}},
    \end{equation*}
    since $\eps < 1$. That completes the proof, since $\eps \to 0$.
\end{proof}

\begin{proof}[Proof of Lemmas~\ref{lm:tau_prime} and \ref{lm:tau_prime_N}]

    As was pointed out above, Lemma~\ref{lm:tau_prime} is a particular case of Lemma~\ref{lm:tau_prime_N}. therefore, we will only prove the second one. Fix closed $N \subset \partial \cDc$ and any $\delta > 0$. Consider the set $\overline{N}_\rho := \{x \in \R^d: d(x, N) \leq \rho\}$. Fix $\rho > 0$ small enough so that, first, $S_{2\rho}(a) \subset \cDc$ and, second, $\ds \inf_{y \in \overline{N}_\rho} Q(y) \geq \inf_{y \in N} Q(y) - \frac{\delta}{4}$, which is possible due to continuity of $Q$ on $\partial \cDc$. Moreover, since $Q$ is also continuous around $a$ and $Q(a) = 0$, we have $\ds \lim_{\rho \to 0} \sup_{z \in S_{2\rho}(a)} Q(z) = 0$. Decrease $\rho$ so that $\ds \sup_{z \in S_{2\rho}(a)} Q(z) \leq \frac{\delta}{4}$. The parameter $\kappa$ will be chosen later. We will prove this lemma in two steps.
    
    {\it Step 1.} First, let us prove the following. There exists $T^\prime > 0$ such that for any $\eps > 0$ small enough, we have
    \begin{equation*}
        \sup_{x \in S_{2 \rho}(a)} \sup_{X \in \X} \Prob (\tau^\prime_{\rho}(X) > T^\prime) \leq \e^{-2 H/\eps}.
    \end{equation*}
    
    Indeed, by Lemma~\ref{lm:tau_prim_rho(Z)>t}, there exists a time $T^\prime > 0$ such that for $\gamma^1_\rho := \inf\{t \geq 0: Z_t \in B_{\rho/4}(a) \cup \cDc^e_{3\rho/4}\}$ we have
    \begin{equation}
    \label{eq:aux:tau_prime_T}
        \limsup_{\eps \to 0} \frac{\eps}{2} \log \sup_{x \in S_{2 \rho}(a)} \Prob (\gamma^1_\rho > T^\prime) < -H - 1.
    \end{equation}

    Note that the following inclusion of the events holds: $\{\tau^\prime_{\rho}(X) > T^\prime, \sup_{t \leq T^\prime} |X_t - Z_t| \leq \rho/2\} \subseteq \{\gamma^1_{\rho} > T^\prime \}$. Then we have
    \begin{equation*}
            \Prob (\tau^\prime_{\rho}(X) > T^\prime) \leq \Prob \Big(\sup_{t \leq T^\prime} |X_t - Z_t| > \rho/2 \Big) + \Prob (\gamma^{1}_\rho > T^\prime).
    \end{equation*}

    By Lemma~\ref{lm:|X_t-Z_t|>delta} and Equation \eqref{eq:aux:tau_prime_T}, for any $\eps > 0$ small enough, we have the following upper bound:
    \begin{equation*}
    \begin{aligned}
        \sup_{x \in S_{\rho}(a)} &\sup_{X \in \X} \Prob (\tau^\prime_{\rho}(X) > T^\prime) \leq \exp{-2 \varsigma(\kappa, T^\prime, \frac{\rho}{2})/\eps} + \exp{- 2(H + 1)/\eps} \\
        & \leq \exp{-\frac{2H}{\eps}} \left(\exp{-\frac{2(\varsigma(\kappa, T^\prime, \frac{\rho}{2}) - H)}{\eps}} + \exp{-\frac{2}{\eps}} \right).
    \end{aligned}
    \end{equation*}
    Therefore, we can finally choose $\kappa$ to be small enough such that $\left(\varsigma(\kappa, T^\prime, \frac{\rho}{2}) - H \right) > 1$ and thus:
    \begin{equation*}
          \sup_{x \in S_{2 \rho}(a)} \sup_{X \in \X} \Prob (\tau^\prime_{\rho}(X) > T^\prime) \leq \exp{-\frac{2 H}{\eps}}.
    \end{equation*}

    {\it Step 2.}  To finish the proof, we introduce $\Phi:= \{f \in C[0, T^\prime]: f(0) \in S_{2\rho}(a),  \text{ and } \exists t \leq T^\prime \text{ such that } f(t) \in \overline{N}_\rho \}$. According to Lemma~\ref{lm:Z_in_Phi}, and our choice of $\rho$, we have:
    \begin{equation*}
        \lim_{\eps \to 0} \frac{\eps}{2} \log \sup_{x \in S_{2 \rho}(a)} \Prob(Z \in \Phi) \leq - \inf_{y \in N} Q(y) + \frac{\delta}{2}.
    \end{equation*}
    We use the decomposition of the events to get that, if $X_{\tau^\prime_\rho(X)} \in N$, then either $\tau^\prime_\rho(X) > T^\prime$ or $\ds \sup_{s \leq T^\prime} |Z_s - X_s| > \rho/2$, or, in none of these cases, $\Prob(Z \in \Phi)$. Consider the following inequality: 
    \begin{equation*}
    \begin{aligned}
        \Prob (X_{\tau^\prime_\rho(X)} \in N) &\leq \Prob (\tau_\rho^\prime(X) > T^\prime) + \Prob(Z \in \Phi) +  \Prob (\sup_{s \leq T_\rho^\prime}|Z_s - X_s| > \rho/2).
    \end{aligned}
    \end{equation*}

    By Lemma~\ref{lm:|X_t-Z_t|>delta}, we have
    \begin{equation*}
        \Prob \left(\sup_{s \leq T_\rho^\prime}|Z_s - X_s| > \rho/2 \right) \leq \exp{-2 \varsigma(\kappa, T^\prime, \frac{\rho}{2})/\eps}.
    \end{equation*}
    Choose $\kappa > 0$ small enough so that $\ds \varsigma(\kappa, T^\prime, \frac{\rho}{2}) > \inf_{y \in N} Q(y) - \frac{\delta}{2}$. This gives us the following upper bound for small enough $\eps$:
    \begin{equation*}
         \sup_{x \in S_{2 \rho}(a)} \sup_{X \in \X} \Prob (X_{\tau_{\rho}^\prime(X)} \in N) \leq 3 \exp{\frac{2(-\inf_{y \in N} Q(y) + \delta/2)}{\eps}}, 
    \end{equation*}
    thus proving the lemma.

\end{proof}

\begin{proof}[Proof of Lemma~\ref{lm:tau_prime=tau}]
    Note that, by our assumptions, there exists a uniform in $x \in K$ upper bound for the time of convergence of the dynamical system $\dot{\phi}_t = b(\phi_t)$ inside $B_{\rho/4}(a)$. Moreover, the trajectories of $\phi$ do not leave from $\cDc$. Rigorously, there exists a positive time $T$, such that for any $x \in K$, we have $\phi_{T} 
    \in B_{\rho/4}(a)$ and also
    \begin{equation*}
        \bigcup_{x \in K} \{\phi_t\}_{t \leq T} \subset \cDc.
    \end{equation*}
    Let us fix $\delta < \inf_{x \in K} d(\partial \cDc, \{\phi_t\}_{t \leq T}) \wedge \rho$, which is possible since $\inf_{x \in K} d(\partial \cDc, \{\phi_t\}_{t \leq T}) > 0$. By Lemma~\ref{lm:Z_phi_control}, we have
    \begin{equation*}
        \sup_{x \in K} \Prob(\sup_{t \leq T}|Z_t - \phi_t| > \delta/4) \xrightarrow[\eps \to 0]{} 0.
    \end{equation*}
    Additionally, by Lemma~\ref{lm:|X_t-Z_t|>delta}, we can choose $\kappa$ small enough such that
    \begin{equation*}
        \sup_{x \in K} \sup_{X \in \X} \Prob(\sup_{t \leq T}|Z_t - X_t| > \delta/4) \xrightarrow[\eps \to 0]{} 0.
    \end{equation*}
    In order to finish the proof, note that
    \begin{equation*}
        \sup_{x \in K}\sup_{X \in \X} \Prob(X_{\tau^\prime_{\rho}(X)} \in \partial \cDc) \leq \sup_{x \in K}\sup_{X \in \X} \Prob(\sup_{t \leq T}|X_t - \phi_t| > \delta/2).
    \end{equation*}
    Indeed, otherwise, the process $X$ would hit $B_{\rho}(a)$ before leaving from $\cDc$. We finish the proof by noting that
    \begin{equation*}
    \begin{aligned}
        \sup_{x \in K}\sup_{X \in \X} \Prob(\sup_{t \leq T}|X_t - \phi_t| > \delta/2) &\leq \sup_{x \in K} \Prob(\sup_{t \leq T}|Z_t - \phi_t| > \delta/4) \\
        & \quad+ \sup_{x \in K} \sup_{X \in \X} \Prob(\sup_{t \leq T}|Z_t - X_t| > \delta/4),        
    \end{aligned}
    \end{equation*}
    which tends to zero as $\eps \to 0$, as was shown above.
\end{proof}

\begin{proof}[Proof of Lemma~\ref{lm:X_small_dev_from_x}]
    Fix some $\rho, \eta > 0$. By Lemma~\ref{lm:|Z_t-x|>r}, there exists $T_2 > 0$ such that
    \begin{equation}
    \label{eq:aux:|Z_t-x|}
        \limsup_{\eps \to 0} \frac{\eps}{2} \log \sup_{x \in \cDc} \Prob \left(\sup_{t \leq T_2} |Z_t - x| \geq \rho/4 \right) \leq -(H + 2\eta).
    \end{equation}

    Note that we have the following inequalities:
    \begin{equation*}
    \begin{aligned}
        \sup_{x \in \cDc} \sup_{X \in \X}  \Prob \left(\sup_{t \leq T_2} |X_t - x| \geq \frac{\rho}{2} \right) &\leq \sup_{x \in \cDc } \Prob \left(\sup_{t \leq T_2} |Z_t - x| \geq \rho/4 \right) \\ 
        & \quad + \sup_{x \in \cDc} \sup_{x \in \X} \Prob \left(\sup_{t \leq T_2} |X_t - Z_t| \geq \rho/4 \right).      
    \end{aligned}
    \end{equation*}
    By Lemma~\ref{lm:|X_t-Z_t|>delta} and Equation \eqref{eq:aux:|Z_t-x|}, for $\eps < 1$, the last expression is bounded by:
    \begin{equation*}
        \sup_{x \in \cDc} \sup_{X \in \X}  \Prob \left(\sup_{t \leq T_2} |X_t - x| \geq \frac{\rho}{2} \right) \leq \e^{-2(H + 2\eta)/\eps}\left(1 + \e^{-2(\varsigma(\kappa, T_2, \frac{\rho}{4}) - H -2\eta)} \right).
    \end{equation*}
    Choose $\kappa > 0$ to be small enough such that $\exp{-2(\varsigma(\kappa, T_2, \frac{\rho}{4}) - H -2\eta)} < 1$. Then the following upper bound holds:
    \begin{equation*}
         \sup_{x \in \cDc} \sup_{X \in \X}  \Prob \left(\sup_{t \leq T_2} |X_t - x| \geq \frac{\rho}{2} \right) \leq 2\exp{-2(H + 2\eta)/\eps},
    \end{equation*}
    for $\eps$ small enough, which proves the lemma, after taking $\frac{\eps}{2} \log$ from both sides of the inequality.
\end{proof}

\section{Exit time for McKean-Vlasov diffusion}

Recall that Theorem~\ref{thm:McKean_exit_time} follows directly from Lemma~\ref{lm:McKean-Vlasov_law_control} that is control of the law of the process $Y$ in the form:
    \begin{equation*}
        \Wass_2(\mu_t, \delta_a) \leq \kappa/\widetilde{C}_1.
    \end{equation*}
We can prove the control of the law in the form above not only for the McKean-Vlasov but for any inhomogeneous process attracted strong enough to $a$. 

Let $M, R, \widetilde{K}_1 > 0$ be as defined in Assumptions~\ref{assu:MV:existence}--\ref{assu:MV:control3}. Consider the following subset of $\X$.
\begin{definition}
\label{def:X_tilde}
    For any $\eps > 0$, $\kappa > 0$, and $x \in \R^d$, let $\widetilde{\X}(\eps, \kappa, x)$ be the set of $\cFc^W_t$-adapted processes that belong to $\X(\eps, \kappa, x)$ and satisfy three additional conditions:
    \begin{enumerate}
        \item $\ds \sup_{x \in \R^d} \sup_{X \in \widetilde{X}} \sup_{t \geq 0}\E|X_t - a|^4 < M,$ 
        \item $\ds \sup_{t \geq 0} \sup_{x \in B_R(a)} \|\nabla_{\! x} \Drift(t, x) - \nabla \drift(x)\| \leq \kappa$,
        \item $\ds \sup_{t \geq 0}|\Drift(t, a)| \leq \widetilde{K}_1 \frac{\kappa}{\widetilde{C}_1} $.
    \end{enumerate}
\end{definition}

The conditions of the definition are clearly inspired by Assumption~\ref{assu:MV:existence} and Equations~\eqref{eq:nabla_drift_control} and \eqref{eq:drift_at_a_control}. In fact, it is easy to check that if we define $\ds S_\kappa := \inf\{t \geq 0: \Wass_2(\mu_t, \delta_a) \geq \kappa/\widetilde{C}_1\}$, then the stopped process $\{Y_{t \wedge S_\kappa} \}_{t\geq 0}$ belongs to $\widetilde{\X}(\eps, \kappa, a)$ for any $\kappa, \eps > 0$. This makes the following lemma of particular interest.

\begin{lemma}
\label{lm:X_law_control}
   Under assumptions of this section, there exists $\delta^\prime > 0$ such that for any $\kappa > 0$ small enough, for any $\eps > 0$ small enough, and for any $x \in \R^d$ such that $|x - a| \leq \kappa/(2 \widetilde{C}_1)$, we have:
    \begin{equation*}
        \sup_{t \leq \exp{\frac{2(H + \delta^\prime)}{\eps}}} \sup_{X \in \widetilde{\X}} \Wass_2 (\Law(X_t), \delta_a) < \kappa / \widetilde{C}_1.
    \end{equation*}
\end{lemma}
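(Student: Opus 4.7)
The plan is to reduce the claim to a differential inequality for $m(t) := \E|X_t - a|^2 = \Wass_2^2(\Law(X_t), \delta_a)$ and to show that $m(t) < (\kappa/\widetilde{C}_1)^2$ for all $t \leq T_\eps := \exp{\frac{2(H+\delta')}{\eps}}$ by exploiting the strong local attraction toward $a$ encoded in Definition~\ref{def:X_tilde}. First, I would apply Itô's formula to $V(x) = |x - a|^2$, obtaining
\begin{equation*}
m'(t) = 2\,\E\bigl[(X_t - a)^* \Drift(t, X_t)\bigr] + \eps\, \E\bigl[\mathrm{tr}(\Diff\Diff^*)(t, X_t)\bigr],
\end{equation*}
and then decompose $\Drift(t, X_t) = \Drift(t, a) + \int_0^1 \nabla_{\!x}\Drift(t, a + s(X_t - a))(X_t - a)\,ds$. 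The piece involving $\Drift(t, a)$ contributes at most $2\widetilde{K}_1(\kappa/\widetilde{C}_1)\sqrt{m(t)}$ by condition~3 of Definition~\ref{def:X_tilde}. On the event $\{|X_t - a| \leq R\}$ the argument $a + s(X_t - a)$ lies in $B_R(a)$, so condition~2 combined with~\eqref{eq:nabla_drift_control} yields $(X_t - a)^* \nabla_{\!x}\Drift(t, a+s(X_t-a))(X_t - a) \leq -(\widetilde{K}(R) - \kappa)|X_t - a|^2$; on the complement, the Lipschitz property of $\drift$ together with $|\Drift - \drift| \leq \kappa$ gives a quadratic bound of the form $L|X_t - a|^2 + 2\kappa|X_t - a|$, to be controlled via the fourth-moment argument below.

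Second, I would control the outside contribution $\E[|X_t - a|^2 \1_{|X_t - a| > R}]$ by combining two facts. On the one hand, condition~1 of Definition~\ref{def:X_tilde} provides the uniform bound $\sup_{t \geq 0} \E|X_t - a|^4 \leq M$, so Cauchy-Schwarz yields $\E[|X_t - a|^2 \1_{|X_t - a| > R}] \leq \sqrt{M\,\Prob(|X_t - a| > R)}$. On the other hand, since $\widetilde{\X} \subset \X$, one may apply the Freidlin-Wentzell-type estimate of Lemma~\ref{lm:X_small_dev_from_x} with $B_R(a)$ playing the role of the reference domain (this ball satisfies Assumptions~\ref{assu:1}--\ref{assu:4} for the $(\drift, \diff)$ dynamics by the choice of $R$). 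Together with the strong Markov property from Remark~\ref{rem:X_Markov_property} and the fact that the deterministic flow returns to any neighborhood of $a$ in $O(1)$ time, a recurrence argument should produce a uniform-in-$t$ bound $\Prob(|X_t - a| > R) \leq C\exp(-c/\eps)$ on $[0, T_\eps]$ for some $c > 0$.

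Putting the pieces together and using AM-GM to absorb $2\widetilde{K}_1(\kappa/\widetilde{C}_1)\sqrt{m(t)}$ into $\widetilde{K}_1 m(t) + \widetilde{K}_1(\kappa/\widetilde{C}_1)^2$, the differential inequality becomes
\begin{equation*}
m'(t) \leq -\bigl[2(\widetilde{K}(R) - \kappa) - \widetilde{K}_1\bigr] m(t) + \widetilde{K}_1(\kappa/\widetilde{C}_1)^2 + \rho(\eps, \kappa),
\end{equation*}
where $\rho(\eps, \kappa)$ gathers the outside contribution and the noise term $\eps\,\E\|\Diff\|^2$ and satisfies $\rho(\eps, \kappa) = o\bigl((\kappa/\widetilde{C}_1)^2\bigr)$ as $\eps \to 0$. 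Since $\widetilde{K}_1 < \widetilde{K}$ by~\eqref{eq:drift_at_a_control} and $\widetilde{K}(R) \to \widetilde{K}$ as $R$ shrinks, the coefficient $2(\widetilde{K}(R) - \kappa) - \widetilde{K}_1$ is strictly positive for $\kappa$ small, and the equilibrium $m^* = [\widetilde{K}_1(\kappa/\widetilde{C}_1)^2 + \rho]/[2(\widetilde{K}(R) - \kappa) - \widetilde{K}_1]$ lies strictly below $(\kappa/\widetilde{C}_1)^2$ for $\eps$ sufficiently small. Combined with $m(0) = |x - a|^2 \leq (\kappa/(2\widetilde{C}_1))^2$, a standard ODE comparison gives $m(t) < (\kappa/\widetilde{C}_1)^2$ throughout $[0, T_\eps]$.

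The hard part will be securing the exponentially small bound on $\Prob(|X_t - a| > R)$ uniformly across the long horizon $[0, T_\eps]$, a time that typically exceeds a single Freidlin-Wentzell exit time from $B_R(a)$. This will require iterating the local exit-time estimate via the Markov property: after each excursion out of $B_R(a)$, the strong attraction returns the process to a smaller ball in deterministic time, so the fraction of $[0, T_\eps]$ spent outside $B_R(a)$ remains exponentially small in $\eps$; combined with the uniform fourth-moment bound from condition~1 to tame the still-rare large excursions, this furnishes the uniform bound needed to close the ODE argument.
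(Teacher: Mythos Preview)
Your Itô-plus-ODE-comparison strategy for $m(t)=\E|X_t-a|^2$ is essentially the paper's approach (the paper localizes first by the stopping time $\theta=\inf\{t:X_t\notin B_{2\rho}(a)\}$ and studies $\xi(t)=\E|X_{t\wedge\theta}-a|^2$, then removes $\theta$ via the fourth-moment bound, but the algebra is the same). The AM--GM linearization you use is a slight weakening of the paper's square-root ODE, yet it still gives an equilibrium strictly below $(\kappa/\widetilde C_1)^2$, so that part is fine. One minor point: $R$ is \emph{fixed} by Assumption~\ref{assu:MV:control2}, so when you write ``$\widetilde K(R)\to\widetilde K$ as $R$ shrinks'' you should instead work on a smaller ball $B_{2\rho}(a)\subset B_R(a)$ with $\rho$ chosen so that $\widetilde K(2\rho)$ is close to $\widetilde K$; this is what the paper does.

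The genuine gap is in the part you yourself flag as hard: the uniform bound on $\Prob(|X_t-a|>R)$ over $[0,T_\eps]$, and in particular the \emph{choice of $\delta'$}. Your recurrence sketch (``after each excursion out of $B_R(a)$ the strong attraction returns the process'') tacitly assumes the process remains in the basin of attraction of $a$. But Assumption~\ref{assu:1} only guarantees attraction \emph{inside $\cDc$}; once $X$ leaves $\cDc$ there is no mechanism bringing it back, and the exit time from $\cDc$ is of order $\exp\{2H/\eps\}$, which is \emph{shorter} than $T_\eps=\exp\{2(H+\delta')/\eps\}$. So iterating an exit/return cycle based on $B_R(a)$ alone cannot close, and nothing in your argument singles out a value of $\delta'$ for which it would.

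The missing idea, which the paper supplies in Lemma~\ref{lm:X_t_notin_B_rho}, is to pass to an \emph{enlargement} $\cDc^e_r\supset\cDc$ that still satisfies Assumptions~\ref{assu:1}--\ref{assu:4} and has strictly larger quasipotential height $H^e_r>H$ (this exists by Assumption~\ref{assu:2} and continuity of $Q$). One then fixes $\delta'$ so that $H+2\delta'<H^e_r$; Theorem~\ref{thm:main_result} applied to $\cDc^e_r$ gives $\sup_{t\le T_\eps}\Prob(X_t\notin\cDc^e_r)\to 0$. Since from any point of $\cDc^e_r$ the deterministic flow reaches $B_{\rho/4}(a)$ in a bounded time $T^1_\rho$, a single Markov step (not an unbounded iteration) yields $\sup_{t\le T_\eps}\Prob(X_t\notin B_\rho(a))\to 0$. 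This is precisely the input your ODE argument needs for $\rho(\eps,\kappa)=o((\kappa/\widetilde C_1)^2)$; note that only convergence to zero is required and obtained, not the exponential rate $C\exp(-c/\eps)$ you assert.
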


This lemma means that for any process $X \in \widetilde{\X}$ its law satisfies $\Wass_2 (\Law(X_t), \delta_a) < \kappa / \widetilde{C}_1$ until time $\exp{\frac{2(H + \delta^\prime)}{\eps}}$. That includes the stopped process $\{Y_{t \wedge S_\kappa} \}_{t\geq 0}$ and thus shows that $S_\kappa > \exp{\frac{2(H + \delta^\prime)}{\eps}}$. That in turn gives us Lemma~\ref{lm:McKean-Vlasov_law_control}, since there is an indistinguishable, up to time $\exp{\frac{2(H + \delta^\prime)}{\eps}}$, copy of $Y$ that belongs to $\widetilde{\X} \subset \X$. Therefore, it is enough to prove Lemma~\ref{lm:X_law_control}. 

To achieve this, we first present the following result, which demonstrates that for any fixed neighborhood around the point $a$, the probability of the process $X_t$ being outside this neighborhood tends to zero as $\eps \to 0$.

\begin{lemma}
\label{lm:X_t_notin_B_rho}
       Under Assumptions~\ref{assu:1}--\ref{assu:4}, there exists $\delta^\prime > 0$ such that for any $\rho > 0$ small enough and for any $\kappa > 0$ small enough we have:
    \begin{equation*}
        \sup_{t \leq \exp{\frac{2(H + \delta^\prime)}{\eps}}} \sup_{x \in B_{\rho/4}(a)} \sup_{X \in \X} \Prob(X_t \notin B_\rho(a)) \xrightarrow[\eps \to 0]{} 0.
    \end{equation*}
\end{lemma}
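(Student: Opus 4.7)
Given that this lemma serves as a preliminary step in the proof of Lemma~\ref{lm:X_law_control}, which concerns processes in $\widetilde{\X}$ of Definition~\ref{def:X_tilde}, I treat the supremum over $X \in \X$ as effectively restricted to processes inheriting the additional conditions of $\widetilde{\X}$: the bounded fourth moment, the bound $|\Drift(t,a)| \leq \widetilde{K}_1 \kappa/\widetilde{C}_1$, and the gradient control $\|\nabla_x \Drift(t, x) - \nabla \drift(x)\| \leq \kappa$ on $B_R(a)$. (Indeed, for a generic $X \in \X$ no such bound is available on the scale $T := \exp(2(H+\delta^\prime)/\eps)$, because $Z \in \X$ itself typically exits $\cDc$ by this time.) The plan is to derive a uniform second-moment bound $\E|X_t - a|^2 = o(1)$ as $\eps, \kappa \to 0$ on $[0, T]$, and conclude by Markov's inequality.

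\textbf{Step 1: a dissipative It\^o estimate in $B_R(a)$.} Apply It\^o's formula to $V(x) := |x - a|^2$. Taylor-expanding
\begin{equation*}
    \Drift(t, x) = \Drift(t, a) + \int_0^1 \nabla_x \Drift(t, a + s(x - a))(x - a)\,\dd{s}
\end{equation*}
and combining the three conditions above with the convexity~\eqref{eq:nabla_drift_control} and the diffusion bound of Assumption~\ref{assu:4}, one obtains, for $\kappa$ small enough and $x \in B_R(a)$, the dissipative generator estimate
\begin{equation*}
    \mathcal{L} V(x) \leq - c_0 V(x) + c_1 (\kappa^2 + \eps),
\end{equation*}
where $c_0 > 0$ depends on $\widetilde{K}(R)$ and $c_1$ on $\lambda, \widetilde{C}_1, \widetilde{K}_1$.

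\textbf{Step 2: second-moment bound up to $T$.} Introduce $\sigma_R := \inf\{t \geq 0 : X_t \notin B_R(a)\}$ and apply It\^o to $V(X_{t \wedge \sigma_R})$. A Gronwall-type manipulation using the dissipative inequality up to $\sigma_R$ yields
\begin{equation*}
    \E V(X_{t\wedge \sigma_R}) \leq |x - a|^2 e^{-c_0 t} + C(\kappa^2 + \eps) + C R^2 \, \Prob(\sigma_R \leq t),
\end{equation*}
and Cauchy--Schwarz against the fourth moment gives $\E V(X_t) \leq \E V(X_{t \wedge \sigma_R}) + M^{1/2}\,\Prob(\sigma_R \leq t)^{1/2}$. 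The crucial remaining point is that $\Prob(\sigma_R \leq T) \to 0$ provided $\delta^\prime$ is chosen small enough: under the enhanced attractive structure of $\widetilde{\X}$, the quasi-potential height $H_R$ from $a$ to $\partial B_R(a)$ strictly exceeds $H + \delta^\prime$, so Theorem~\ref{thm:main_result} applied to $B_R(a)$ viewed as a positively invariant sub-domain forces the exit-from-$B_R$ probability to be negligible on the window $[0, T]$. Together these ingredients give $\E V(X_t) = o(1)$ uniformly in $t \in [0, T]$ and $x \in B_{\rho/4}(a)$.

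\textbf{Conclusion.} Markov's inequality then yields
\begin{equation*}
    \Prob(|X_t - a| \geq \rho) \leq \frac{\E V(X_t)}{\rho^2} \xrightarrow[\eps \to 0]{} 0
\end{equation*}
uniformly in $t \leq T$, $x \in B_{\rho/4}(a)$, and $X$ in the relevant class. The main obstacle is the balance in Step 2: strong dissipation is only available inside $B_R(a)$, so $\delta^\prime$ must be fixed small enough (coordinated with $H_R$) that $T$ remains well below the exit time from $B_R(a)$, while the fourth-moment control simultaneously tames the residual contribution of $\{X_t \notin B_R(a)\}$ so that the bound on $\E V(X_t)$ genuinely vanishes as $\eps, \kappa \to 0$.
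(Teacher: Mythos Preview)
Your proposal has a genuine gap, and it stems from a misreading of the lemma together with a mistaken large-deviations claim.

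First, the lemma is stated and proved in the paper for $\X$, not for $\widetilde{\X}$. Your opening paragraph asserts that the result cannot hold for a generic $X\in\X$ because even $Z$ exits $\cDc$ by time $T=\exp\{2(H+\delta')/\eps\}$; but the statement concerns $\Prob(X_t\notin B_\rho(a))$, not exit from $\cDc$, and the paper's argument handles $Z$ (and every $X\in\X$) with no moment or gradient conditions. The trick is to work in an \emph{enlargement} $\cDc^e_r\supset\cDc$ whose quasi-potential height $H^e_r$ strictly exceeds $H$; one then fixes $\delta'$ so that $H+2\delta'<H^e_r$, which ensures $\Prob(\tau^e_r(X)<T)\to 0$ by Theorem~\ref{thm:main_result}. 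Since $\cDc^e_r$ is still a basin of attraction of $a$, one has a deterministic time $T_\rho^1$ such that from anywhere in $\cDc^e_r$ the process is back in $B_{\rho/2}(a)$ with high probability. Slicing $[0,T]$ into intervals of length $T_\rho^1$ and combining ``no exit from $\cDc^e_r$'' with ``return to $B_{\rho/2}(a)$ each period'' gives the result.

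Second, and more seriously, your Step~2 rests on the claim that the quasi-potential height $H_R:=\inf_{y\in\partial B_R(a)}Q(y)$ satisfies $H_R>H+\delta'$, so that $\Prob(\sigma_R\le T)\to 0$. This is false in general: $R$ is the small radius from Assumption~\ref{assu:MV:control2}, chosen so that $B_R(a)\subset\cDc$ and $\nabla\drift$ is uniformly negative-definite there. Since $Q(a)=0$ and $Q$ is continuous, $H_R$ is small, not large; in particular $H_R<H$. Hence the typical exit time from $B_R(a)$ is $\exp\{2H_R/\eps\}\ll T$, the process leaves $B_R(a)$ many times before $T$, and your Gronwall bound on $\E V(X_{t\wedge\sigma_R})$ gives no control of $\E V(X_t)$ on $[0,T]$. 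The ``enhanced attractive structure of $\widetilde{\X}$'' does not help: the exit rate is governed by the rate function built from $\drift,\diff$, which is unchanged. In short, you shrink the domain where the paper enlarges it, and that is why the argument breaks. Note also that the paper deliberately separates the two scales: Lemma~\ref{lm:X_t_notin_B_rho} establishes the probability bound for $\X$ using only large-deviations/attractor arguments, and it is only in Lemma~\ref{lm:X_law_control} that the moment and gradient conditions of $\widetilde{\X}$ are invoked to upgrade this to a $\Wass_2$ bound.
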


\begin{proof}
    We will prove the lemma in three steps.

    {\it Step 1.} Let us first define $\delta^\prime > 0$ then, given its value, we will fix some small enough parameters $\rho > 0$ and $\kappa > 0$ and prove the lemma by taking the limit with $\eps \to 0$. As was pointed out on page~\pageref{def:D_enlargement}, there exists an enlargement of the set $\cDc$, that we call $\cDc^e_r \supset \cDc$, that itself satisfies Assumptions~\ref{assu:1}--\ref{assu:4}, but $d(\partial \cDc^e_r, \cDc) > 0$. Since the quasipotential is continuous, its height $\ds H^e_r = \inf_{y \in \partial \cDc^e} Q(y) > H$ and thus we can fix some positive $\delta^\prime$ such that $H + 2\delta^\prime < H^e_r$ for some $r > 0$. That in particular means that by Theorem~\ref{thm:main_result}, for any compact set $K \subset \cDc^e_r$ and for $\tau^e_r(X) := \inf\{t \geq 0: X_t \notin \cDc^e_r\}$, we have
    \begin{equation}
    \label{eq:aux:X_D_e_exit}
        \sup_{x \in K} \sup_{X \in \X} \Prob \left(\tau^e_r(X) < \exp{\frac{2(H + \delta^\prime)}{\eps}} \right) \xrightarrow[\eps \to 0]{} 0. 
    \end{equation}
    
    Given $\delta^\prime$, let us now choose $\rho$. Fix $\rho > 0$ to be small enough such that $\rho < d(\partial \cDc^e, \cDc)$ and $B_{\rho}(a) \subset \cDc$. Let $\rho$ be small enough such that Corollary~\ref{lm:Z_conv} holds for the enlargement $\cDc^e_r$ and with $\delta = \rho/4$. Therefore, there exists a positive time $T^1_\rho$ such that
    \begin{equation}
    \label{eq:aux:Z_conv}
        \sup_{x \in \cDc^e_r} \Prob (Z_{T^1_\rho} \notin B_{\rho/4}(a)) \xrightarrow[\eps \to 0]{} 0.
    \end{equation}

    Let $\rho$ be small enough such that Corollary~\ref{lm:|Z_t-a|>beta_delta} holds with $\delta$ replaced by $\rho/2$, $T$ by $T^1_\rho$ chosen above and $\beta = 3/2$. That gives us:
    \begin{equation}
    \label{eq:aux:B_rho/2_for_Z}
        \sup_{x \in B_{\rho/2}(a)} \Prob\left(\sup_{t \leq T^1_\rho} |Z_t - a| > \frac{3}{4}\rho \right) \xrightarrow[\eps \to 0]{} 0.
    \end{equation}


    Based on the selected $\rho$ and $\delta^\prime$, we will now specify a particular $\kappa$ for which subsequent calculations will be performed, though these could be conducted for any $\kappa$ that is smaller than the fixed one. First, let us use Lemma~\ref{lm:|X_t-Z_t|>delta} with $\delta = \rho/4$ and $T = T_\rho^1$, and chose a small enough $\kappa > 0$ such that 
    \begin{equation}
    \label{eq:aux:|X-Z|>rho/4}
        \sup_{x \in \cDc^e_r} \sup_{X \in \X} \Prob \left(\sup_{t \leq T_\rho^1} |X_t - Z_t| > \frac{\rho}{4}\right) \xrightarrow[\eps \to 0]{} 0.
    \end{equation}

    As the final remark of this step, note that Equations~\eqref{eq:aux:Z_conv}, \eqref{eq:aux:|X-Z|>rho/4} provide the following estimate:
    \begin{equation}
    \label{eq:aux:X_conv}
    \begin{aligned}
        \sup_{x \in \cDc^e_r} \sup_{X \in \X} \Prob (X_{T^1_\rho} \notin B_{\rho/2}(a)) &\leq \sup_{x \in \cDc^e_r} \Prob (Z_{T^1_\rho} \notin B_{\rho/4}(a)) \\
        & \quad + \sup_{x \in \cDc^e_r} \sup_{X \in \X} \Prob \left(\sup_{t \leq T_\rho^1} |X_t - Z_t| > \frac{\rho}{4}\right) \xrightarrow[\eps \to 0]{} 0.        
    \end{aligned}
    \end{equation}
    
    {\it Step 2.} Let us now prove the lemma. First, we show the result for discrete times of the form $k T_\rho^1$, where $k = 1, \dots, \left\lfloor \e^{\frac{2(H + \delta^\prime)}{\eps}} / T_\rho^1 \right\rfloor$. For any $X \in \X$ we have
    \begin{equation*}
    \begin{aligned}
        \sup_{k} \sup_{x \in B_{\rho/4}(a)} \Prob(X_{k T_\rho^1} \notin B_{\rho/2}(a)) &\leq \sup_{k} \sup_{x \in B_{\rho/4}(a)} \Prob(X_{k T_\rho^1} \notin B_{\rho/2}(a), X_{(k - 1)T_\rho^1} \in \cDc^e_r) \\
        & \quad + \sup_{k} \sup_{x \in B_{\rho/4}(a)}  \Prob(X_{(k - 1)T_\rho^1} \notin \cDc^e_r),
    \end{aligned}    
    \end{equation*}
    where the suprema are taken over $k = 1, \dots, \left\lfloor \e^{\frac{2(H + \delta^\prime)}{\eps}} / T_\rho^1 \right\rfloor$. Using Remark~\ref{rem:X_Markov_property}, the expression above is bounded by
    \begin{equation*}
    \begin{aligned}
        \sup_{x \in \cDc^e} \Prob(X_{T_\rho^1} \notin B_{\rho/2}(a)) &+ \sup_{k}\sup_{x \in B_{\rho/4}(a)} \Prob(\tau^e(X) < (k - 1)T_\rho^1) \\
        \leq \sup_{x \in \cDc^e} \Prob(X_{T_\rho^1} \notin B_{\rho/2}(a)) &+ \sup_{x \in B_{\rho/4}(a)} \Prob \left(\tau^e(X) < \exp{\frac{2(H + \delta^\prime)}{\eps}} \right),        
    \end{aligned}
    \end{equation*}
    which tends to zero as $\eps \to 0$ independently of $X \in \X$ due to \eqref{eq:aux:X_conv} and \eqref{eq:aux:X_D_e_exit}. Therefore, we get
    \begin{equation}
    \label{eq:aux:X_k_T_rho_notin_B}
        \sup_{k} \sup_{x \in B_{\rho/4}(a)} \Prob(X_{k T_\rho^1} \notin B_{\rho/2}(a)) \xrightarrow[\eps \to 0]{} 0,
    \end{equation}
    where, again, the supremum is taken over $k = 1, \dots, \left\lfloor \e^{\frac{2(H + \delta^\prime)}{\eps}} / T_\rho^1 \right\rfloor$.

    {\it Step 3.}  To complete the proof, consider $t \in [k T_\rho^1, (k + 1)T_\rho^1]$ for some \\ $k \leq \left\lfloor\exp{\frac{2(H + \delta^\prime)}{\eps}}/T_\rho^1 \right\rfloor$. Then we have
    \begin{equation*}
    \begin{aligned}
        \sup_{X \in \X}\Prob(X_t \notin B_{\rho}(a)) &\leq \sup_{X \in \X} \Prob(X_t \notin B_{\rho}(a), X_{kT_\rho^1} \in B_{\rho/2}(a)) + \sup_{X \in \X} \Prob(X_{kT_\rho^1} \notin B_{\rho/2}(a)) \\
        & \leq \sup_{x \in B_{\rho/2}(a)} \sup_{X^\prime \in \X} \Prob(X^\prime_{t - kT^1_\rho} \notin B_{\rho}(a)) + \sup_{X \in \X} \Prob(X_{kT_\rho^1} \notin B_{\rho/2}(a)) \\
        & \leq \sup_{x \in B_{\rho/2}(a)} \sup_{X^\prime \in \X} \Prob(\sup_{t \leq T_\rho^1}|X^\prime_t - a| > \rho) + \sup_{X \in \X} \Prob(X_{kT_\rho^1} \notin B_{\rho/2}(a)).
    \end{aligned} 
    \end{equation*}

    The expression above is less or equal than
    \begin{equation*}
    \begin{aligned}
        \sup_{x \in B_{\rho/2}(a)} \sup_{X^\prime \in \X} \Prob\left(\sup_{t \leq T_\rho^1}|X^\prime_t - Z_t| > \frac{\rho}{4}\right) + \sup_{x \in B_{\rho/2}(a)} \Prob \left(\sup_{t \leq T_\rho^1}|Z_t - a| > \frac{3\rho}{4} \right) \\
        + \sup_{X \in \X} \Prob(X_{kT_\rho^1} \notin B_{\rho/2}(a)),        
    \end{aligned}
    \end{equation*}
    which tends to zero with $\eps \to 0$ by Equations~\eqref{eq:aux:|X-Z|>rho/4}, \eqref{eq:aux:B_rho/2_for_Z} and \eqref{eq:aux:X_k_T_rho_notin_B}. Moreover,  convergence in \eqref{eq:aux:X_k_T_rho_notin_B} does not depend on $k$. This proves the lemma.
\end{proof}

We are now prepared to prove Lemma~\ref{lm:X_law_control}, which establishes control of the law $\mu_t$ and, as a consequence, provides a proof of Theorem~\ref{thm:McKean_exit_time}.

\begin{proof}[Proof of Lemma~\ref{lm:X_law_control}]
    We prove this lemma in several steps.
    
    \textit{Step 1}. Let us first fix the constants that we use in the proof. In the following steps, we will only adjust $\eps$. Take some positive $\overline{\rho}$ that satisfies $\overline{\rho} < R/2 \wedge 1$, where $R$ is defined in Assumption~\ref{assu:MV:control2}. Decrease it if necessary so that $\widetilde{K}(2\overline{\rho}) > \widetilde{K}_2  := (\widetilde{K}_1 + \widetilde{K})/2$ (see Fig.~\ref{fig:K_positions}), which is possible since $\widetilde{K}(r) \xrightarrow[r \to 0]{} \widetilde{K}$. Obviously, for any $\rho < \overline{\rho}$ we would still have $\widetilde{K}(2 \rho) > \widetilde{K}_2$. This seemingly arbitrary choice of $\overline{\rho}$ will be apparent soon.

    \begin{figure}[b]
        \centering
        \includegraphics{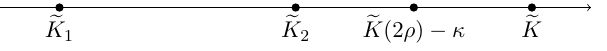}
        \caption{Depiction of the relationship between $\widetilde{K}_1$, $\widetilde{K}_2$, $(\widetilde{K}(2\rho) - \kappa)$, and $\widetilde{K}$.}
        \label{fig:K_positions}
    \end{figure}

    Fix $\delta^\prime$ as in Lemma~\ref{lm:X_t_notin_B_rho}. Take some $\rho$ and $\kappa$ that are small enough so that Lemma~\ref{lm:X_t_notin_B_rho} holds. decrease $\rho$ if necessary to have $\rho < \overline{\rho}$. Decrease $\kappa$ if necessary, to satisfy Lemma~\ref{lm:X_t_notin_B_rho} with the new value of $\rho$. This update is of importance since in this Lemma the upper bound for the values of $\kappa$ for which the result of the lemma holds depends on the choice of $\rho$. The parameter $\rho$ is finally fixed and will not be adjusted anymore, though we will still decrease $\kappa$. This will not change the fact that, due to Lemma~\ref{lm:X_t_notin_B_rho}, we have:
    \begin{equation}
    \label{eq:aux:lm:X_t_notin_B_rho}
        \sup_{t \leq \exp{\frac{2(H + \delta^\prime)}{\eps}}} \sup_{x \in B_{\rho/4}(a)} \sup_{X \in \X} \Prob(X_t \notin B_{\rho}(a)) \xrightarrow[\eps \to 0]{} 0.
    \end{equation}

    Decrease $\kappa$ to have $\widetilde{K}(2 \rho) - \kappa > \widetilde{K}_2$. Now we can see the reason behind choosing $\rho$ this way. Since $\rho < R/2$, for any $x \in B_{2 \rho}(a)$ and for any $t \geq 0$ we have
    \begin{equation}
    \label{eq:aux:nabla_Drift_bound}
        \nabla \Drift(t, x) = \nabla \drift(x) + (\nabla \Drift(t, x) - \nabla b(x)) \preceq (-\widetilde{K}(2 \rho) + \kappa)\Id \leq -\widetilde{K}_2 \Id. 
    \end{equation}

    Finally, note that $B_{2 \rho}(a)$ being a positively invariant for the flow $\dot{\phi_t} = \drift(\phi_t)$ domain, satisfies the conditions of Assumptions~\ref{assu:1}--\ref{assu:4}, thus the exit-time results as in Theorem~\ref{thm:main_result} hold inside it. In particular, if we fix 
    \begin{equation}
    \label{eq:def:T_1}
        T_1 := \frac{2 \widetilde{C}_1^2}{(\widetilde{K}_2 - \widetilde{K}_1)\kappa},
    \end{equation} 
    a positive time the meaning of which will be apparent in Step 2, then, since it does not depend on $\eps$, by Theorem~\ref{thm:main_result}, we have:
    \begin{equation}
    \label{eq:aux:|X_t - a|_sqrt_rho}
        \sup_{x \in B_{\rho}(a)} \sup_{X \in \widetilde{\X}} \Prob (\sup_{t \in [0 , T_1]} |X_t - a| > 2\rho) \xrightarrow[\eps \to 0]{} 0. 
    \end{equation}
    With this remark, we conclude specifying the constants used in the proof and are ready to proceed to the next step.

    \textit{Step 2}. Define the following stopping time $\theta := \inf\{t \geq 0: X_t \notin B_{2 \rho}(a)\}$. In this step, we prove, first, that there exists $1/2 < m < 1$ such that for any $\eps$ small enough we have
    \begin{equation}
    \label{eq:aux:control_of_xi}
            |x - a| \leq \rho \Rightarrow \sup_{X \in \X} \Wass_2(\Law(X_{T_1 \wedge \theta}), \delta_a) < m \left(\frac{\kappa}{\widetilde{C}_1} \right),
    \end{equation}
    and, second, we have:
    \begin{equation}
    \label{eq:aux:control_of_xi_2}
        |x - a| \leq \frac{\kappa}{2\widetilde{C}_1} \Rightarrow \forall t \in [0, T_1]: \sup_{X \in \X} \Wass_2(\Law(X_{t \wedge \theta}), \delta_a) \leq \left(\frac{\kappa}{2\widetilde{C}_1} \right).
    \end{equation}

     In order to do so, define for any $X \in \widetilde{\X}(\eps, \kappa, x)$ the following:
    \begin{equation*}
        \xi(t) := \E |X_{t \wedge \theta} - a|^2 = \Wass_2^2(\Law(X_{t \wedge \theta}), \delta_a).
    \end{equation*}
    
    The Itô lemma gives us that for any $t \leq T_1$: 
    \begin{equation*}
    \begin{aligned}
        \frac{1}{2} \xi(t) = \E \int_{0}^{t \wedge \theta} \!\!\left[ \langle X_s - a, \sqrt{\eps}\Diff(s, X_s)\dd{W_s} + \Drift(s, X_s)\dd{s} \rangle + \frac{\eps}{2} \tr(\Diff^T(s, X_s) \Diff(s, X_s)) \dd{s} \right]. 
    \end{aligned}
    \end{equation*}

    Note that the expectation of the martingale part is equal to zero. Note also that, by the definition of $\widetilde{\X}$, $\|\Diff(s, x) - \diff(x)\| \leq \kappa < 1$ and, by Assumption~\ref{assu:4}, $\|\sigma(x)\| \leq \lambda$. Therefore, there exists a universal constant $C > 0$ such that:
    \begin{equation*}
    \begin{aligned}
        \frac{1}{2} \xi(t) &\leq \E \left[ \int_{0}^{t \wedge \theta} \langle X_s - a, \Drift(s, X_s)\dd{s} \rangle \right] + C\eps t \\ 
        & = \E \left[ \int_{0}^{t \wedge \theta} \langle X_s - a, \Drift(s, X_s) + \Drift(s, a)\rangle \dd{s} \right] + \E \left[ \int_{0}^{t \wedge \theta} \langle X_s - a, \Drift(s, a) \rangle \dd{s} \right] + C\eps t.
    \end{aligned}
    \end{equation*}

    By Equation~\eqref{eq:aux:nabla_Drift_bound} and using the Cauchy–Schwarz inequality, we get:
    \begin{equation*}
    \begin{aligned}
        \frac{1}{2} \xi(t) &\leq - \widetilde{K}_3(\rho) \E \left[ \int_{0}^{t \wedge \theta}| X_s - a|^2 \dd{s} \right] + \E\left[\int_0^{t \wedge \theta}|X_s - a|  \left|\Drift(s, a) \right| \dd{s} \right] + C\eps t.
    \end{aligned}
    \end{equation*}

    Note that for any $t, u \in \R_+$ and any function $f$, the following holds $\int_0^{t \wedge u} f_s \dd{s} = \int_0^t f_{s \wedge u} \dd{s} - f_u \1\{u < t\} (t - u)$. Moreover, the definition of $\widetilde{\X}$ gives us control of the term $|\Drift(s, a)|$. We get:
    \begin{equation*}
    \begin{aligned}
        \frac{1}{2} \xi(t) &\leq - \widetilde{K}_2 \E \left[ \int_{0}^{t}| X_{s \wedge \theta} - a|^2 \dd{s} \right] + \widetilde{K}_2 \E \left[ | X_{\theta} - a|^2 \1\{\theta < t\} \right] t \\
        & \quad + \widetilde{K}_1 \frac{\kappa}{\widetilde{C}_1} \E\left[\int_0^{t \wedge \theta}|X_s - a| \dd{s} \right] + C\eps t.
    \end{aligned}
    \end{equation*}
    In the following, we use the definition of $\theta$, which gives $|X_{\theta} - a|^2 = 4 \rho^2$ a.s., and the fact that $\int_0^{t \wedge u} f_s \dd{s} \leq \int_0^t f_{s \wedge u} \dd{s}$ for $f \geq 0$ to get:
    \begin{equation*}
    \begin{aligned}
        \frac{1}{2} \xi(t) &\leq - \widetilde{K}_2 \int_{0}^{t} \E| X_{s \wedge \theta} - a|^2 \! \dd{s}  + \widetilde{K}_1 \frac{\kappa}{\widetilde{C}_1} \int_0^{t} \E|X_{s\wedge \theta} - a|\! \dd{s} + \Big(C\eps + 4\widetilde{K}_2 \rho^2 \;\Prob(\theta < t) \Big) t\\
        & = - \widetilde{K}_2 \int_{0}^{t} \xi(s) \! \dd{s}  + \widetilde{K}_1 \frac{\kappa}{\widetilde{C}_1} \int_0^{t} \!\sqrt{\xi(s)}\! \dd{s} + \Big(C\eps + 4\widetilde{K}_2 \rho^2 \;\Prob( \theta < t) \Big) t.
    \end{aligned}
    \end{equation*}
    Recall that we consider only $t \leq T_1$ and thus, by Equation~\eqref{eq:aux:|X_t - a|_sqrt_rho}, $\Prob(\theta < t) \leq \Prob(\theta < T_1) \xrightarrow[\eps \to 0]{} 0$. Therefore, there exists $\alpha(\eps)$ that tends to 0 when $\eps \to 0$ such that
    \begin{equation*}
         \xi(t) \leq - 2\widetilde{K}_2 \int_{0}^{t} \xi(s) \! \dd{s}  + 2\widetilde{K}_1 \frac{\kappa}{\widetilde{C}_1} \int_0^{t} \!\sqrt{\xi(s)}\! \dd{s} + \alpha(\eps)t.
    \end{equation*}
    Note that $X$, being a strong solution of the form \eqref{eq:def_main_process}, gives $\xi$ that is continuous. That means that if we take $\psi$ the solution of
    \begin{equation*}
    \begin{cases}
        \dot{\psi}(t) = - 2\widetilde{K}_2 \psi(t)  + 2\widetilde{K}_1 \frac{\kappa}{\widetilde{C}_1} \sqrt{\psi(t)} + \alpha(\eps), \\
        \psi(0) = \rho;
    \end{cases}
    \end{equation*}
    then $\xi(t) \leq \psi(t)$ for any $t \geq 0$. Let us study the graph of $\psi$. Simple computations show that $\dot{\psi}(t) < 0$ whenever we have
    \begin{equation*}
    \begin{aligned}
        \sqrt{\psi(t)} &> \frac{\widetilde{K}_1 \frac{\kappa}{\widetilde{C}_1} + \sqrt{(\widetilde{K}_1 \frac{\kappa}{\widetilde{C}_1})^2 + \alpha(\eps)}}{2 \widetilde{K}_2} = \frac{1}{2}\left(1 + \sqrt{1 + \widetilde{C}_1^2\alpha(\eps)/(\widetilde{K}_1\kappa)^2} \right) \frac{\widetilde{K}_1}{\widetilde{K}_2} \frac{\kappa}{\widetilde{C}_1} \\
        &=: A(\eps) \frac{\widetilde{K}_1}{\widetilde{K}_2} \frac{\kappa}{\widetilde{C}_1}.        
    \end{aligned}
    \end{equation*}
    See Fig. \ref{fig:psi_dot_psi_dependency} for a graphical depiction. By the definition of $\widetilde{K}_2$, we have $\widetilde{K}_1/\widetilde{K}_2 < 1$. Let us define $m := (\widetilde{K}_1/\widetilde{K}_2 + 1)/2$ and let us choose $\eps$ to be small enough such that $A(\eps)\widetilde{K}_1/\widetilde{K_2} < m < 1$ (see Fig.~\ref{fig:psi_t_conv}).

   \begin{figure}[t]
        \centering
        \includegraphics{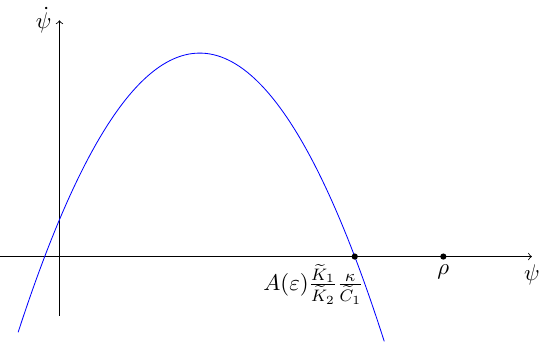}
        \caption{The dependency between $\psi(t)$ and $\dot{\psi}(t)$.}
        \label{fig:psi_dot_psi_dependency}
    \end{figure}

    \begin{figure}[t]
        \centering
        \includegraphics{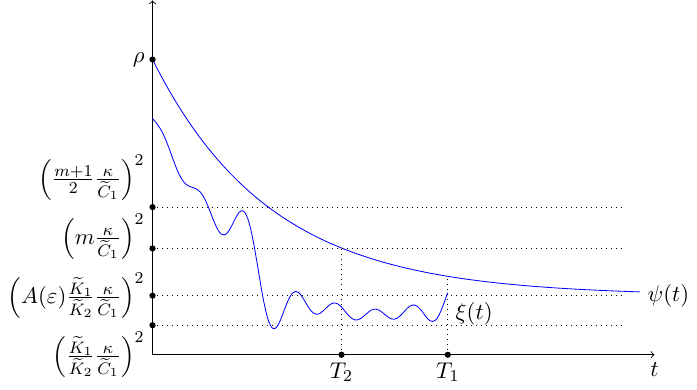}
        \caption{Convergence of $\psi$.}
        \label{fig:psi_t_conv}
    \end{figure}

    Let us now estimate the time of convergence $T_2 := \inf\{t \geq 0: \psi(t) \leq (m \kappa/ \widetilde{C}_1)^2\}$. By the definition of $\psi$, we have
    \begin{equation*}
        \psi(T_2) - \rho = - 2\widetilde{K}_2 \int_0^{T_2} \psi(s) \dd{s} + 2 \widetilde{K}_1 \frac{\kappa}{\widetilde{C}_1}\int_0^{T_2} \sqrt{\psi(s)}\dd{s} + \alpha(\eps).
    \end{equation*}
    Note that for any $t \leq T_2$, the derivative $\dot{\psi}(t) < 0$, which means that the smallest value of $\psi$ on the time interval $[0, T_2]$ is actually obtained at $T_2$. Since both the left- and right-hand sides of the equation above are negative, that gives us the following estimate:
    \begin{equation*}
    \begin{aligned}
        \psi(T_2) - \rho &\leq \left( - 2\widetilde{K}_2 \psi(T_2) + 2 \widetilde{K}_1 \frac{\kappa}{\widetilde{C}_1} \sqrt{\psi(T_2)} \right) T_2 + \alpha(\eps) \\
        & = 2  \left( - \widetilde{K}_2 m + \widetilde{K}_1 \right) m \left(\frac{\kappa}{\widetilde{C}_1}\right)^2 T_2 + \alpha(\eps).
    \end{aligned}
    \end{equation*}
    We use the definition of $m$, i.e. $m = \frac{\widetilde{K}_1}{2\widetilde{K}_2} + \frac{1}{2}$ and a simple bound $m > \frac{1}{2}$ and get
    \begin{equation*}
        \psi(T_2) - \rho \leq \left( - \frac{\widetilde{K}_1}{2} - \frac{\widetilde{K}_2}{2} + \widetilde{K}_1 \right) \left(\frac{\kappa}{\widetilde{C}_1} \right)^2 T_2 + \alpha(\eps) \leq \frac{1}{2} (\widetilde{K}_1 - \widetilde{K}_2) \left(\frac{\kappa}{\widetilde{C}_1}\right)^2 T_2 + \alpha(\eps).
    \end{equation*}
    Since $\widetilde{K}_1 < \widetilde{K}_2$, we get:
    \begin{equation*}
        T_2 \leq \frac{\rho + \alpha(\eps) - \psi(T_2)}{\frac{1}{2}(\widetilde{K}_2 - \widetilde{K}_1) \left( \frac{\kappa}{\widetilde{C}_1}\right)^2} < \frac{2 \widetilde{C}_1^2}{(\widetilde{K}_2 - \widetilde{K}_1) \kappa},
    \end{equation*}
    where we obtain the last inequality by choosing $\eps$ to be small enough such that $\alpha(\eps) < \psi(T_2) = (m \kappa/ \widetilde{C}_1)^2$. Recall our choice of $T_1$ in \eqref{eq:def:T_1}. It was chosen to be exactly equal to the right-hand side of the inequality above. That means that
    \begin{equation*}
        T_2 < T_1,
    \end{equation*}
    which in turns finally gives us that at the point of time $T_1$, we have:
    \begin{equation*}
        \xi(T_1) \leq \psi(T_1) \leq m^2 \left(\frac{\kappa}{\widetilde{C}_1}\right)^2 < \left(\frac{\kappa}{\widetilde{C}_1}\right)^2.
    \end{equation*}
    Moreover, note that, if $\xi(0) < \left(\frac{\kappa}{\widetilde{C}_1}\right)^2$, then we can easily show that the function $\psi$ starting at $\psi(0) = \left(\frac{\kappa}{\widetilde{C}_1}\right)^2$ will decrease with time, which gives that
    \begin{equation*}
        \xi(t) < \left(\frac{\kappa}{\widetilde{C}_1}\right)^2
    \end{equation*}
    for any $t \leq T_1$. All the estimates in this step are made independently of the choice of $X \in \widetilde{\X}$, which finally gives us \eqref{eq:aux:control_of_xi} and \eqref{eq:aux:control_of_xi_2}.
    
    \textit{Step 3}. In this step, we will remove $\theta$ from the result of the previous step (Equations~\eqref{eq:aux:control_of_xi}, \eqref{eq:aux:control_of_xi_2}). Namely, we will prove three following equations.
    \begin{equation}
    \label{eq:aux:control_of_xi_no_theta}
            |x - a| \leq \rho \Rightarrow \sup_{X \in \widetilde{\X}} \Wass_2(\Law(X_{T_1}), \delta_a) < \left(\frac{m + 1}{2} \right)\frac{\kappa}{\widetilde{C}_1}, \text{ and}     
    \end{equation}
    
    \begin{equation}
    \label{eq:aux:control_of_xi_no_theta_2}
            |x - a| \leq \frac{\kappa}{2\widetilde{C}_1} \Rightarrow \forall t \in [0, T_1]: \sup_{X \in \widetilde{\X}} \Wass_2(\Law(X_{t}), \delta_a) < \frac{\kappa}{\widetilde{C}_1}.  
    \end{equation}

    The trade-off of removing $\theta$ is that now the upper bound in both equations increased. In order to prove this, for any $x \in \cDc$ and $X \in \widetilde{\X}$, consider
    \begin{equation*}
    \begin{aligned}
        \E|X_{t} - a|^2 &= \E \left[|X_{t} - a|^2 \big(\1_{\{t \leq \theta\}} + \1_{\{t > \theta\}}\big)\right] \\
        &\leq \E \left[|X_{t \wedge \theta} - a|^2\right] + \sqrt{\E |X_{t} - a|^4 } \sqrt{\Prob(\theta < t)}.
    \end{aligned}
    \end{equation*}

    The probability $\Prob(\theta < t) \leq \Prob(\theta < T_2)$ tends to zero as $\eps \to 0$ by Equation~\eqref{eq:aux:|X_t - a|_sqrt_rho}. Since $X \in \widetilde{\X}$, the expectation $\E|X_{t} - a|^4 \leq  M$ (see Definition~\ref{def:X_tilde}). Lastly, $\E \left[|X_{t \wedge \theta} - a|^2\right]$ is equal to $\xi(t)$ by the definition of the latter. Therefore, we get
    \begin{equation}
        \E|X_{t} - a|^2 \leq \xi(t) + \sqrt{M}\alpha(\eps),
    \end{equation}
    for some $\alpha$ that tends to zero when $\eps \to 0$. That proves Equation~\eqref{eq:aux:control_of_xi_no_theta} by choosing $\eps$ small enough. In order to prove \eqref{eq:aux:control_of_xi_no_theta_2}, we use \eqref{eq:aux:control_of_xi_2} and the same reasoning as above.

    \textit{Step 4}. This is the final step of the proof. Recall that we have to show that 
    \begin{equation*}
        \sup_{x \in B_{\frac{\kappa}{2\widetilde{C}_1}}(a)} \sup_{X \in \widetilde{\X}} \Wass_2(\Law(X_t), \delta_a) < \frac{\kappa}{\widetilde{C}_1},
    \end{equation*}
    for all $t \in \left[0, \exp{\frac{2(H + \delta^\prime)}{\eps}} \right]$. We will treat two different cases separately; when $t \leq T_1$ and when $t \in \left(T_1, \exp{\frac{2(H + \delta^\prime)}{\eps}} \right]$.
    
    \textit{4.1}. Let $t \leq T_1$. Note that, since the starting point $|x - a| \leq \kappa/(2 \widetilde{C}_1)$, by Equation~\eqref{eq:aux:control_of_xi_no_theta_2}, we simply have
    \begin{equation*}
        \forall s \in [0, T_1]: \sup_{X \in \widetilde{\X}} \Wass_2(\Law(X_t), \delta_a) < \frac{\kappa}{C}.
    \end{equation*}

    \textit{4.2}. Now take some $t \in \left(T_1, \exp{\frac{2(H + \delta^\prime)}{\eps}} \right]$. For any $x \in B_{\frac{\kappa}{2 \widetilde{C}_1}}(a)$ and for any $X \in \widetilde{\X}(\eps, \kappa, x)$, we have
    \begin{equation*}
    \begin{aligned}
        &\E|X_{t} - a|^2 = \E \left[|X_{t} - a|^2 \left(\1_{\{X_{t - T_1} \in B_{\rho}(a)\}} + \1_{\{X_{t - T_1} \notin B_{\rho}(a)\}} \right) \right] \\
        & \leq \E \! \left[\1_{\{X_{t - T_1} \in B_{\rho}(a)\}} \E\big[|X_{t} - a|^2 |\mathcal{F}_{t - T_1}\big] \right] + \sqrt{M} \sqrt{\Prob\left(X_{t - T_1} \notin B_{\rho}(a)\right)},
    \end{aligned}
    \end{equation*}
    where we used the Cauchy–Schwarz inequality and, once again, the definition of $\widetilde{\X}$ to have $\E[|X_{t} - a|^4] \leq M$. By Lemma~\ref{lm:X_t_notin_B_rho}, there exists $\alpha(\eps)$ that tends to zero as $\eps \to 0$ such that
    \begin{equation*}
        \sqrt{\Prob\left(X_{t - T_1} \notin B_{\rho}(a)\right)} \leq \alpha(\eps).
    \end{equation*}
    Moreover, $\alpha$ is independent of the choice of $t$, $x$ or $X$ that we made (see Lemma~\ref{lm:X_t_notin_B_rho}). Therefore, we get the following:
    \begin{equation*}
    \begin{aligned}
        \E|X_{t} - a|^2 &\leq \E \left[\1_{\{X_{t - T_1} \in B_{\rho}(a)\}} \times \sup_{X^\prime \in \widetilde{\X}(\eps, \kappa, X_{t - T_1})} \E |X^\prime_{T_1} - a|^2 \right] + \sqrt{M} \alpha(\eps)\\        
        &\leq \sup_{x \in B_{\rho}(a)} \sup_{X^\prime \in \widetilde{\X}(\eps, \kappa, x)} \E |X^\prime_{T_1} - a|^2 + \sqrt{M} \alpha(\eps). 
    \end{aligned}
    \end{equation*}

    By Equation~\eqref{eq:aux:control_of_xi_no_theta}, the first item is bounded by $\left(\frac{m + 1}{2}\right)\frac{\kappa}{\widetilde{C}_1}$ and for the second we decrease $\eps$ to finally get the following uniform upper bound:
    \begin{equation}
    \label{eq:aux:|X_{kT_1} - a|^2_bound}
        \sup_{x \in B_{\frac{\kappa}{2 \widetilde{C}_1}}(a)} \sup_{X \in \widetilde{\X}(\eps, \kappa, x)} \E|X_{t} - a|^2 < \frac{\kappa}{\widetilde{C}_1},
    \end{equation}
    which proves the lemma.
    
\end{proof}

This proof concludes the main part of the paper. In the following, we present some auxiliary results related to the process $Z$.

\appendix

\section{Appendix}
\label{s:Appendix}

In this section, we will provide some facts about the linear diffusion $Z$ defined by Equation~\eqref{eq:def_process_Z}. The first lemma describes the classical large deviation principle for the process $Z$. Let us recall the notion of the large deviation principle (LDP). See \cite{DZ} for more details. Consider the following definition.

\begin{definition}
\label{def:LDP}
    Let $\mathcal{X}$ denote a Banach space and $\mathcal{B}$ denote the Borel $\sigma$-algebra defined on it. Let $\{\nu_\eps\}_{\eps > 0}$ be a family of measures on the measurable set $(\mathcal{X}, \mathcal{B})$. We say that the family $\{\nu_\eps\}_{\eps > 0}$ \textit{satisfies the LDP with a good rate function $I$} if
    \begin{enumerate}
    \item There exists a lower semicontinuous function $I: \mathcal{X} \to [0, \infty]$ such that the level sets $\{ x \in \mathcal{X}: I(x) \leq \alpha\}$ are compact subsets of $\mathcal{X}$,
        \item For any $\Gamma \in \mathcal{B}$, the family of measures $\{\nu_\eps\}_{\eps > 0}$ satisfies
        \begin{equation*}
            - \inf_{x \in \Int (\Gamma)} I(x) \leq \liminf_{\eps \to 0}\frac{\eps}{2} \log \nu_\eps(\Gamma) \leq \limsup_{\eps \to 0} \frac{\eps}{2} \log \nu_\eps(\Gamma) \leq - \inf_{x \in \Cl (\Gamma)} I(x),
        \end{equation*}
        where $\Int$ denotes interior and $\Cl$ the closure of sets.
    \end{enumerate}
\end{definition}

In the case of diffusion \eqref{eq:def_process_Z}, the classical result that we provide here gives us the behavior of trajectories of $Z$ as elements of $C[0, T]$. For any $f \in C[0, T]$, consider the following function:
\begin{equation}
\label{eq:rate_function_Z}
    I_T^x(f) := \int_0^T \left\langle \dot{f}_s - \drift(f_s), (\diff\diff^*)^{-1} (f_s) \big(\dot{f}_s - \drift(f_s) \big) \right\rangle \dd{s},
\end{equation}
where $x \in \R^d$ is the starting point of $Z$ provided by \eqref{eq:def_main_process}. Note that $I^x$ is a function that satisfies condition $1$ of the Definition~\ref{def:LDP} above (see Section~5.6 of \cite{DZ}).

\begin{lemma}[{\cite[Theorem 5.6.7]{DZ}}]
\label{lm:Z_LDP}
    Under Assumptions~\ref{assu:1}--\ref{assu:4}, for any $T > 0$, the family of measures $\{\nu_\eps\}_{\eps > 0}$ induced by $\{Z_s\}_{s \in [0, T]}$ on $C[0, T]$ satisfies the LDP with the good rate function \eqref{eq:rate_function_Z}.
\end{lemma}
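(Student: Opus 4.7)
The statement is a classical Freidlin--Wentzell sample-path LDP for small-noise diffusions, and the plan is to reduce it to Schilder's theorem for the rescaled Brownian motion $\sqrt{\eps} W$ and then transfer the LDP along the solution map of \eqref{eq:def_process_Z}. The guiding heuristic is that under Assumption~\ref{assu:4}, $\sigma$ is invertible on the relevant set, so for any absolutely continuous path $f$ with $f_0 = x$ the unique control that produces $f$ via $\dot f = b(f) + \sigma(f)\dot g$ is $\dot g = \sigma(f)^{-1}(\dot f - b(f))$, and plugging this into the Schilder rate $\tfrac12\int_0^T|\dot g_s|^2 ds$ reproduces exactly the expression $I^x_T(f)$ of \eqref{eq:rate_function_Z}. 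Identifying the correct rate function is therefore immediate; the work lies in establishing the two bounds.

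First, I would invoke Schilder's theorem, giving that $\{\sqrt{\eps} W\}_{\eps >0}$ satisfies the LDP on $C_0[0,T]$ with good rate function $J(g) = \tfrac12\int_0^T |\dot g_s|^2 ds$ (with $J(g) = +\infty$ unless $g$ is absolutely continuous with square-integrable derivative). Next, introduce the controlled map $F: g \mapsto \phi$ sending $g \in H^1_0$ to the solution of the ODE $\dot\phi_s = b(\phi_s) + \sigma(\phi_s)\dot g_s$, $\phi_0 = x$. On each level set $\{J \leq \alpha\}$, standard Gronwall estimates (using the Lipschitz property of $b,\sigma$) show $F$ is continuous for the uniform topology, so the Freidlin--Wentzell rate function $\inf\{J(g): F(g) = f\}$ coincides with $I^x_T(f)$. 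If $F$ extended continuously to $C_0[0,T]$, the contraction principle would finish the job directly.

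The main obstacle is precisely that $F$ does not extend continuously to all Brownian paths (the Itô map is only defined almost surely). I would overcome this in the usual two-sided way: for the \emph{upper bound}, exponential tightness of $\{Z^\eps\}$ in $C[0,T]$ via BDG and exponential martingale inequalities, together with an Euler/polygonal approximation $W^{(n)}$ of $W$ and the fact that $F(\sqrt\eps W^{(n)})$ is exponentially close to $Z^\eps$ as $n \to \infty$ uniformly in $\eps$, reduces the bound to the continuous setting where the contraction principle applies; for the \emph{lower bound}, given any $f$ with $I^x_T(f) < \infty$ and a neighborhood $U \ni f$, I would apply Girsanov's theorem to shift the drift by $\sigma(f)\dot g / \sqrt\eps$ with $g$ the Cameron--Martin element producing $f$, turning the event $\{Z^\eps \in U\}$ into an event of positive probability under the shifted measure and absorbing the exponential cost of the change of measure, which is precisely $\exp(-I^x_T(f)/\eps)$ up to lower-order terms.

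Finally, to check that the hypotheses of Theorem~5.6.7 in \cite{DZ} are met, I would verify that Assumptions~\ref{assu:1}--\ref{assu:4} (Lipschitz $b,\sigma$, uniform non-degeneracy of $\sigma\sigma^*$) imply the standing assumptions there. The really delicate step, if one were redoing the proof rather than quoting, is the exponential equivalence argument sketched above: it is what replaces the naive contraction principle and is where the non-degeneracy assumption on $\sigma$ enters critically, since the inverse $(\sigma\sigma^*)^{-1}$ must remain bounded along all paths that contribute to the rate function.
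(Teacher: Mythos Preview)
The paper does not prove this lemma at all: it is stated as a direct citation of \cite[Theorem~5.6.7]{DZ}, with no argument beyond the reference. Your sketch---Schilder's theorem for $\sqrt{\eps}W$, followed by a contraction-principle-type transfer along the It\^o solution map, with exponential equivalence via polygonal approximation for the upper bound and a Girsanov tilt for the lower bound---is indeed the standard route taken in \cite[Section~5.6]{DZ}, and the identification of $I^x_T$ via $\dot g = \sigma(f)^{-1}(\dot f - b(f))$ is correct.

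One small caution: you write that Assumption~\ref{assu:4} gives ``uniform non-degeneracy of $\sigma\sigma^*$'', but as stated it only asserts non-singularity pointwise on $\cDc$ together with uniform positive definiteness on $\partial\cDc\cup\{a\}$. To invoke Theorem~5.6.7 of \cite{DZ} cleanly you need uniform ellipticity and boundedness of $\sigma$ on the region where the paths under consideration live; in the paper this is handled implicitly (the LDP is only ever used for paths that stay in a compact neighborhood of $\overline{\cDc}$, where continuity of $\sigma$ and pointwise non-singularity do yield a uniform lower bound on $\sigma\sigma^*$, and $\|\sigma\|\le\lambda$ gives the upper bound). If you want a self-contained argument, you should either localize via a stopping time at the boundary of a slightly larger domain, or simply remark---as the paper effectively does---that one may assume $b,\sigma$ have been modified outside a neighborhood of $\overline{\cDc}$ to be globally Lipschitz with $\sigma\sigma^*$ uniformly elliptic, without affecting any of the exit-time statements.
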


More precisely, by the family of measures, we mean that if we take the process $Z$ solution of \eqref{eq:def_main_process} as a measurable function $Z: \Omega \to C[0, T]$ that for each $\omega \in \Omega$ returns the trajectory of $Z$ that corresponds to it, then $\nu_\eps = Z_{\#} \Prob$ for each respective $\eps > 0$. In particular, that gives us:
\begin{enumerate}
    \item for any closed $F \subset C[0, T]$:
    \begin{equation*}
        \limsup_{\eps \to 0} \frac{\eps}{2} \log \Prob (Z \in F) \leq - \inf_{f \in F} I^x_T(f)
    \end{equation*}
    \item for any open $G \subset C[0, T]$:
    \begin{equation*}
        \liminf_{\eps \to 0} \frac{\eps}{2} \log \Prob (Z \in G) \geq - \inf_{f \in G} I^x_T(f).
    \end{equation*}
\end{enumerate}

Observe that the rate function $I^x \geq 0$. Moreover, zero is obtained only for the unique solution of
\begin{equation}
\label{eq:def:phi_t}
    \phi^x_t = x + \int_0^t \drift(\phi^x_s)\dd{s}.
\end{equation}
For any other $f \neq \phi^x$, we have $I^x_T(f) > 0$ and thus for any closed $F \subset C[0, T]$ with $\phi^x \notin F$, we have $\ds \inf_{f \in F} I^x_T(f) > 0$. This observtion is important for the following inequality that is used throughout the paper. For any $\delta > 0$, we have
\begin{equation}
\label{eq:aux:Z-phi_geq_delta}
    \limsup_{\eps \to 0} \frac{\eps}{2} \log \Prob(\|Z - \phi^x\|_\infty \geq \delta) \leq - \inf_{f: \|f - \phi^x\|_{\infty} \geq \delta} I^x_T(f) < 0.
\end{equation}

Yet, in the case of the diffusion process $Z$, more interesting estimates could be provided. Consider the following.

\begin{lemma}[{\cite[Corollary 5.6.15]{DZ}}]
\label{lm:Z_LDP_compact_starting_set}
For any compact $K \subset \R^d$ we have
\begin{enumerate}
    \item for any closed $F \subset C[0, T]$:
    \begin{equation*}
        \limsup_{\eps \to 0} \frac{\eps}{2} \log \sup_{x \in  K} \Prob (Z \in F) \leq - \inf_{x \in K}\inf_{f \in F} I^x_T(f)
    \end{equation*}
    \item for any open $G \subset C[0, T]$:
    \begin{equation*}
        \liminf_{\eps \to 0} \frac{\eps}{2} \log \inf_{x \in K} \Prob (Z \in G) \geq - \sup_{x \in K} \inf_{f \in G} I^x_T(f).
    \end{equation*}
\end{enumerate}

\end{lemma}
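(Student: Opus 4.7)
The strategy is to reduce the uniform-in-$x$ LDP on $K$ to the pointwise LDP of Lemma~\ref{lm:Z_LDP} via two ingredients: (i) a finite cover of $K$ by small balls, and (ii) an exponential continuity estimate showing that, when the starting points $x$ and $y$ are close, the entire trajectories of $Z^x$ and $Z^y$ stay uniformly close with probability tending to $1$ exponentially fast as $\eps\to 0$.

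The first step is the continuity estimate. Applying the same Itô-based bound from \cite[Lemma~5.6.18]{DZ} that already powers Lemma~\ref{lm:|X_t-Z_t|>delta} to the difference $D_t=Z^x_t-Z^y_t$, whose drift and diffusion coefficients are bounded (using Lipschitz continuity of $\drift,\diff$) by $C(|x-y|^2+|D_t|^2)^{1/2}$, yields constants $C_1,C_2>0$ depending only on $T,\delta$ and the Lipschitz constants such that, for any $\eps\le 1$,
\begin{equation*}
\frac{\eps}{2}\log\Prob\!\left(\sup_{t\le T}|Z^x_t-Z^y_t|>\delta\right)\le C_1+\log\!\left(\frac{|x-y|^2}{|x-y|^2+C_2\delta^2}\right).
\end{equation*}
This right-hand side tends to $-\infty$ as $|x-y|\to 0$, and is uniform in $x,y$.

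For the upper bound, fix $\eta>0$ and a closed $F\subset C[0,T]$. By compactness, cover $K$ by finitely many balls $B_{\delta'}(x_1),\ldots,B_{\delta'}(x_N)$ with centers in $K$. For any $x\in K\cap B_{\delta'}(x_i)$,
\begin{equation*}
\Prob(Z^x\in F)\le\Prob(Z^{x_i}\in\Cl(F^{\delta}))+\Prob(\|Z^x-Z^{x_i}\|_\infty>\delta),
\end{equation*}
where $F^{\delta}$ denotes the open $\delta$-neighborhood of $F$. The pointwise LDP of Lemma~\ref{lm:Z_LDP} bounds the first term above, the continuity estimate bounds the second. Taking $\sup_{x\in K}$ reduces to the finite maximum over $i=1,\ldots,N$, and by choosing $\delta'\ll\delta$ the continuity term is made negligible on the exponential scale. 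Letting $\delta\to 0$ and appealing to lower semi-continuity of $I^{x_i}_T$ (together with continuity of the map $x\mapsto I^{x}_T(\cdot)$ that follows from uniform ellipticity in Assumption~\ref{assu:4}) gives the claimed upper bound with $\inf_{x\in K}\inf_{f\in F}I^x_T(f)$.

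For the lower bound, given open $G\subset C[0,T]$, the symmetric decomposition
\begin{equation*}
\Prob(Z^x\in G)\ge\Prob(Z^{x_i}\in G^{-\delta})-\Prob(\|Z^x-Z^{x_i}\|_\infty>\delta),
\end{equation*}
with $G^{-\delta}:=\{f\in C[0,T]:\mathrm{dist}(f,G^c)>\delta\}$ open, allows us to apply the pointwise LDP lower bound at each $x_i$ and absorb the continuity term into an arbitrarily small exponential correction. Taking $\inf_{x\in K}$, then $\delta'\to 0$ and $\delta\to 0$, gives $\liminf\frac{\eps}{2}\log\inf_{x\in K}\Prob(Z^x\in G)\ge-\sup_{x\in K}\inf_{f\in G}I^x_T(f)$, after using that $\inf_{f\in G^{-\delta}}I^{x_i}_T(f)\downarrow\inf_{f\in G}I^{x_i}_T(f)$ as $\delta\downarrow 0$.

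The main obstacle is the joint regularity of the rate function in its starting point and in the path: one must commute $\lim_{\delta\to 0}$ with $\sup_{x\in K}$ (resp.\ $\inf_{x\in K}$) when passing from $F^\delta$ to $F$ (resp.\ $G^{-\delta}$ to $G$). This is where the uniform ellipticity of $\diff\diff^*$ from Assumption~\ref{assu:4} and the goodness of the rate function are essential; once they are in place, the remainder of the argument is a standard exponential-tightness style reduction from the pointwise LDP to its compact-uniform counterpart, exactly as in \cite[Corollary~5.6.15]{DZ}.
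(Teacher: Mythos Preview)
The paper does not prove this lemma at all; it is stated in the Appendix as a direct citation of \cite[Corollary~5.6.15]{DZ} and used as a black box. So there is no ``paper's own proof'' to compare against---what you have written is a sketch of the argument behind the cited reference.

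Your strategy is the correct one and is essentially the approach of Dembo--Zeitouni: an exponential continuity estimate for $Z^x$ in the starting point (via \cite[Lemma~5.6.18]{DZ}), a finite cover of $K$, the pointwise LDP at the centers, and a passage to the limit in the neighborhood parameter. Two minor points are worth tightening. First, for the difference $D_t=Z^x_t-Z^y_t$ the drift and diffusion increments are bounded simply by $C|D_t|$ (pure Lipschitz); the dependence on $|x-y|$ enters only through $D_0=x-y$ in the conclusion of \cite[Lemma~5.6.18]{DZ}, not through the coefficient bound itself. Second, your appeal to ``uniform ellipticity in Assumption~\ref{assu:4}'' for continuity of $x\mapsto I^x_T$ is not quite right in this paper's setting: Assumption~\ref{assu:4} only requires uniform positive definiteness of $\diff\diff^*$ on $\partial\cDc\cup\{a\}$, not on all of $K$. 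Fortunately you do not actually need continuity in $x$ for either bound---since the centers $x_i$ lie in $K$, one has $\inf_{f\in F}I^{x_i}_T(f)\ge\inf_{x\in K}\inf_{f\in F}I^x_T(f)$ and $\inf_{f\in G}I^{x_i}_T(f)\le\sup_{x\in K}\inf_{f\in G}I^x_T(f)$ automatically; only the $\delta$-enlargement/shrinking needs to be removed, and that follows from goodness of $I^{x_i}_T$ and openness of $G$. These are refinements, not gaps.
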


Obtaining an equation in the form of \eqref{eq:aux:Z-phi_geq_delta} uniformly in $x$ is challenging. However, if an exponential rate of convergence to zero is not required, achieving this result becomes more straightforward. Consider the following lemma:

\begin{lemma}
\label{lm:Z_phi_control}
    For any $\delta > 0$ and for any positive time $T$, we have
    \begin{equation*}
        \sup_{x \in \R^d}\Prob(\sup_{ t \leq T} |Z_t - \phi_t^x| \geq \delta) \xrightarrow[\eps \to 0]{} 0.
    \end{equation*}
\end{lemma}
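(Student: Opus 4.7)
The plan is to bypass the large deviation machinery entirely and prove the statement by a standard Gronwall-plus-Doob argument, which is much more elementary and yields uniformity in the starting point essentially for free. The idea is to compare $Z$ directly to its noiseless version $\phi^x$ via the integral equations they both satisfy, and to absorb the drift contribution by Gronwall.

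First, fix $x \in \R^d$ and set $D_t := Z_t - \phi_t^x$ and $M_t := \sqrt{\eps}\int_0^t \diff(Z_s)\dd{W_s}$. Subtracting the defining equation \eqref{eq:def:phi_t} from \eqref{eq:def_process_Z} gives
\begin{equation*}
    D_t = \int_0^t \bigl(\drift(Z_s) - \drift(\phi_s^x)\bigr) \dd{s} + M_t.
\end{equation*}
Using the Lipschitz constant $L$ of $\drift$, this yields
\begin{equation*}
    |D_t| \leq L \int_0^t |D_s|\dd{s} + \sup_{s \leq t}|M_s|,
\end{equation*}
and Gronwall's inequality then gives the deterministic bound
\begin{equation*}
    \sup_{t \leq T}|Z_t - \phi_t^x| \leq e^{LT} \sup_{t \leq T}|M_t|,
\end{equation*}
valid pathwise and with a constant $e^{LT}$ that does not depend on $x$.

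The second step is to control $\sup_{t \leq T}|M_t|$ in probability uniformly in $x$. Since $\diff$ is bounded (by Assumption~\ref{assu:4}, possibly after localizing to $\cDc$ or using the boundedness of $\|\diff\|$ assumed in the paper), Doob's $L^2$ maximal inequality combined with Itô's isometry gives
\begin{equation*}
    \Prob\!\left(\sup_{t \leq T}|M_t| \geq \alpha\right) \leq \frac{4}{\alpha^2}\,\E|M_T|^2 \leq \frac{4\eps\,\lambda^2 d\,T}{\alpha^2},
\end{equation*}
which tends to $0$ as $\eps \to 0$, and the bound is uniform in $x$. Combining this with the previous step applied to $\alpha = \delta e^{-LT}$ concludes the proof.

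The only delicate point is the ``$\sup_{x \in \R^d}$'' in the statement, which is strictly valid only under uniform boundedness of $\|\diff\|$; if $\diff$ is merely Lipschitz with linear growth, the second moment estimate on $M$ picks up an $x$-dependence via $\E \sup_{s \leq T}|Z_s|^2$, and one would need to restrict the supremum to a compact set (which is all that is ever used in the applications, e.g.\ in the proof of Lemma~\ref{lm:tau_prime=tau}). Under the standing boundedness hypothesis on $\diff$, however, there is no obstacle, and the two-line argument above suffices.
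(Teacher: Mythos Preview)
Your proof is correct and follows essentially the same route as the paper: Lipschitz bound on the drift, Gronwall to reduce to controlling the stochastic integral, then a martingale maximal inequality together with the boundedness of $\diff$ from Assumption~\ref{assu:4}. The only cosmetic difference is that the paper uses the Burkholder--Davis--Gundy inequality in place of your Doob $L^2$ maximal inequality, but the resulting bound and the uniformity in $x$ are identical.
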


\begin{proof}
    We use the Lipschitz continuity of $b$ and simply get:
    \begin{equation*}
        |Z_t - \phi_t^x| \leq \sqrt{\eps} \left| \int_0^t \diff(Z_s)\dd{W_s}\right| + C \int_0^t |Z_s - \phi_s^x|\dd{s},  
    \end{equation*}
    where $C > 0$ is the Lipschitz constant and thus independent of the starting point $x$. Taking supremum, we get:
    \begin{equation*}
        \sup_{u \leq t} |Z_u - \phi_u^x| \leq \sqrt{\eps} \sup_{u \leq t} \left| \int_0^u \diff(Z_s)\dd{W_s}\right| + C \int_0^t \sup_{u \leq s} |Z_u - \phi_u^x| \dd{s}. 
    \end{equation*}
    By Gronwall's inequality, we get
    \begin{equation*}
        \sup_{u \leq T} |Z_u - \phi_u^x| \leq \sqrt{\eps}\, \e^{CT} \sup_{u \leq T} \left| \int_0^u \diff(Z_s)\dd{W_s}\right|.
    \end{equation*}
    All the inequalities above hold a.s. Therefore, we get
    \begin{equation*}
    \begin{aligned}
        \Prob(\sup_{t \leq T}|Z_t - \phi_t^x| \geq \delta) &\leq \Prob\left( \sqrt{\eps}\sup_{t \leq T} \left| \int_0^t \diff(Z_s)\dd{W_s}\right| \geq \delta \e^{-CT} \right) \\
        & \leq \sqrt{\eps}\, \E\left[\sup_{t \leq T} \left| \int_0^t \diff(Z_s)\dd{W_s}\right|\right] \e^{CT}/\delta.
    \end{aligned}
    \end{equation*}
     By the Burkholder–Davis–Gundy inequality, we get:
     \begin{equation*}
         \Prob\left(\sup_{t \leq T}|Z_t - \phi_t^x| \geq \delta \right) \leq \sqrt{\eps}\, \E\left[ \sqrt{\int_0^T \diff^2(Z_s)\dd{s}}\right] \e^{CT}/\delta,
     \end{equation*}
     which due to Assumption~\ref{assu:4} (i.e. $\|\sigma(x)\| \leq \lambda$) gives us
     \begin{equation*}
         \Prob\left(\sup_{t \leq T}|Z_t - \phi_t^x| \geq \delta \right) \leq \sqrt{\eps} \lambda \sqrt{T} \e^{CT}/\delta \xrightarrow[\eps \to 0]{} 0.
     \end{equation*}
     The upper bound for the above probability does not depend on the starting point $x \in \R^d$ thus proving the lemma.
\end{proof}

Since $\cDc$ is assumed to be a positively invariant domain and $a$ is the unique attractor within it, the result above implies, in particular, that the process $Z$ will also converge to $a$ within a deterministic time. Consider the following corollary.

\begin{corollary}
\label{lm:Z_conv}
    Under Assumptions~\ref{assu:1}--\ref{assu:4}, for any $\delta > 0$ small enough there exists a positive time $T_\delta$ such that
    \begin{equation*}
        \sup_{x \in \cDc} \Prob \left(Z_{T_\delta} \notin B_{\delta}(a) \right) \xrightarrow[\eps \to 0]{} 0.
    \end{equation*}
\end{corollary}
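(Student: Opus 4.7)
The plan is to reduce the stochastic statement to a purely deterministic claim about the flow $\phi^x$ via Lemma~\ref{lm:Z_phi_control}. Concretely, the proof will proceed in two steps: first a uniform-in-$x$ convergence statement for the deterministic trajectories, and then a small-noise deviation argument to transfer it to $Z$.

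For the deterministic step, I would first restrict $\delta$ to be small enough that the ball $B_\delta(a)$ is forward-invariant for the flow $\dot{\phi}_t = \drift(\phi_t)$. This is possible because Assumption~\ref{assu:3} implies that in some fixed ball $B_{r_0}(a)$ one has $\langle \drift(x), x - a\rangle \leq - L |x - a|^2$, so $B_r(a)$ is forward-invariant with $\phi^x_t \to a$ exponentially as soon as $r \leq r_0$; I take $\delta \leq r_0$. Then I would argue that $\sup_{x \in \overline{\cDc}} \inf\{t \geq 0 : \phi^x_t \in B_{\delta/2}(a)\} < \infty$. For this, by Assumptions~\ref{assu:1} and \ref{assu:2} every $\phi^x$ stays in $\overline{\cDc}$ and converges to $a$; in particular, for each $x \in \overline{\cDc}$ there is $T(x) < \infty$ with $\phi^x_{T(x)} \in B_{\delta/4}(a)$. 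By continuous dependence of the flow on the initial condition there is a neighborhood $U_x$ of $x$ on which $\phi^y_{T(x)} \in B_{\delta/2}(a)$ for every $y \in U_x$. Compactness of $\overline{\cDc}$ yields a finite subcover and hence a uniform time $T_\delta := \max_i T(x_i)$ with $\phi^x_{T_\delta} \in B_{\delta/2}(a)$ for all $x \in \overline{\cDc}$ (using forward-invariance of $B_\delta(a)$ to propagate the bound to all later times if needed).

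The stochastic step is then immediate. Apply Lemma~\ref{lm:Z_phi_control} with the time $T := T_\delta$ and the threshold $\delta/2$, obtaining
\begin{equation*}
    \sup_{x \in \R^d} \Prob\Bigl(\sup_{t \leq T_\delta} |Z_t - \phi^x_t| \geq \delta/2 \Bigr) \xrightarrow[\eps \to 0]{} 0.
\end{equation*}
On the complementary event, the triangle inequality gives $|Z_{T_\delta} - a| \leq |Z_{T_\delta} - \phi^x_{T_\delta}| + |\phi^x_{T_\delta} - a| < \delta/2 + \delta/2 = \delta$, so $Z_{T_\delta} \in B_\delta(a)$. Taking the supremum over $x \in \cDc$ finishes the argument.

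The only non-routine step is the uniform-in-$x$ estimate $\sup_{x \in \overline{\cDc}} T(x) < \infty$ for the deterministic flow, which is where Assumptions~\ref{assu:1}--\ref{assu:3} and the compactness of $\overline{\cDc}$ enter crucially; once this is in hand, everything else is a direct application of Lemma~\ref{lm:Z_phi_control} and the triangle inequality.
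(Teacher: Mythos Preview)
Your proposal is correct and follows essentially the same approach as the paper: both reduce the claim to Lemma~\ref{lm:Z_phi_control} via the triangle inequality, after first establishing a uniform-in-$x$ time $T_\delta$ at which the deterministic flow has entered $B_{\delta/2}(a)$. Your version is simply more explicit about the deterministic step---the paper asserts the existence of $T_\delta$ directly from compactness and positive invariance of $\overline{\cDc}$, whereas you spell out the finite-subcover argument and use forward-invariance of $B_\delta(a)$ to reconcile the times $T(x_i)$.
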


\begin{proof}
    Fix $\delta > 0$ such that $B_\delta(a) \subset \cDc$. Since $\overline{\cDc}$ is a compact that is positively invariant for the dynamical system \eqref{eq:def:phi_t}, there exists $T_\delta$ such that $\ds \sup_{x \in \cDc} |\phi_{T_\delta}^x - a| \leq \delta/2$.

    We use Lemma~\ref{lm:Z_phi_control} with $\delta/2$ instead of $\delta$ and $T = T_\delta$. We finally get
    \begin{equation*}
        \sup_{x \in \cDc} \Prob \left(Z_{T_\delta} \notin B_{\delta}(a) \right) \leq \sup_{x \in \cDc} \Prob \left(|Z_{T_\delta} - \phi_{T_\delta}^x| > \delta/2 \right) \leq \sup_{x \in \cDc} \Prob \left( \sup_{t \leq T_\delta}|Z_{t} - \phi_{t}^x| \geq \delta/2 \right),
    \end{equation*}
    which tends to zero as $\eps \to 0$.
\end{proof}

In addition to Corollary~\ref{lm:Z_conv}, we can also state that if the process $Z$ starts sufficiently close to the attractor $a$, it is expected not to deviate far from it over any given deterministic time. We provide a rigorous formulation of this statement in the following corollary.

\begin{corollary}
\label{lm:|Z_t-a|>beta_delta}
    Under Assumptions~\ref{assu:1}--\ref{assu:4}, for any $\delta > 0$ small enough, for any positive time $T$ and for any $\beta > 1$ we have 
    \begin{equation*}
        \sup_{x \in B_{\delta}(a)} \Prob \left(\sup_{t \leq T}|Z_t - a| > \beta \delta \right) \xrightarrow[\eps \to 0]{} 0.
    \end{equation*}
\end{corollary}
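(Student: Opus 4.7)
The plan is to reduce the estimate to Lemma~\ref{lm:Z_phi_control} by using the local contractivity of the noiseless flow around $a$. Recall that by Assumption~\ref{assu:3} the Jacobian $\nabla \drift(a)$ has only negative eigenvalues, so there exists a neighborhood of $a$ (and a constant $L>0$) on which $\langle \drift(y), y - a \rangle \leq -L|y-a|^2$; in particular, on this neighborhood the map $t \mapsto |\phi^x_t - a|$ is nonincreasing. I would therefore begin by fixing $\overline{\delta} > 0$ small enough so that this strong monotone attraction holds on $B_{\overline{\delta}}(a)$ and so that $B_{\overline{\delta}}(a) \subset \cDc$. In what follows I will only consider $\delta$ with $\beta \delta \leq \overline{\delta}$, which is the meaning of ``$\delta > 0$ small enough'' together with the condition $B_\delta(a) \subset \cDc$.

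With $\delta$ chosen this way, for every $x \in B_\delta(a)$ the flow $\phi^x$ defined in \eqref{eq:def:phi_t} satisfies $|\phi_t^x - a| \leq |x - a| \leq \delta$ for all $t \geq 0$. Indeed, a standard Gronwall-type argument applied to $\frac{\dd}{\dd t} |\phi^x_t - a|^2 = 2 \langle \drift(\phi^x_t), \phi^x_t - a\rangle \leq -2L|\phi^x_t - a|^2$ shows that $|\phi_t^x - a|$ decreases exponentially, and in particular never leaves $B_\delta(a)$, so the local attraction estimate remains in force for all $t \geq 0$.

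The rest of the argument is then a direct triangle inequality. For every $x \in B_\delta(a)$ and every $t \leq T$,
\begin{equation*}
    |Z_t - a| \leq |Z_t - \phi_t^x| + |\phi_t^x - a| \leq |Z_t - \phi_t^x| + \delta,
\end{equation*}
so the event $\{\sup_{t \leq T}|Z_t - a| > \beta \delta\}$ is contained in $\{\sup_{t \leq T}|Z_t - \phi_t^x| > (\beta - 1)\delta\}$. Applying Lemma~\ref{lm:Z_phi_control} with the threshold $(\beta-1)\delta > 0$ and the time $T$ gives
\begin{equation*}
    \sup_{x \in B_\delta(a)} \Prob\left(\sup_{t \leq T} |Z_t - a| > \beta \delta\right) \leq \sup_{x \in \R^d} \Prob\left(\sup_{t \leq T}|Z_t - \phi^x_t| \geq (\beta - 1)\delta \right) \xrightarrow[\eps \to 0]{} 0,
\end{equation*}
which is the claimed statement.

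There is no real obstacle here: the only delicate point is ensuring that the local attraction from Assumption~\ref{assu:3} really does keep $\phi^x$ inside $B_\delta(a)$ forever, which is why the smallness condition on $\delta$ (together with $\beta\delta \leq \overline{\delta}$) is needed. Once that is granted, Lemma~\ref{lm:Z_phi_control} supplies the uniform-in-$x$ convergence to zero for free.
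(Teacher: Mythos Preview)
Your proof is correct and follows essentially the same route as the paper's: use Assumption~\ref{assu:3} to guarantee that the deterministic flow $\phi^x$ starting in $B_\delta(a)$ never leaves $B_\delta(a)$, then apply the triangle inequality and Lemma~\ref{lm:Z_phi_control} with threshold $(\beta-1)\delta$. One small remark: your requirement $\beta\delta \leq \overline{\delta}$ is stronger than necessary and, as written, makes the smallness threshold on $\delta$ depend on $\beta$, which conflicts with the quantifier order in the statement; it is enough to take $\delta \leq \overline{\delta}$, since the monotone decrease of $|\phi^x_t - a|$ already keeps the flow inside $B_\delta(a)$.
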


\begin{proof}
    Let us fix $\delta > 0$ to be small enough so that $\langle \drift(x), x - a \rangle < 0$, meaning that the vector field $\drift$ points inside $B_{\delta}(a)$. This is possible due to Assumption~\ref{assu:3}. Then $B_{\delta}(a)$ is positively for the flow $\dot{\phi}^x_t = \drift(\varphi_t)$ and for any $x \in B_{\delta}(a)$ we have $\{\phi_t^x\}_{t \geq 0} \subset B_{\delta}(a)$. 
    
    Choose any positive $T$ and $\beta > 1$. Since $\phi$ always stays inside $B_\delta(a)$ and by Lemma~\ref{lm:Z_phi_control}, we finally get
    \begin{equation*}
        \sup_{x \in B_{\delta}(a)} \Prob \left(\sup_{t \leq T}|Z_t - a| > \beta \delta \right) \leq \sup_{x \in B_{\delta}(a)} \Prob \left(\sup_{t \leq T} |Z_t - \phi_t^x| > (\beta - 1)\delta \right) \xrightarrow[\eps \to 0]{} 0.
    \end{equation*}
\end{proof}

In the following, we present some known in the literature results about the process $Z$ derived using large deviations techniques. For further details on these methods, we refer to {\cite[Chapter 5]{DZ}}. 

Based on the estimates already established for the process $Z$, we know that the probability of exiting the domain $\cDc$ within any positive time $T$ tends to zero. The following result demonstrates that there exists a time $T$ such that this convergence is, in fact, no faster than exponential, with a rate determined by the height of the quasipotential $H$.

\begin{lemma}[{\cite[Lemma 5.7.18]{DZ}}]
\label{lm:tau(Z)<T_lower_bound}
    Under Assumptions~\ref{assu:1}--\ref{assu:4}, for any $\eta > 0$ and any $\rho > 0$ small enough, there exists a positive time $T$ such that
    \begin{equation*}
        \liminf_{\eps \to 0} \frac{\eps}{2} \log \inf_{x \in B_{\rho}(a)} \Prob (\tau(Z) \leq T) > -(H + \eta).
    \end{equation*}
\end{lemma}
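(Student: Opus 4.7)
The plan is to invoke the uniform-in-starting-point LDP lower bound of Lemma~\ref{lm:Z_LDP_compact_starting_set}: if $G \subset C[0, T]$ is open and every path in $G$ exits $\cDc$ before time $T$, then $\{Z \in G\} \subseteq \{\tau(Z) \leq T\}$ and
\[
\liminf_{\eps \to 0} \frac{\eps}{2} \log \inf_{x \in \overline{B_\rho(a)}} \Prob(Z \in G) \;\geq\; -\sup_{x \in \overline{B_\rho(a)}} \inf_{f \in G} I_T^x(f).
\]
So it will suffice to construct, for each $x \in B_\rho(a)$, some $f^x \in G$ with $I_T^x(f^x) < H + \eta$, uniformly in $x$.

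For this I first pick a near-optimal quasi-potential trajectory. By the definitions of $H$ in~\eqref{eq:def:H} and of $U$, there exist $y^* \in \partial \cDc$, $t^* > 0$, and an absolutely continuous $\psi : [0, t^*] \to \R^d$ with $\psi_0 = a$, $\psi_{t^*} = y^*$, and $I_{t^*}^a(\psi) < H + \eta/4$. I then extend $\psi$ past $\partial \cDc$ by a short linear segment connecting $y^*$ to a point $y^{**}$ with $\rho_0 := d(y^{**}, \bar{\cDc}) > 0$; using Assumption~\ref{assu:4} (uniform ellipticity of $\diff \diff^*$ on $\partial \cDc$) together with boundedness of $\drift$, the cost of this extension is dominated by a universal constant times $\rho_0^2$ over its duration plus a term linear in the duration, and both can be made $<\eta/4$ by an appropriate choice of $\rho_0$ and duration. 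Finally, for each $x \in B_\rho(a)$ I prepend a short linear segment from $x$ to $a$; since $\drift(a) = 0$, $\drift$ is Lipschitz, and $\diff\diff^*$ is positive definite near $a$, a direct computation shows its cost is $O(\rho)$, hence $<\eta/4$ for $\rho$ small enough. The concatenation $f^x$ is then an absolutely continuous path of total duration $T$ (independent of $x$), with $I_T^x(f^x) < H + 3\eta/4$ and $f_T^x = y^{**}$.

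To finish, set $G := \{f \in C[0, T] : \exists \, t \leq T, \; d(f_t, \bar{\cDc}) > \rho_0/2\}$, which is open in sup norm and contained in $\{\tau \leq T\}$. Each $f^x$ lies in $G$, so $\inf_{f \in G} I_T^x(f) \leq I_T^x(f^x) < H + 3\eta/4$ uniformly in $x \in \overline{B_\rho(a)}$. Applying Lemma~\ref{lm:Z_LDP_compact_starting_set} to the compact set $\overline{B_\rho(a)}$ yields
\[
\liminf_{\eps \to 0} \frac{\eps}{2} \log \inf_{x \in B_\rho(a)} \Prob(\tau(Z) \leq T) \;\geq\; -(H + 3\eta/4) \;>\; -(H + \eta).
\]

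The main obstacle is the quantitative bookkeeping of the two added segments---the prefix from $x$ to $a$ and the extension through $\partial \cDc$---so that the sum of their rate-function costs can be forced below any prescribed fraction of $\eta$ by suitable choices of $\rho$, $\rho_0$, and segment durations. This is where Assumptions~\ref{assu:3} and \ref{assu:4} enter crucially, through the local contractivity of $\drift$ near $a$ and the uniform ellipticity of $\diff\diff^*$ near both $a$ and $\partial \cDc$.
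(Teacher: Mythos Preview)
The paper does not give its own proof of this lemma; it is stated in the Appendix purely as a citation of \cite[Lemma~5.7.18]{DZ}. Your argument is correct and is precisely the standard construction used in that reference: pick a near-optimal quasipotential path from $a$ to $\partial\cDc$, prepend a short connector from $x$ to $a$, append a short segment pushing slightly beyond $\partial\cDc$, and then apply the uniform LDP lower bound (your Lemma~\ref{lm:Z_LDP_compact_starting_set}) over an open tube of exiting paths.

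One small remark on the bookkeeping: for the prefix segment from $x$ to $a$, if you parametrize linearly over a \emph{fixed} duration $s$ (so that $T$ is indeed independent of $x$), then $|\dot f_t| \leq \rho/s$ and $|\drift(f_t)| \leq L\rho$, giving a rate-function contribution of order $\rho^2/s + \rho^2 s$, which is $O(\rho^2)$ rather than the $O(\rho)$ you state --- either bound suffices. Likewise, the extension beyond $\partial\cDc$ requires the ellipticity of $\diff\diff^*$ not just on $\partial\cDc$ but on a small neighborhood of it; this is implicit in Assumption~\ref{assu:4} together with continuity of $\diff$, but is worth making explicit when you write the details out.
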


The next lemma shows that it is highly unlikely for $Z$ to remain inside the domain $\cDc$ for a long period without reaching either $\partial \cDc$ or a small neighborhood around the attractor $a$. Recall the definition of $\tau^\prime_\rho$: for any $\rho > 0$ such that $B_\rho(a) \subset \cDc$, we define $\tau^\prime_\rho(Z) := \inf\{t \geq 0 : Z_t \in B_\rho(a) \cup \partial \cDc\}$, which represents the first time the process either approaches $a$ or exits $\cDc$.

\begin{lemma}[{\cite[Lemma 5.7.19]{DZ}}]
\label{lm:tau_prim_rho(Z)>t}
    Under Assumptions~\ref{assu:1}--\ref{assu:4}, we have
    \begin{equation*}
        \lim_{t \to \infty} \limsup_{\eps \to 0} \frac{\eps}{2} \log \sup_{x \in \cDc} \Prob (\tau_\rho^\prime(Z) > t) = -\infty.
    \end{equation*}
\end{lemma}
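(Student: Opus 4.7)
The plan is to first establish, via the large deviation principle, an exponential bound on the probability that $Z$ stays inside $\cDc \setminus B_\rho(a)$ over a suitably chosen deterministic time window $[0, T]$, and then to iterate this one-step estimate using the strong Markov property of $Z$, producing super-exponential decay in $t = kT$ as $k \to \infty$.

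As preparation, by Assumption~\ref{assu:1} together with compactness of $\overline{\cDc}$ and continuous dependence of $\phi^x$ on $x$, fix a deterministic time $T > 0$ such that $\phi^x_T \in B_{\rho/4}(a)$ for every $x \in \overline{\cDc}$, where $\phi^x$ is the deterministic flow defined in~\eqref{eq:def:phi_t}. Introduce the closed subset $F := \{f \in C([0,T]; \R^d) : f(s) \in \overline{\cDc} \setminus B_{\rho}(a) \text{ for all } s \in [0,T]\}$. On the event $\{\tau_\rho^\prime(Z) > T\}$ the sample path $\{Z_s\}_{s \leq T}$ belongs to $F$. Since $\phi^x_T \in B_{\rho/4}(a) \subset B_\rho(a)$, the trajectory $\phi^x$ does \emph{not} belong to $F$; and since $\phi^x$ is the unique zero of $I^x_T$, the quantity $c(x) := \inf_{f \in F} I^x_T(f)$ is strictly positive for every $x \in \overline{\cDc}$. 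Joint lower semicontinuity of $(x, f) \mapsto I^x_T(f)$, together with the goodness of the rate function and the compactness of $\overline{\cDc}$, yields $c := \inf_{x \in \overline{\cDc}} c(x) > 0$. Applying Lemma~\ref{lm:Z_LDP_compact_starting_set} to $F$ gives $\limsup_{\eps \to 0} \frac{\eps}{2} \log \sup_{x \in \cDc} \Prob(\tau_\rho^\prime(Z) > T) \leq -c$.

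To iterate, observe that on $\{\tau_\rho^\prime(Z) > kT\}$ the process remains in $\cDc \setminus B_\rho(a)$ throughout $[0, kT]$, so in particular $Z_{jT} \in \cDc \setminus B_\rho(a)$ for each $j = 0, \ldots, k-1$. Applying the strong Markov property successively at the times $T, 2T, \ldots, (k-1)T$ yields $\sup_{x \in \cDc} \Prob_x(\tau_\rho^\prime(Z) > kT) \leq \bigl[\sup_{x \in \cDc} \Prob_x(\tau_\rho^\prime(Z) > T)\bigr]^k$. Combined with the single-step LDP bound, for any $\eta \in (0, c)$ and any $\eps$ small enough (depending on $\eta$), this probability is bounded by $\exp(-2k(c - \eta)/\eps)$; hence $\limsup_{\eps \to 0}\frac{\eps}{2}\log \sup_{x \in \cDc} \Prob(\tau_\rho^\prime(Z) > kT) \leq -k(c - \eta)$. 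Since $t \mapsto \Prob(\tau_\rho^\prime > t)$ is non-increasing, letting $k \to \infty$ (hence $t \to \infty$) delivers the $-\infty$ limit claimed.

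The main obstacle is the uniform positivity $c > 0$ over $x \in \overline{\cDc}$. Establishing it requires joint lower semicontinuity of $(x, f) \mapsto I^x_T(f)$ combined with the goodness (compact sublevel sets) of the rate function and the compactness of $\overline{\cDc}$. An alternative, more hands-on route is to observe that $\{\tau_\rho^\prime(Z) > T\} \subset \{\sup_{s \leq T}|Z_s - \phi^x_s| > 3\rho/4\}$ and to invoke a uniform-in-$x$ strengthening of the deviation estimate~\eqref{eq:aux:Z-phi_geq_delta}; but this uniformity still ultimately rests on the same semicontinuity argument for the rate function.
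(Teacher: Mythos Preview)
The paper does not supply its own proof of this lemma; it is quoted in the Appendix as a direct citation of Lemma~5.7.19 in Dembo--Zeitouni, used as a black-box ingredient. Your proposal reconstructs precisely the standard argument from that reference: fix $T$ so that the deterministic flow reaches $B_{\rho/4}(a)$ from anywhere in $\overline{\cDc}$, apply the uniform LDP upper bound (Lemma~\ref{lm:Z_LDP_compact_starting_set}) to the closed set $F$ of paths that avoid $B_\rho(a)\cup\partial\cDc$ on $[0,T]$, and then iterate via the Markov property. The argument is correct, and your identification of the uniform positivity $c=\inf_{x\in\overline{\cDc}}\inf_{f\in F} I^x_T(f)>0$ as the crux is apt; this is exactly where one needs goodness of the rate function \emph{uniformly} in $x$ over the compact $\overline{\cDc}$, which in Dembo--Zeitouni is guaranteed by the boundedness and uniform ellipticity encoded in Assumption~\ref{assu:4}.
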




The following result shows that the probability that the process $Z$ starts around the point $a$ and touches some closed subset of $\cDc$ in some finite time $T$ is exponentially small.

\begin{lemma}
\label{lm:Z_in_Phi}
    Let Assumptions~\ref{assu:1}--\ref{assu:4} hold. For any positive time $T$ and for any compact $C \subset \R^d$ such that $\ds \inf_{y \in C} Q(y) < \infty$ define $\Phi := \{f \in C[0, T]: f(0) \in S_{2\rho}(a) \text{ and } \exists t \leq T \text{ such that } f(t) \in C\}$. Then, we have 
    \begin{equation*}
        \limsup_{\eps \to 0} \frac{\eps}{2} \log \sup_{x \in S_{2\rho}(a)} \Prob(Z \in \Phi) \leq - \inf_{y \in C} Q(y) + \sup_{z \in S_{2\rho}(a)} Q(z).
    \end{equation*}
\end{lemma}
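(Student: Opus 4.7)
\textbf{Proof plan for Lemma~\ref{lm:Z_in_Phi}.} The strategy is to recognize $\Phi$ as a closed subset of $C[0,T]$ and apply the large-deviation upper bound of Lemma~\ref{lm:Z_LDP_compact_starting_set} over the compact starting set $K = S_{2\rho}(a)$, reducing the problem to a lower bound on the rate function $I_T^x$ restricted to $\Phi$.

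First I would verify that $\Phi$ is closed in $C[0,T]$. If $f_n \in \Phi$ converges uniformly to $f$, then $f_n(0) \to f(0)$ keeps $f(0) \in S_{2\rho}(a)$ (the sphere is closed), and if $f_n(t_n) \in C$ with $t_n \in [0,T]$, extracting a subsequence $t_n \to t^\star \in [0,T]$ and using uniform convergence plus compactness of $C$ gives $f(t^\star) \in C$. Since $S_{2\rho}(a)$ is compact, Lemma~\ref{lm:Z_LDP_compact_starting_set} yields
\begin{equation*}
    \limsup_{\eps \to 0} \frac{\eps}{2} \log \sup_{x \in S_{2\rho}(a)} \Prob(Z \in \Phi) \leq - \inf_{x \in S_{2\rho}(a)} \inf_{f \in \Phi} I_T^x(f).
\end{equation*}
Note that $I_T^x(f) = +\infty$ unless $f(0) = x$, so the inner infimum is effectively over paths $f$ starting at $x$ with $f(t^\star) \in C$ for some $t^\star \leq T$.

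Next I would extract a lower bound on the rate function for any admissible $(x,f)$. By nonnegativity of the integrand, $I_T^x(f) \geq I_{t^\star}^x(f|_{[0,t^\star]}) \geq U(x, f(t^\star), t^\star) \geq U(x, f(t^\star))$, and hence $\inf_{f \in \Phi, f(0) = x} I_T^x(f) \geq \inf_{y \in C} U(x,y)$. To bring in the quasi-potential $Q(y) = U(a,y)$, I would use the standard concatenation/triangle inequality: for any admissible paths from $a$ to $x$ in time $s_1$ and from $x$ to $y$ in time $s_2$, the concatenated path has cost equal to the sum of the two costs, yielding
\begin{equation*}
    U(a, y) \leq U(a, x) + U(x, y), \qquad \text{i.e.,} \qquad U(x, y) \geq Q(y) - Q(x).
\end{equation*}

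Combining these inequalities,
\begin{equation*}
    \inf_{x \in S_{2\rho}(a)} \inf_{f \in \Phi, f(0)=x} I_T^x(f) \geq \inf_{x \in S_{2\rho}(a)} \bigl( \inf_{y \in C} Q(y) - Q(x) \bigr) = \inf_{y \in C} Q(y) - \sup_{z \in S_{2\rho}(a)} Q(z),
\end{equation*}
and substituting into the LDP upper bound gives the claimed estimate. The only mildly delicate step is the concatenation/triangle inequality for $U$; beyond that, the argument is essentially a direct application of the Freidlin--Wentzell LDP to the closed set of trajectories that hit $C$ before time $T$.
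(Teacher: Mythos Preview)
Your proposal is correct and follows essentially the same route as the paper: apply the uniform LDP upper bound of Lemma~\ref{lm:Z_LDP_compact_starting_set} to the closed set $\Phi$, lower-bound the resulting rate-function infimum by $\inf_{y\in C} U(x,y)$ via the definition of $U$, and then invoke the triangle inequality $U(a,y)\le U(a,x)+U(x,y)$ to pass to $Q$. You supply more detail than the paper (explicitly checking closedness of $\Phi$ and the restriction $f(0)=x$), but the structure is identical.
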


\begin{proof}
    The proof consists in noting that, first, by Lemma~\ref{lm:Z_LDP_compact_starting_set}, we have:
    \begin{equation*}
        \limsup_{\eps \to 0} \frac{\eps}{2} \log \sup_{x \in S_{2\rho}(a)} \Prob(Z \in \Phi) \leq \inf_{y \in S_{2\rho}(a)} \inf_{f \in \Phi} I^y_T(f).
    \end{equation*}
    
    Second, we use the definition of the function $U$, and get 
    \begin{equation*}
        \inf_{y \in S_{2\rho(a)}}\inf_{f \in \Phi} I^y_T(f) \geq \inf_{z \in S_{2\rho}(a)} \inf_{y \in C} U(z, y).
    \end{equation*}
    By triangular inequality, we then have:
    \begin{equation*}
        \inf_{z \in S_{2\rho}(a)} \inf_{y \in C} U(z, y) \geq \inf_{y \in C} Q(y) - \sup_{z \in S_{2\rho}(a)} Q(z).
    \end{equation*}
\end{proof}

The next result shows that the process $Z$ does not move far from its initial position $x$ within a small time interval. Furthermore, for any specified level of proximity $r$ to the starting point $x$, and any desired exponential rate $c$, it is possible to choose a sufficiently small time $T(r, c)$ such that the probability of the event $\ds \{\sup_{t \leq T(r, c)} |Z_t - x| > r\}$ decreases to zero exponentially fast as $\eps \to 0$, with the rate $c$ specified in advance.

\begin{lemma}[{\cite[Lemma 5.7.23]{DZ}}]
\label{lm:|Z_t-x|>r}
    Under Assumptions~\ref{assu:1}--\ref{assu:4}, for any $r > 0$ and for any $c > 0$, there exists a positive time $T(r, c)$ such that
    \begin{equation*}
        \limsup_{\eps \to 0} \frac{\eps}{2} \log \sup_{x \in \cDc} \Prob \left(\sup_{t \leq T(r, c)} |Z_t - x| \geq r \right) < -c.
    \end{equation*}
\end{lemma}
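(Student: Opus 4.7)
The plan is to invoke the uniform large-deviation upper bound given by Lemma~\ref{lm:Z_LDP_compact_starting_set} applied to the closed set $F_T := \{f \in C[0,T]:\sup_{t\leq T}|f(t)-f(0)|\geq r\}$. That reduces the problem to proving the deterministic estimate
\begin{equation*}
\inf_{x\in\overline{\cDc}}\ \inf_{\substack{f\in F_T\\ f(0)=x}} I^x_T(f)\ >\ c,
\end{equation*}
for a suitably small time $T=T(r,c)$.

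To obtain this lower bound I would first use Assumption~\ref{assu:4}, which gives $(\diff\diff^*)^{-1}(y)\succeq \lambda^{-2}\Id$ on a fixed compact neighborhood $K^*$ of $\overline{\cDc}$, together with the Lipschitz boundedness of $\drift$ on $K^*$ (call the sup-norm bound $B$). Extending $\drift$ and $\diff$ continuously if needed, the rate function satisfies
\begin{equation*}
I^x_T(f)\ \geq\ \frac{1}{\lambda^2}\int_0^T |\dot f_s-\drift(f_s)|^2\,ds
\end{equation*}
whenever $f$ remains in $K^*$. For $f\in F_T$ with $f(0)=x\in\overline{\cDc}$, let $t^*:=\inf\{t\leq T: |f(t)-x|=r\}$. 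For $r$ small enough we have $\overline{B_r(x)}\subset K^*$, so the trajectory $f$ stays in $K^*$ on $[0,t^*]$. By Cauchy--Schwarz applied on $[0,t^*]$,
\begin{equation*}
r\ =\ |f(t^*)-x|\ \leq\ \int_0^{t^*}|\dot f_s|\,ds\ \leq\ \int_0^{t^*}|\dot f_s-\drift(f_s)|\,ds + BT,
\end{equation*}
so if $T$ is chosen so that $BT\leq r/2$, then $\int_0^{t^*}|\dot f_s-\drift(f_s)|\,ds\geq r/2$. A second application of Cauchy--Schwarz yields
\begin{equation*}
I^x_T(f)\ \geq\ \frac{1}{\lambda^2 T}\left(\int_0^{t^*}|\dot f_s-\drift(f_s)|\,ds\right)^{\!2}\ \geq\ \frac{r^2}{4\lambda^2 T}.
\end{equation*}

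I would then conclude by picking $T=T(r,c)$ small enough that simultaneously $BT\leq r/2$ and $r^2/(4\lambda^2 T)>c$. Both conditions are simultaneously satisfiable for any prescribed $r,c>0$, and the bound is uniform in $x\in\overline{\cDc}$ since neither $B$ nor $\lambda$ depends on $x$. The only mild obstacle is the localization issue, namely that the assumptions on $\drift$ and $\diff\diff^*$ are stated on $\overline{\cDc}$ while $f$ could in principle leave it; this is handled cleanly by stopping at $t^*$ so that $f$ remains in the compact $\overline{B_r(x)}\subset K^*$, where the bounds on $\drift$ and the ellipticity of $\diff\diff^*$ are available through a continuous extension (or equivalently through choosing $r$ small).
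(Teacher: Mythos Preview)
The paper does not supply its own proof of this lemma; it is quoted verbatim as \cite[Lemma~5.7.23]{DZ} and used as a black box. Your argument is essentially the standard proof found there: apply the uniform large-deviation upper bound (Lemma~\ref{lm:Z_LDP_compact_starting_set}) to the closed set $F_T=\{f:\sup_{t\le T}|f(t)-f(0)|\ge r\}$, and then lower-bound the rate function on $F_T$ by using the ellipticity $(\diff\diff^*)^{-1}\succeq\lambda^{-2}\Id$, the local boundedness of $\drift$, and Cauchy--Schwarz on $[0,t^*]$. The stopping at $t^*$ is exactly the right device to keep the path in a compact where the bounds on $\drift$ and $(\diff\diff^*)^{-1}$ are available, and your final choice of $T$ so that $BT\le r/2$ and $r^2/(4\lambda^2 T)>c$ is correct and uniform in $x\in\overline{\cDc}$.

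One cosmetic point: the lemma is stated for \emph{every} $r>0$, whereas your localization step invokes ``$r$ small enough'' so that $\overline{B_r(x)}\subset K^*$. This is harmless---for large $r$ the event $\{\sup_{t\le T}|Z_t-x|\ge r\}$ is contained in the corresponding event for any smaller $r'>0$, so one may take $T(r,c):=T(r',c)$ for a fixed small $r'$---but it would be cleaner to say so explicitly rather than leave it under the phrase ``or equivalently through choosing $r$ small.''
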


\begin{small}
\bibliographystyle{plain}
\def\cprime{$'$}

\end{small}

\end{document}